\newcommand{\Hom}{\operatorname{Hom}}
\newcommand{\Stab}{\operatorname{Stab}}
\newcommand{\Ker}{\operatorname{Ker}}
\newcommand{\ch}{\operatorname{ch}}
\newcommand{\td}{\operatorname{td}}
\newcommand{\Amp}{\operatorname{Amp}}
\newcommand{\BAmp}{\operatorname{BAmp}}
\newcommand{\BIG}{\operatorname{Big}}
\newcommand{\Mov}{\operatorname{Mov}}
\newcommand{\NS}{\operatorname{NS}}
\def\Z{\mathbb{Z}}
\def\Q{\mathbb{Q}}
\def\R{\mathbb{R}}
\def\C{\mathbb{C}}
\def\G{\mathbb{G}}
\newcommand{\cA}{{\mathcal  A}}
\newcommand{\cB}{{\mathcal  B}}
\newcommand{\cD}{{\mathcal  D}}
\newcommand{\cE}{{\mathcal  E}}
\newcommand{\cF}{{\mathcal  F}}
\newcommand{\cO}{{\mathcal O}}
\newcommand{\cP}{{\mathcal P}}
\newcommand{\cQ}{{\mathcal Q}}
\newcommand{\cT}{{\mathcal T}}
\newcommand{\cW}{{\mathcal W}}
\newcommand{\ra}{\rightarrow}
\date{Juin 2018}
\title{Conditions de stabilité et géométrie birationnelle}
\author{Fran\c cois CHARLES}
\address{Laboratoire de mathématiques d'Orsay\\ UMR 8628 du CNRS\\ Universit{é}
Paris-Sud,
B{â}timent 425
\\91405 Orsay cedex, France}
\email{francois.charles@math.u-psud.fr}
\begin{document}

\maketitle

\noindent{\bf INTRODUCTION}

\bigskip

L'objet principal de ce texte est la notion de condition de stabilité sur une catégorie triangulée $\cD$. Inspiré par des travaux de physique théorique, Bridgeland a introduit cette notion dans \cite{Bridgeland07}. Un des aspects particulièrement frappants de celle-ci est le fait que l'ensemble des conditions de stabilité sur $\cD$ --- supposées se factoriser par un groupe de type fini --- est une variété différentielle $\Stab(\cD)$ de dimension finie, étale sur un espace vectoriel complexe. L'espace $\cD$ est muni de structures de décomposition en chambres et murs associées à certains paramètres.

Un cas particulier important est celui où la catégorie $\cD$ est la catégorie dérivée $D^b(X)$ de la catégorie des faisceaux cohérents sur une variété $X$ projective lisse sur $\C$. Dans ce cas, la notion de condition de stabilité étend, au moins si $X$ est une surface, celle de condition de stabilité à la Gieseker, et la décomposition en chambres et murs étend de telles décompositions venant de la théorie des variations de quotient GIT.

Dans le cas où $X$ est une surface $\mathrm{K}3$, Bayer-Macr\`i dans \cite{BayerMacri14proj, BayerMacri14} relient la décomposition d'une composante de $\Stab(X)$ à la décomposition de certains cônes de diviseurs dans des espaces de modules $M$ de faisceaux sur $X$, qui sont des variétés symplectiques holomorphes. Ils montrent, en un sens précis, que le programme du modèle minimal sur un tel $M$ est induit par des phénomènes de wall-crossing sur $\Stab(X)$. Le but de ce travail est d'exposer ces développements.

\bigskip

Nous avons dû laisser de côté plusieurs applications importantes des conditions de stabilité à la géométrie algébrique. Nous ne mentionnerons donc pas le lien avec la symétrie miroir ou la géométrie énumérative. Nous n'évoquerons pas non plus les récents développements autour de la construction de conditions de stabilité en dimension $3$ et des inégalités de Bogomolov en dimension supérieure. Nous renvoyons aux beaux textes de survol \cite{Bridgeland09, Huybrechts14, MacriSchmidt17, Huizenga17} pour des compléments sur cet exposé.

\bigskip

Les deux premières sections de ce texte sont consacrées à un exposé des résultats de Bridgeland \cite{Bridgeland07}, revus par Kontsevich-Soibelman \cite{KontsevichSoibelman08} sur les conditions de stabilité sur une catégorie triangulée arbitraire. Nous adaptons des simplifications de Bayer \cite{Bayer16}. Dans la section 3, nous donnons des exemples de condition de stabilité sur les courbes et les surfaces, d'après notamment \cite{ArcaraBertram13}, et décrivons d'après \cite{Bridgeland08} une composante de l'espace des conditions de stabilité sur une surface $\mathrm{K}3$, simplifiant là encore la preuve en adaptant \cite{Bayer16}. Les sections 4 et 5 sont consacrées aux travaux de Bayer-Macr\`i \cite{BayerMacri14proj, BayerMacri14}.

\bigskip

\noindent{\bf Remerciements}. Je remercie chaleureusement Arend Bayer, Tom Bridgeland et Daniel Huybrechts pour leurs commentaires précieux sur une première version de ce texte, ainsi qu'Antoine Chambert-Loir pour ses nombreuses remarques, et Viviane Le Dret pour sa relecture attentive.

\section{Conditions de stabilité dans une catégorie triangulée}

Dans cette section, nous introduisons la définition, due à Bridgeland \cite{Bridgeland07}, de condition de stabilité sur une catégorie triangulée arbitraire. La question de la construction de conditions de stabilité sera abordée plus bas dans le texte, nous nous contentons ici de considérations abstraites.

\subsection{Stabilité sur une catégorie abélienne}\label{subsection:stab-ab}

Fixons dans ce qui suit une catégorie abélienne $\cA$. On notera comme d'habitude $K_0(\cA)$ le groupe de Grothendieck de $\cA$.

\begin{defi}\label{definition:stability-function}
Une fonction de stabilité sur $\cA$ est un morphisme de groupes additifs 
$$Z : K_0(\cA)\ra \C$$
tel que pour tout objet non nul $A$ de $\cA$, $Z(A)$ appartienne à la réunion du demi-plan de Poincaré et de la demi-droite réelle strictement négative $\R^*_{-}$.
\end{defi}

Étant donnée une telle fonction de stabilité $Z$, on notera souvent 
$$Z(X)=-d(X)+i\,r(X),$$
où $d(X)$ et $r(X)$ sont des nombres réels, appelés respectivement le \emph{degré} et le \emph{rang généralisés} de l'élément $X$ de $K_0(\cA)$. La \emph{pente généralisée} de $X$ est le réel $\frac{d(X)}{r(X)}$, défini si $r(X)$ est non nul.

La pente généralisée n'étant pas toujours bien définie, il est plus agréable de travailler avec la phase. Cela nous permet de définir les objets stables et semistables.

\begin{defi}
Soit $Z$ une fonction de stabilité sur $\cA$. Si $X$ est un élément non nul de $K_0(\cA)$, la phase de $X$, notée $\phi(X)$, est l'unique réel $\phi\in ]0, 1]$ tel que 
$$Z(X)\in \R^*_+e^{i\pi\phi}.$$
\end{defi}

\begin{defi}\label{definition:ss}
Soit $Z$ une fonction de stabilité sur $\cA$, et soit $A$ un objet non nul de $\cA$. On dit que $A$ est stable (resp. semistable) si pour tout sous-objet strict non nul~$B$ de~$A$, on a 
$$\phi(B)<\phi(A)$$
(resp. $\phi(B)\leq \phi(A)$).
\end{defi}

Si $0\ra B\ra A\ra C\ra 0$ est une suite exacte d'objets non nuls de $\cA$, on a $Z(A)=Z(B)+Z(C)$. Les trois nombres complexes $Z(A), Z(B)$ et $Z(C)$ étant tous trois des éléments de $\mathbb H\cup \R^*_{-}$, on a 
$$\phi(B)\leq \phi(A)\iff \phi(C)\geq \phi(A)$$
et
$$\phi(B)\geq \phi(A)\iff \phi(C)\leq \phi(A).$$
En particulier, un objet $A$ de $\cA$ est (semi)stable si et seulement si pour tout quotient strict  non nul $C$ de $A$, on a $\phi(C)>\phi(A)$ (resp. $\phi(C)\geq\phi(A)$). 

Les définitions ci-dessus sont habituelles si $\cA$ est la catégorie des faisceaux cohérents sur une courbe intègre projective et lisse sur un corps. Dans ce cas, les fonctions $d$ et $r$ qui à un faisceau cohérent associent respectivement son degré et son rang au point générique s'étendent en des morphismes de groupes $d, r : K_0(\cA)\ra \R$, d'où une fonction de stabilité $Z=-d+i\,r$. La définition \ref{definition:ss} étend aux faisceaux cohérents arbitraires la notion de (semi)stabilité exprimée d'habitude en fonction des pentes \cite{HuybrechtsLehn10}, de telle sorte que les faisceaux de torsion sont tous semistables de pente~$1$.

Les notions de pente, phase, stabilité, etc. définies ci-dessus dépendent de la fonction de stabilité $Z$ --- autant que possible, nous garderons implicite cette dépendance.

\bigskip

Nous pouvons maintenant définir la notion de précondition de stabilité.

\begin{defi}\label{definition:condition-stab-ab}
Soit $\cA$ une catégorie abélienne, et soit $Z : K_0(\cA)\ra\C$ une fonction de stabilité. On dit que le couple $(\cA, Z)$ est une précondition de stabilité abélienne si tout objet non nul $A$ de $\cA$ admet une filtration finie
$$0=A_0\subset A_1\subset\cdots\subset A_n=A$$
telle que
\begin{itemize}
\item pour tout $i\geq 0$, $A_{i+1}/A_i$ est semistable;
\item pour tout $i\geq 0$, $\phi(A_{i+1}/A_i)>\phi(A_{i+2}/A_{i+1})$.
\end{itemize}
\end{defi}

L'unicité de la filtration qui apparaît dans la définition ci-dessus est garantie par le lemme élémentaire suivant --- nous appellerons donc cette filtration la \emph{filtration de Harder-Narasimhan} de $A$.

\begin{lemm}\label{lemme:ss-nul}
Soit $Z$ une fonction de stabilité sur $\cA$, et soient $A$ et $B$ deux objets semistables de $\cA$. Si $\phi(A)>\phi(B)$, alors il n'existe pas de morphisme non nul de~$A$ vers~$B$.
\end{lemm}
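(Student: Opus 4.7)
Le plan est de proc�der par l'absurde. Supposons donn� un morphisme non nul $f : A \to B$, et consid�rons son image $I = \im(f)$ dans $\cA$. L'objet $I$ est non nul, et il s'inscrit dans deux suites exactes naturelles: $0 \to \ker(f) \to A \to I \to 0$, qui l'exhibe comme quotient non nul de $A$, et $0 \to I \to B \to \coker(f) \to 0$, qui l'exhibe comme sous-objet non nul de $B$. C'est cette double interpr�tation de $I$ qui fournira le contr�le sur la phase $\phi(I)$.

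Premi�rement, la semistabilit� de $B$, appliqu�e au sous-objet $I$, donne imm�diatement $\phi(I) \leq \phi(B)$ (avec �galit� dans le cas d�g�n�r� $I = B$). Sym�triquement, j'utiliserai la semistabilit� de $A$ exprim�e via sa formulation duale pour les quotients, rappel�e juste apr�s la d�finition \ref{definition:ss} comme cons�quence de l'additivit� de $Z$ sur les suites exactes courtes et de la position des valeurs de $Z$ dans $\mathbb H\cup \R^*_-$: pour tout quotient non nul $C$ de $A$ on a $\phi(C) \geq \phi(A)$. Appliqu�e � $C = I$, elle donne $\phi(A) \leq \phi(I)$.

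En combinant ces deux in�galit�s on obtient $\phi(A) \leq \phi(I) \leq \phi(B)$, en contradiction avec l'hypoth�se $\phi(A) > \phi(B)$. Il n'y a gu�re d'obstacle technique dans cet argument: tout repose sur l'observation que l'image $I$ sert d'interm�diaire entre $A$ et $B$, et que la semistabilit� se formule indiff�remment en termes de sous-objets ou de quotients. Le seul point � surveiller est le traitement des cas d�g�n�r�s $\ker(f) = 0$ ou $\coker(f) = 0$, pour lesquels on a une �galit� plut�t qu'une in�galit� stricte; les in�galit�s larges employ�es suffisent pour conclure dans tous les cas.
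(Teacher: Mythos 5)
Votre preuve est correcte et suit exactement la même démarche que celle du texte : on considère l'image $I$ du morphisme, on la voit comme quotient non nul de $A$ pour obtenir $\phi(A)\leq\phi(I)$ via la formulation duale de la semistabilité, puis comme sous-objet non nul de $B$ pour obtenir $\phi(I)\leq\phi(B)$, d'où la contradiction. Le soin apporté aux cas dégénérés est bienvenu mais, comme vous le notez, les inégalités larges suffisent.
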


\begin{proof}
Soient $A$ et $B$ comme dans l'énoncé, et soit $f$ un morphisme non nul de $A$ vers $B$. Soit $C$ l'image de $f$. Puisque $C$ est un quotient de $A$, qui est semistable, on a $\phi(A)\leq\phi(C)$. Comme $C$ est un sous-objet de $B$, on a de même $\phi(C)\leq \phi(B)$, d'où une contradiction.
\end{proof}

\subsection{Stabilité dans une catégorie triangulée}

Dans la suite de ce texte, on étudiera les familles de conditions de stabilité. Un des points clés de la théorie est le fait que, dans la plupart des cas, on souhaitera non seulement déformer la fonction de stabilité $Z$, mais aussi la catégorie abélienne $\cA$ à l'intérieur de la catégorie triangulée $D^b(\cA)$. Pour ce faire, il est plus commode de travailler avec des préconditions de stabilité sur une catégorie triangulée.

\bigskip

Soit $(\cA, Z)$ une condition de stabilité sur une catégorie abélienne, et considérons la catégorie triangulée $\cD=D^b(\cA)$. Si $\phi$ est un réel dans $\mathopen]0, 1]$, soit $\cP(\phi)$ la sous-catégorie pleine de $\cA$ dont les objets sont les objets de $\cA$ semistables de phase $\phi$. Si $\phi$ est un réel arbitraire de la forme
$$\phi=n+\phi_0,$$
avec $n\in\Z$ et $0<\phi_0\leq 1,$ soit $\cP(\phi)$ la sous-catégorie $\cP(\phi_0)[n]$ de $\cD$. Les $\cP(\phi)$ sont des catégories additives. Les propriétés qu'elles vérifient justifient la définition suivante.

\begin{defi}
Soit $\cD$ une catégorie triangulée. Un découpage de $\cD$ est la donnée, pour tout nombre réel $\phi$, d'une sous-catégorie additive pleine $\cP(\phi)$ de $\cD$, de telle façon que les conditions suivantes soient vérifiées : 
\begin{enumerate}
\item Pour tout $\phi\in\R$, $\cP(\phi+1)=\cP(\phi)[1]$;
\item Soient $\phi_1$ et $\phi_2$ deux réels, et soit $A_i$ un objet de $\cP(\phi_i)$, $i=1, 2$. Si $\phi_1>\phi_2$, alors $\Hom_{\cD}(A_1, A_2)=0$;
\item Soit $A$ un objet de $\cD$. Il existe des réels $\phi_1>\cdots>\phi_n$, des objets \mbox{$A_0=0, \ldots, A_n=A$} de $\cD$, et des triangles distingués
$$A_i\ra A_{i+1}\ra E_i\ra A_i[1],$$
où $E_i$ est un objet non nul de $\cP(\phi_i)$. 
\end{enumerate}
\end{defi}

\begin{rema}\label{remarque:strictly-full}
Si $A$ est un objet de $\cD$ isomorphe à un objet de $\cP(\phi)$, alors $A$ est un objet de $\cP(\phi)$ : les catégories $\cP(\phi)$ sont strictement pleines --- utiliser la condition 3 pour $A$ et le lemme \ref{lemme:orthogonalite} ci-dessous.
\end{rema}

Nous laissons au lecteur le soin de vérifier que si $(\cA, Z)$ est une précondition de stabilité sur une catégorie abélienne, alors la construction ci-dessus munit $D^b(\cA)$ d'un découpage naturel. La première propriété est satisfaite par construction, la seconde grâce au lemme \ref{lemme:ss-nul}, et la troisième grâce à l'existence des filtrations de Harder-Narasimhan.

Soit $\cP$ un découpage d'une catégorie triangulée $\cD$. Comme dans le cas des filtrations de Harder-Narasimhan, on vérifie que la condition 2 ci-dessus garantit l'unicité à isomorphisme près des triangles apparaissant dans 3. En particulier, les phases $\phi_0, \ldots, \phi_{n-1}$ associées à un objet $A$ de $\cD$ comme dans 3 sont bien définies. On notera 
$$\phi^+(A)=\phi_0, \phi^-(A)=\phi_{n-1}.$$
Remarquons que l'on a $\phi^+(A)=\phi^-(A)$ si et seulement si $A$ est un objet de $\cP(\phi)$ pour $\phi=\phi^+(A)=\phi^-(A).$

%

\bigskip

La notion de découpage est étroitement reliée à celle de t-structure, expliquons en quel sens. Si $I$ est un intervalle de $\R$, soit $\cP(I)$ la sous-catégorie pleine de $\cD$ dont les objets sont les $A$ tels que $\phi^-(A)$ et $\phi^+(A)$ sont dans $I$. Autrement dit, $\cP(I)$ est la plus petite sous-catégorie pleine de $\cD$ contenant les $\cP(\phi)$, $\phi\in I$ et stable par extensions. On note $\cP(\geq \phi)$ (resp. $\cP(\leq \phi), \cP(> \phi), \cP(< \phi)$) pour la catégorie $\cP([\phi, \infty\mathclose[)$ (resp. $\cP(\mathopen]-\infty, \phi]), \cP(\mathopen]\phi, \infty\mathclose[), \cP(\mathopen]-\infty, \phi\mathclose[)$).

La condition 2 de la définition a pour conséquence immédiate la propriété suivante d'orthogonalité pour les catégories $\cP(I)$. 

\begin{lemm}\label{lemme:orthogonalite}
Soient $A_1$ et $A_2$ deux objets de $\cD$. Si $\phi^-(A_1)>\phi^+(A_2)$, alors 
$$\Hom_\cD(A_1, A_2)=0.$$
En particulier, si $I$ et $J$ sont deux intervalles disjoints de $\R$, avec $i>j$  pour tous $i\in I, j\in J$, alors pour tous $A_1\in \cP(I), A_2\in \cP(J)$, 
$$\Hom_\cD(A_1, A_2)=0.$$
\end{lemm}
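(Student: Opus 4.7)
Le plan est de ramener d'abord la seconde assertion \`a la premi\`ere : si $A_1\in\cP(I)$ et $A_2\in\cP(J)$ sous les hypoth\`eses de l'\'enonc\'e, alors par d\'efinition de $\cP(I)$ et $\cP(J)$ on a $\phi^-(A_1)\in I$ et $\phi^+(A_2)\in J$, ce qui donne directement $\phi^-(A_1)>\phi^+(A_2)$ d'apr\`es l'hypoth\`ese faite sur $I$ et $J$.

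Pour la premi\`ere assertion, je proc\'ederais par r\'ecurrence sur la somme $n+m$ des longueurs des d\'ecompositions de type Harder-Narasimhan fournies par la condition 3 pour $A_1$ et $A_2$ respectivement. Le cas de base $n=m=1$ est imm\'ediat : on a alors $A_1\in\cP(\phi^-(A_1))$ et $A_2\in\cP(\phi^+(A_2))$, et l'hypoth\`ese $\phi^-(A_1)>\phi^+(A_2)$ jointe \`a la condition 2 de la d\'efinition d'un d\'ecoupage entra\^{\i}ne la nullit\'e de $\Hom_\cD(A_1,A_2)$.

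Pour le pas de r\'ecurrence, supposons par exemple $n>1$ (le cas $n=1$, $m>1$ se traite sym\'etriquement en isolant dans $A_2$ son premier gradu\'e et en appliquant $\Hom_\cD(A_1,-)$). Le dernier \'etage de la filtration de $A_1$ fournit un triangle distingu\'e
$$B\lra A_1\lra E\lra B[1],$$
o\`u $E\in\cP(\phi^-(A_1))$ et o\`u $B$ admet une d\'ecomposition de longueur $n-1$ dont la plus petite phase $\phi^-(B)$ est, par stricte d\'ecroissance des phases, strictement sup\'erieure \`a $\phi^-(A_1)$. En appliquant $\Hom_\cD(-,A_2)$ \`a ce triangle, on obtient la suite exacte
$$\Hom_\cD(E,A_2)\lra\Hom_\cD(A_1,A_2)\lra\Hom_\cD(B,A_2),$$
dont les deux termes extr\^emes s'annulent par hypoth\`ese de r\'ecurrence : on a en effet $\phi^-(E)=\phi^-(A_1)>\phi^+(A_2)$ et $\phi^-(B)>\phi^-(A_1)>\phi^+(A_2)$, et les sommes de longueurs correspondantes, $1+m$ et $(n-1)+m$, sont strictement inf\'erieures \`a $n+m$.

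Le seul point d\'elicat est de v\'erifier que les in\'egalit\'es sur $\phi^\pm$ se transmettent correctement aux sous-objets issus de la filtration de Harder-Narasimhan ; l'argument n'utilise par ailleurs que les conditions 2 et 3 d\'efinissant un d\'ecoupage, aucun ingr\'edient ext\'erieur n'\'etant n\'ecessaire.
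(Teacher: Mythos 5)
Votre démonstration est correcte et correspond exactement au dévissage standard que le texte sous-entend en qualifiant ce lemme de \og cons\'equence imm\'ediate \fg{} de la condition 2 : récurrence sur les longueurs des décompositions fournies par la condition 3, et suites exactes longues de $\Hom$ associées aux triangles distingués. Elle est même plus soigneuse que le texte, qui omet entièrement l'argument.
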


\begin{prop}\label{proposition:t-structure}
Soit $\cD$ une catégorie triangulée munie d'un découpage $\cP$. Si $\phi$ est un réel arbitraire, les sous-catégories de $\cD$
$$\cD^{\geq 0}:=\cP(\leq \phi+1), \cD^{\leq 0}:=\cP(> \phi)$$
définissent une t-structure bornée sur $\cD$, de c\oe{}ur $\cP(]\phi, \phi+1])$. 
\end{prop}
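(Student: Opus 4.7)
The plan is to verify the axioms of a bounded t-structure for the pair $(\cD^{\leq 0},\cD^{\geq 0})$ in the order: compatibility with the shift, $\Hom$-vanishing, existence of truncation triangles, identification of the heart, and finally boundedness.

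Shift compatibility follows directly from axiom~1 of a d\'ecoupage: $\cP(\phi)[1]=\cP(\phi+1)$ implies $\cP(>\phi)[1]=\cP(>\phi+1)\subset\cP(>\phi)$, i.e., $\cD^{\leq 0}[1]\subset\cD^{\leq 0}$, and similarly $\cP(\leq\phi+1)[-1]=\cP(\leq\phi)\subset\cP(\leq\phi+1)$, i.e., $\cD^{\geq 1}\subset\cD^{\geq 0}$. The orthogonality axiom
$$\Hom_\cD(\cD^{\leq 0},\cD^{\geq 1}) = \Hom_\cD(\cP(>\phi),\cP(\leq\phi)) = 0$$
is exactly Lemma~\ref{lemme:orthogonalite} applied to the disjoint intervals $\mathopen]\phi,\infty\mathclose[$ and $\mathopen]-\infty,\phi]$, which satisfy the required inequality between their elements.

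The main step is the construction of the truncation triangles. Given $X\in\cD$, axiom~3 of a d\'ecoupage furnishes a chain $0=A_0\to A_1\to\cdots\to A_n=X$ together with distinguished triangles $A_i\to A_{i+1}\to E_i\to A_i[1]$, with $E_i\in\cP(\phi_i)$ and $\phi_1>\cdots>\phi_n$. Let $k$ be the largest index such that $\phi_k>\phi$ (take $k=0$ if no such index exists, $k=n$ if $\phi_n>\phi$). An iterated application of the octahedral axiom to these triangles produces a distinguished triangle $A_k\to X\to Q\to A_k[1]$ where $Q$ is an iterated extension of $E_k,\ldots,E_{n-1}$. Since $\cP(>\phi)$ and $\cP(\leq\phi)$ are by definition stable under extensions, we deduce $A_k\in\cP(>\phi)=\cD^{\leq 0}$ and $Q\in\cP(\leq\phi)=\cD^{\geq 1}$, so this is the required truncation triangle.

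The heart is then computed directly from the definitions: $\cD^{\leq 0}\cap\cD^{\geq 0}=\cP(>\phi)\cap\cP(\leq\phi+1)=\cP(\mathopen]\phi,\phi+1])$. Boundedness follows from the fact that every $X\in\cD$ has only finitely many HN phases $\phi_1>\cdots>\phi_n$, so $X\in\cP([\phi_n,\phi_1])$, and the latter is contained in $\cD^{\geq -N}\cap\cD^{\leq N}$ as soon as $N$ is large enough that $\phi_n>\phi-N$ and $\phi_1\leq\phi+N+1$. The only slightly subtle point is the iterated octahedral construction of the triangle $A_k\to X\to Q$ with the correct HN factors of $Q$; once this is in place, everything else is formal.
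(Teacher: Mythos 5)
Your proof is correct and follows essentially the same route as the paper: shift compatibility from axiom~1, orthogonality from Lemma~\ref{lemme:orthogonalite}, the truncation triangle obtained by cutting the Harder--Narasimhan chain at the last phase exceeding $\phi$, and boundedness from the finiteness of the phases of any object. The only blemish is a harmless indexing slip (your phases are labelled $\phi_1>\cdots>\phi_n$ but the factors of $Q$ are written $E_k,\ldots,E_{n-1}$); the argument itself is the one the paper gives.
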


\begin{proof}
Si $n$ est un entier, on a 
$$\cD^{\geq n}=\cD^{\geq 0}[-n]=\cP(\leq \phi+1)[-n]=\cP(\leq \phi-n+1)$$
et
$$\cD^{\leq n}=\cD^{\leq 0}[-n]=\cP(> \phi)[-n]=\cP(>\phi-n).$$
En particulier, on a bien $\cD^{\leq -1}\subset \cD^{\leq 0}$. 

Le lemme ci-dessus garantit que pour $A_1\in \cD^{\leq 0}=\cP(> \phi)$, $A_2\in \cD^{\geq 1}=\cP(\leq \phi)$, on a $\Hom_\cD(A_1, A_2)=0.$

Enfin, si $A$ est un objet arbitraire de $\cD$, considérons des objets $A_0=0, \ldots, A_n=A$ de $\cD$, et des triangles distingués
$$A_i\ra A_{i+1}\ra E_i\ra A_i[1],$$
où $E_i$ est un objet de $\cP(\phi_i)$, avec $\phi_0>\cdots>\phi_{n-1}$. Soit $i$ le plus grand entier entre $0$ et $n$ tel que $\phi_j>\phi$ pour tout $j<i$. Alors $A_i$ est extension successive d'objets $E_0, \ldots, E_{i-1}$ de $\cP(>\phi)=\cD^{\leq 0}$, et l'on a un triangle distingué
$$A_i\ra A\ra E\ra A[1],$$
où $E$ est extension successive d'objets $E_i, \ldots, E_{n-1}$ de $\cP(\leq \phi)=\cD^{\geq 1}$.

On vient de montrer que $\cD^{\geq 0}=\cP(\leq \phi+1), \cD^{\leq 0}=\cP(> \phi)$ définit bien une t-structure sur $\cD$. On a de plus 
$$\bigcup_{m, n\in \Z} \cD^{\geq n}\cap \cD^{\leq m}=\bigcup_{m, n\in \Z} \cP(>\phi-m)\cap\cP(\leq \phi-n+1)=\bigcup_{m, n\geq 0} \cP(]\phi-m, \phi+n])=\cD,$$
ce qui montre que la t-structure est bornée.
\end{proof}

Un découpage d'une catégorie triangulée $\cD$ fournit donc une famille compatible de t-structures bornées sur $\cD$. Par convention, nous dirons que le \emph{c\oe{}ur} du découpage $\cP$ est la catégorie abélienne
$$\cP(\mathopen]0, 1]).$$
C'est bien le c\oe{}ur d'une t-structure bornée sur~$\cD$.

\bigskip

Bien entendu, le groupe des bijections croissantes $f : \R\ra\R$ telles que $f(\phi+1)=f(\phi)+1$ pour tout $\phi\in\R$ agit sur l'ensemble des découpages de~$\cD$ via $\cP(\phi)\mapsto \cP(f(\phi))$. On rigidifie la situation en introduisant l'analogue triangulé de la fonction de stabilité.

\begin{defi}\label{definition:condition-stabilite}
Soit $\cD$ une catégorie triangulée. Une précondition de stabilité sur~$\cD$ est un couple $\sigma=(\cP, Z)$, où $\cP$ est un découpage de $\cD$, et $Z$ est un morphisme de groupes additifs
$$Z : K_0(\cD)\ra\C$$
tel que pour tout réel $\phi$ et tout objet non nul $A$ de $\cP(\phi)$, on ait
$$Z(A)\in \R^*_+e^{i\pi\phi}.$$
On dit que la fonction $Z$ est la fonction charge centrale.
\end{defi}

Si $\phi$ est un réel quelconque, les objets non nuls de $\cP(\phi)$ sont les objets \emph{($\sigma$-)semistables de phase $\phi$}. Les objets simples sont les objets \emph{($\sigma$-)stables de phase $\phi$}.
\bigskip

La discussion précédente montre l'équivalence de nos deux définitions de précondition de stabilité, au sens suivant. Nous en laissons les détails au lecteur \cite[Proposition~5.3]{Bridgeland07}.

\begin{prop}
Soit $\cD$ une catégorie triangulée. La donnée d'une précondition de stabilité sur $\cD$ est équivalente à la donnée d'une catégorie abélienne $\cA$, c\oe{}ur d'une t-structure bornée sur $\cD$, et d'une fonction de stabilité $K_0(\cA)\ra \C$ telles que $(\cA, Z)$ est une précondition de stabilité abélienne.
\end{prop}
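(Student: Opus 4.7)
The plan is to give explicit constructions in both directions and verify them routinely; I sketch the main steps.

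From an abelian pre-stability condition $(\cA,Z)$ with $\cA$ the heart of a bounded $t$-structure on $\cD$, I define a slicing $\cP$ by letting $\cP(\phi)$, for $\phi\in\mathopen]0,1]$, be the strictly full subcategory of $\cA$-semistable objects of phase $\phi$, and extend to all $\phi\in\R$ by the rule $\cP(\phi+1)=\cP(\phi)[1]$. Axiom~1 of a slicing is immediate. Axiom~2, after shifting, reduces to two cases: when the two objects lie in a common shift of $\cA$, Lemma~\ref{lemme:ss-nul} gives the vanishing; when they lie in different shifts $\cA[k_1]$ and $\cA[k_2]$ with $k_1>k_2$, the vanishing $\Hom_\cD(\cA[k_1],\cA[k_2])=0$ is a direct consequence of the $t$-structure axioms. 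Axiom~3 follows by taking the bounded cohomological decomposition of $A\in\cD$ and refining each cohomology object via its abelian Harder-Narasimhan filtration, then reassembling the pieces into distinguished triangles with appropriate shifts. The charge $Z$ is lifted to $K_0(\cD)$ through the canonical isomorphism $K_0(\cA)\simeq K_0(\cD)$ induced by the bounded $t$-structure.

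Conversely, given $(\cP,Z)$ on $\cD$, set $\cA:=\cP(\mathopen]0,1])$, which by Proposition~\ref{proposition:t-structure} is the heart of a bounded $t$-structure; $Z$ restricts to a group homomorphism $K_0(\cA)\simeq K_0(\cD)\to\C$. For $A\in\cA$ non-zero, the pieces $E_i\in\cP(\phi_i)$ of the axiom-3 decomposition satisfy $\phi_i\in\mathopen]0,1]$ (the subcategory $\cP(\mathopen]0,1])$ being equivalently the objects whose HN phases lie in that interval), so the $E_i$ belong to $\cA$, the triangles $A_i\to A_{i+1}\to E_i$ become short exact sequences in $\cA$, and $Z(A)=\sum_i Z(E_i)$ lies in $\mathbb{H}\cup\R^*_{-}$. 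It remains to verify that each $E\in\cP(\phi)\cap\cA$ is $\cA$-semistable of phase $\phi$: given a sub-object $B\hookrightarrow E$ in $\cA$, apply the previous step to $B$ to obtain its triangulated HN filtration, with top piece $B_1\in\cP(\phi^+(B))$. Since the composition $B_1\to B\to E$ is non-zero and $E\in\cP(\phi)$, Lemma~\ref{lemme:orthogonalite} yields $\phi^+(B)\leq\phi$, and the argument of $Z(B)$, a sum of vectors with arguments in $[\phi^-(B),\phi^+(B)]$, is therefore at most $\phi$, whence $\phi(B)\leq\phi(E)$.

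The crux is precisely this identification of $\cP$-semistability with intrinsic $\cA$-semistability with respect to $Z\vert_{K_0(\cA)}$; once it is in hand, the rest is bookkeeping with $t$-structure truncations, shifts, and the uniqueness of HN filtrations (itself a consequence of the orthogonality lemma), and the two constructions are seen directly to be mutually inverse.
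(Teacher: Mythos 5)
Votre argument est correct et suit exactement la voie que le texte esquisse (et dont il laisse les détails au lecteur en renvoyant à la Proposition 5.3 de Bridgeland) : sens direct par la construction des $\cP(\phi)$ avec l'axiome 2 via le lemme~\ref{lemme:ss-nul} et les annulations de la t-structure, l'axiome 3 via les filtrations de Harder-Narasimhan raffinant la décomposition cohomologique, et sens réciproque via la proposition~\ref{proposition:t-structure} et le lemme~\ref{lemme:orthogonalite}. Le point clé que vous identifiez --- la coïncidence de la $\cP$-semistabilité et de la semistabilité abélienne dans le c\oe{}ur --- est bien le seul endroit où il y a quelque chose à vérifier, et votre vérification est juste.
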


Dans ce qui suit, nous utiliserons sans la mentionner l'équivalence des définitions précédentes. Une précondition de stabilité $\sigma$ sur $\cD$ sera donc considérée comme un couple $(\cA, Z)$ ou $(\cP, Z)$ selon la situation. Les notations rendront clair l'objet considéré.

\begin{prop}\label{proposition:cP-ab}
Soit $(\cP, Z)$ une précondition de stabilité sur une catégorie triangulée $\cD$, et soit $\phi\in\R$. La catégorie $\cP(\phi)$ est abélienne.
\end{prop}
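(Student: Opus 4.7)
Le plan est de se ramener, par d�calage, au cas o� $\phi\in\mathopen]0, 1]$, de sorte que $\cP(\phi)$ est une sous-cat�gorie pleine de la cat�gorie ab�lienne $\cA:=\cP(\mathopen]0, 1])$, c\oe{}ur d'une t-structure born�e sur $\cD$ par la proposition \ref{proposition:t-structure}. Il suffira alors de montrer que $\cP(\phi)$ est stable par noyaux et conoyaux calcul�s dans $\cA$: la structure ab�lienne de $\cA$ se transportera ainsi � $\cP(\phi)$.

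Pour cela, on fixera un morphisme $f : A\ra B$ dans $\cP(\phi)$ et l'on formera dans $\cA$ le noyau $K$, l'image $I$ et le conoyau $C$ de $f$. L'�tape centrale est d'�tablir que $\phi(I)=\phi$ lorsque $I\neq 0$: puisque $I$ est un quotient de $A$, la semistabilit� de $A$ donne $\phi(I)\geq\phi$, et puisque $I$ est un sous-objet de $B$, celle de $B$ donne $\phi(I)\leq\phi$. L'additivit� de $Z$ appliqu�e aux suites exactes $0\ra K\ra A\ra I\ra 0$ et $0\ra I\ra B\ra C\ra 0$ montre ensuite que $Z(K)$ et $Z(C)$ appartiennent � la droite $\R e^{i\pi\phi}$; combin�e avec la contrainte $Z(\cdot)\in\H\cup\R^*_{-}$ pour les objets non nuls de $\cA$, cela force $\phi(K)=\phi(C)=\phi$ d�s que ces objets sont non nuls.

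Il restera � v�rifier que $K$ et $C$ sont effectivement semistables dans $\cA$, et pas seulement de phase $\phi$. Pour $K$: tout sous-objet strict non nul $K'$ de $K$ est encore un sous-objet strict non nul de $A$ via l'inclusion $K\subseteq A$, d'o� $\phi(K')\leq\phi=\phi(K)$ par semistabilit� de $A$. Dualement, tout quotient strict non nul de $C$ reste un quotient strict non nul de $B$, et la semistabilit� de $B$ donne celle de $C$. Une fois ces points acquis, $\cP(\phi)$ est une sous-cat�gorie pleine de $\cA$ stable par noyaux, images et conoyaux, donc elle-m�me ab�lienne, ce qui conclut. Le principal obstacle sera le jeu de bascule de phases imposant $\phi(I)=\phi$; le passage de l'�galit� des phases � la semistabilit� effective de $K$ et $C$ s'ensuivra par manipulation formelle des sous-objets et quotients.
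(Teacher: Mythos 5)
Your argument is correct and follows the paper's strategy: reduce to $\phi\in\mathopen]0,1]$, work inside the heart $\cA=\cP(\mathopen]0,1])$, and prove closure under the relevant (co)kernels, the pivot being the squeeze $\phi\leq\phi(I)\leq\phi$ obtained from $I$ being a quotient of the semistable $A$ and a subobject of the semistable $B$. You diverge from the paper only in the final verification. For the kernel $K$, the paper writes $Z(K)$ as the sum of the charges of its Harder--Narasimhan factors, whose phases lie in $\mathopen]0,\phi]$ because $K$ is a subobject of $A$ inside $\cA$, and concludes by a convexity argument that all these phases equal $\phi$; you instead observe that, once $\phi(K)=\phi$ is known from $Z(K)=Z(A)-Z(I)$, every strict nonzero subobject of $K$ is a strict nonzero subobject of $A$, hence has phase at most $\phi(A)=\phi=\phi(K)$, which is exactly the definition of semistability --- and dually for the cokernel via quotients of $B$. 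Your route is slightly more elementary (no Harder--Narasimhan filtration is needed at this stage), and replacing the paper's pair (kernel, image) by (kernel, cokernel) is harmless, since the image is the kernel of the cokernel; both versions exhibit $\cP(\phi)$ as a full additive subcategory of $\cA$ stable under kernels and cokernels, hence abelian.
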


\begin{proof}
Quitte à décaler, on peut supposer $\phi\in]0, 1]$. En particulier, $\cP(\phi)$ est une sous-catégorie pleine du c\oe{}ur $\cA=\cP(]0, 1])$ de $\cP$. Pour montrer le résultat, il suffit de montrer que si $f : A\ra B$ est un morphisme entre objets de $\cP(\phi)$, l'image et le noyau de $f$ dans $\cA$ sont des objets de $\cP(\phi)$. Cela garantit que ce sont aussi l'image et le noyau de $f$ dans $\cP(\phi)$.

Soit $I$ l'image de $f$. Comme $I$ est un quotient de $A$ qui est semistable de phase $\phi$, et comme $I$ admet par définition un quotient semistable de phase $\phi^-(I)$, on a $\phi=\phi(A)\leq \phi^-(I)$. De même, on a $\phi^+(I)\leq \phi(B)=\phi.$ Ainsi, $\phi^-(I)=\phi^+(I)=\phi$, ce qui montre que $I$ est un objet de $\cP(\phi)$. 

Soit $K$ le noyau de $f$. On a une suite exacte
$$0\ra K\ra A\ra I\ra 0.$$
D'après ce qui précède, $I$ est un objet de $\cP(\phi)$. De $Z(A)=Z(K)+Z(I)$ avec $Z(A), Z(I)\in \R^*_+e^{i\pi\phi}$, on tire $Z(K)\in \R^*_+e^{i\pi\phi}$. Par ailleurs, comme précédemment, on a $\phi^+(K)\leq \phi$, donc les phases $\phi_i$ de $K$ sont toutes inférieures ou égales à $\phi$. Par définition, $K$ est un objet de $\cA$, donc ces phases sont toutes strictement positives. La filtration de Harder-Narasimhan de $K$ permet ainsi d'écrire $Z(K)$ comme somme d'éléments de $\R^*_+e^{i\pi\phi_i}$ avec $0<\phi_i\leq \phi$. Cela n'est possible que si $K$ n'a qu'une seule phase, égale à~$\phi$, soit si $K$ est semistable de phase $\phi$.
\end{proof}


\subsection{Actions de groupes}\label{subsection:action}

Soit $\widetilde{GL}_2^+(\R)$ le revêtement universel du groupe $GL_2^+(\R)$, composante neutre de $GL_2(\R)$. Ce groupe s'identifie au groupe des couples $(a, g)$, où $g\in GL_2^+(\R)$ et $a$ est une fonction continue, nécessairement croissante, de $\R$ dans $\R$, telle que pour $\phi\in \R$, $a(\phi+1)=a(\phi)+1$ et 
$$g(\cos(\pi\phi), \sin(\pi\phi))\in \R^*_+(\cos(\pi a(\phi)), \sin(\pi a(\phi))).$$

Le groupe $\widetilde{GL}_2^+(\R)$ agit à droite sur l'ensemble des préconditions de stabilité sur une catégorie triangulée $\cD$ par la formule 
$$(\cP, Z).(a, g)=(\cP\circ a, g^{-1}\circ Z),$$
où $g$ agit sur $\C$ via l'identification naturelle de l'espace vectoriel $\C$ à $\R^2$ et où $\cP\circ a$ est le découpage de $\cD$ tel que pour tout $\phi\in\R$,
$$(\cP\circ a)(\phi)=\cP(a(\phi)).$$
Remarquons que l'action de $\widetilde{GL}_2^+(\R)$ préserve l'ensemble des objets semistables de $\cD$, ainsi que le noyau de la charge centrale.

\bigskip

Soit $\mathrm{Aut}(\cD)$ le groupe des autoéquivalences exactes de $\cD$. Le groupe $\mathrm{Aut}(\cD)$ agit à gauche sur l'ensemble des préconditions de stabilité sur $\cD$ par la formule
$$T(\cP, Z)=(T.\cP, Z\circ T^{-1}).$$
Cette action commute manifestement à l'action de $\widetilde{GL}_2^+(\R)$.

\subsection{La condition de support}

Soit $\cD$ une catégorie triangulée. Dans la suite de ce texte, quand il sera question de (pré)conditions de stabilité sur $\cD$, on aura toujours fixé a priori un groupe abélien libre de type fini $\Lambda$ et un morphisme de groupes abéliens 
$$v : K_0(\cD)\ra \Lambda.$$
On imposera toujours aux charges centrales $Z : K_0(\cD)\ra \C$ de se factoriser par $v$, via un morphisme lui aussi noté $Z : \Lambda\ra \C$. On gardera souvent $v$ et $\Lambda$ implicites.

Dans ce contexte, le groupe $\mathrm{Aut}_\Lambda(\cD)$ constitué des couples $(T, \phi)$ formés d'une autoéquivalence exacte $T$ de $\cD$ et d'un automorphisme $\phi$ de $\Lambda$ tels que $v\circ T=\phi\circ v$ agit comme plus haut sur l'ensemble des préconditions de stabilité sur $\cD$ qui se factorisent par $v$.

\bigskip

La notion de condition de stabilité s'obtient en imposant une propriété de finitude aux préconditions de stabilité. Nous utiliserons l'approche de Kontsevich-Soibelman \cite{KontsevichSoibelman08}, plus restrictive que celle originale de Bridgeland mais suffisante dans le cas qui nous occupe. Elle repose sur la remarque suivante.

\begin{prop}\label{proposition:equivalent-support}
Soit $\cD$ une catégorie triangulée, et soit $\sigma=(\cP, Z)$ une précondition de stabilité sur $\cD$. Supposons que $Z$ se factorise par $v : K_0(\cD)\ra \Lambda$, où $\Lambda$ est un groupe abélien libre de type fini. Les conditions suivantes sont équivalentes : 
\begin{enumerate}
\item Pour toute norme $\Vert .\Vert $ sur $\Lambda_\R$, il existe une constante $C>0$ telle que pour tout objet $\sigma$-semistable $A$ de $\cD$, on ait 
$$|Z(A)|\geq C\Vert v(A)\Vert \ ;$$
\item Il existe une forme quadratique sur $\Lambda_\R$, définie négative sur le noyau de $Z$, telle que si $A$ est un objet $\sigma$-semistable de $\cD$, alors $Q(v(A))\geq 0$.
\end{enumerate}
\end{prop}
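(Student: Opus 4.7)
The plan is to prove each implication by a direct argument on $\Lambda_\R$, exploiting its finite-dimensionality. The key observation is that if $\Vert\cdot\Vert$ is chosen to be a \emph{Euclidean} norm, then $\Vert\cdot\Vert^2$ is itself a quadratic form on $\Lambda_\R$, so that $|Z|^2$ and $\Vert\cdot\Vert^2$ can be combined additively with the quadratic form $Q$ sought in (2).

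For $(2)\Rightarrow(1)$, I would fix any norm $\Vert\cdot\Vert$ on $\Lambda_\R$ and consider the subset
$$S=\{x\in\Lambda_\R : \Vert x\Vert=1,\ Q(x)\geq 0\},$$
which is compact since $\Lambda_\R$ is finite-dimensional. A point $x\in S$ with $Z(x)=0$ would lie in $\ker Z$ with $Q(x)\geq 0$, contradicting the fact that $Q$ is negative definite on $\ker Z$. Hence $|Z|$ is a continuous, strictly positive function on the compact set $S$, bounded below by some $C>0$. By the homogeneities $Z(\lambda x)=\lambda Z(x)$, $Q(\lambda x)=\lambda^2 Q(x)$ and $\Vert\lambda x\Vert=|\lambda|\Vert x\Vert$ for $\lambda\in\R$, it follows that whenever $A$ is $\sigma$-semistable with $v(A)\neq 0$, the vector $v(A)/\Vert v(A)\Vert$ lies in $S$, whence $|Z(A)|\geq C\Vert v(A)\Vert$. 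The case $v(A)=0$ is trivial, as then $Z(A)=0$.

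For $(1)\Rightarrow(2)$, I would choose a Euclidean norm $\Vert\cdot\Vert$ on $\Lambda_\R$ and let $C>0$ be the constant provided by (1) for this norm. Define
$$Q(x):=|Z(x)|^2-C^2\Vert x\Vert^2.$$
Since $Z:\Lambda_\R\to\C$ is $\R$-linear, $|Z|^2$ is a sum of squares of two linear forms, hence a quadratic form; $\Vert\cdot\Vert^2$ is also quadratic by the Euclidean choice, so $Q$ is a quadratic form on $\Lambda_\R$. On $\ker Z$ one has $Q(x)=-C^2\Vert x\Vert^2$, strictly negative for every nonzero $x$, so $Q$ is negative definite on $\ker Z$. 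For a $\sigma$-semistable $A$, condition (1) gives $|Z(A)|^2\geq C^2\Vert v(A)\Vert^2$, i.e.\ $Q(v(A))\geq 0$. There is no substantive obstacle in this proof; the only point worth stressing is that restricting the norm in (1) to a Euclidean one turns $\Vert\cdot\Vert^2$ into an honest quadratic form and makes the construction of $Q$ immediate.
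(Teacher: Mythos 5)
Your proof is correct. For $(1)\Rightarrow(2)$ you follow exactly the paper's route: choose a Euclidean norm and set $Q=|Z|^2-C^2\Vert\cdot\Vert^2$. For $(2)\Rightarrow(1)$ you take a genuinely different path: the paper constructs an explicit norm adapted to $Z$, namely $\Vert a+b\Vert^2=-Q(a)+|Z(b)|^2$ on the decomposition $\Lambda_\R=\Ker Z\oplus(\Ker Z)^{\perp}$, and a short algebraic manipulation yields $|Z(A)|^2\geq \tfrac{K}{1+K}\Vert v(A)\Vert^2$ for that norm, the general case following from the equivalence of norms in finite dimension; you instead note that $S=\{\Vert x\Vert=1,\ Q(x)\geq 0\}$ is compact and disjoint from $\Ker Z$, so $|Z|$ is bounded below on $S$ by some $C>0$, and homogeneity concludes. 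Your argument is shorter, works for an arbitrary norm in one stroke, and avoids the choice of a complement of $\Ker Z$. What the explicit computation buys is a transparent control of the constant in terms of $Q$ and $Z$, which is what the author invokes in the remark following the proposition (and later in the uniform support statement); your compactness argument proves existence of $C$ for each given $Z$, and one would need to add a word on the continuous dependence of $\min_{S}|Z|$ on $Z$ to recover that uniformity. One cosmetic point: a nonzero $\sigma$-semistable object $A$ satisfies $Z(A)\in\R^*_+e^{i\pi\phi}\neq 0$, hence $v(A)\neq 0$, so the case $v(A)=0$ that you set aside does not in fact occur.
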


\begin{proof}
Supposons la première condition satisfaite. Soit $\Vert .\Vert $ une norme euclidienne sur $\Lambda_\R$, et $C$ la constante correspondante. Soit $Q$ la forme quadratique sur $\Lambda_\R$ telle que, pour tout $w\in\Lambda$, 
$$Q(w)=|Z(w)|^2-C^2\Vert w\Vert ^2.$$
Manifestement, $Q$ vérifie les propriétés de la seconde condition. 

Réciproquement, supposons l'existence de $Q$ comme dans $2$. Soit $\Lambda'$ l'orthogonal du noyau de $Z$ dans $\Lambda_\R$. Alors la restriction de $Z$ à $\Lambda'$ est injective, et la forme quadratique 
$$a+b\mapsto \Vert a+b\Vert ^2:=-Q(a)+|Z(b)|^2,$$
où $a\in \Ker Z$ et $b\in\Lambda'$ définit une norme $\Vert .\Vert $ sur $\Lambda_\R$. Il suffit de vérifier la condition $1$ pour cette norme. 

Soit $K>0$ une constante telle que pour tout $b$ dans $\Lambda'$, on ait $KQ(b)\leq |Z(b)|^2.$ Soit $A$ un objet $\sigma$-semistable de $\cD$. On écrit $v(A)=a+b, a\in \Ker Z, b\in\Lambda'.$ Alors $KQ(v(A))=KQ(a)+KQ(b)\geq 0,$ d'où $KQ(a)+|Z(b)|^2=KQ(a)+|Z(A)|^2\geq 0$. Ainsi, on a 
$$|Z(A)|^2\geq -KQ(a)=K\Vert v(A)\Vert ^2-K|Z(A)|^2,$$
i.e.
$$|Z(A)|^2\geq \frac{K}{1+K} \Vert v(A)\Vert ^2,$$
ce qui conclut.
\end{proof}

\begin{rema}\label{remarque:uniformite}
La preuve montre en fait le résultat suivant : étant donnée une forme quadratique $Q$ sur $\Lambda_\R$,  il existe une constante $C>0$ telle que si $\sigma$ est une condition de stabilité satisfaisant la condition du second énoncé, le premier énoncé vaut pour $\sigma$ avec la constante $C$.
\end{rema}

\begin{rema}
Il suffit de vérifier les deux conditions de la proposition pour les objets $\sigma$-semistables de phase dans $\mathopen]0,1]$, i.e., pour les objets semistables dans le c\oe{}ur de $\cP$.
\end{rema}

La définition suivante est fondamentale.

\begin{defi}\label{definition:cs}
Soit $\cD$ une catégorie triangulée, et soit $\sigma=(\cP, Z)$ une précondition de stabilité sur $\cD$. On dit que $\sigma$ satisfait la condition de support si les conditions équivalentes de la proposition précédente sont satisfaites. Une condition de stabilité sur~$\cD$ est une précondition de stabilité qui satisfait la condition de support. 
\end{defi}

On dira aussi que $\sigma$ satisfait la condition de support par rapport à la forme quadratique $Q$ sur $\Lambda_\R$ si $Q$ satisfait la seconde condition de la proposition précédente.

\begin{rema}\label{remarque:support-action}
Les actions naturelles de $\widetilde{GL}_2^+(\R)$ et $\mathrm{Aut}_\Lambda(\cD)$ sur l'ensemble des préconditions de stabilité induisent des actions sur l'ensemble des conditions de stabilité. Plus précisément, si $\sigma$ satisfait la condition de support par rapport à $Q$, alors la discussion de \ref{subsection:action} montre que toute condition de stabilité dans la $\widetilde{GL}_2^+(\R)$-orbite de $\sigma$ satisfait la condition de support par rapport à $Q$.
\end{rema}

On peut tester la condition de support sur les objets stables.

\begin{prop}\label{proposition:support-stable}
Soit $\sigma=(\cP, Z)$ une précondition de stabilité sur la catégorie triangulée $\cD$. Soit $\Lambda$ un groupe abélien libre de type fini par lequel $Z$ se factorise, et soit~$Q$ une forme quadratique sur $\Lambda_\R$ définie négative sur le noyau de $Z$. Alors $\sigma$ satisfait la condition de support par rapport à $Q$ si et seulement si pour tout objet $\sigma$-stable de~$\cD$, on a $Q(v(A))\leq 0$.
\end{prop}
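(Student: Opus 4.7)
The plan is to establish the equivalence by observing that one direction is immediate and reducing the other to a convexity argument inside $\Lambda_\R$. Let me read the stated inequality as the natural one consistent with Definition \ref{definition:cs} (that is, $Q(v(A))\geq 0$; the sign in the excerpt seems to be a typographical slip, since stable objects are semistable and would otherwise force $Q(v(A))=0$ on every stable).

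The forward implication is trivial: a $\sigma$-stable object is in particular $\sigma$-semistable, so the support condition applied to it yields the required sign on $Q(v(A))$. For the converse I would assume the sign condition on all $\sigma$-stable objects and propagate it to every $\sigma$-semistable object via a Jordan--H\"older filtration. Let $B$ be $\sigma$-semistable of phase $\phi$. By Proposition~\ref{proposition:cP-ab}, $\cP(\phi)$ is abelian, and $B$ admits a Jordan--H\"older filtration in $\cP(\phi)$ whose simple factors $S_1,\ldots,S_n$ are precisely the $\sigma$-stable objects of phase $\phi$ appearing as composition factors of $B$; here local finiteness of $\cP(\phi)$ is what one needs, and this is a standard consequence of the support condition combined with the discreteness of $v(K_0(\cD))$ in $\Lambda$ (using Remark~\ref{remarque:uniformite} to bound $|Z(S_i)|\leq |Z(B)|$ uniformly). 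Then $v(B)=\sum_{i=1}^n v(S_i)$, and each $Z(S_i)$ lies on the single ray $\R^*_+e^{i\pi\phi}$.

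The heart of the argument is then the linear-algebraic claim that the set
$$C_\phi=\{w\in\Lambda_\R\mid Z(w)\in\R_{\geq 0}\,e^{i\pi\phi}\text{ and }Q(w)\geq 0\}$$
is a convex cone. Granting this, $v(B)=\sum v(S_i)\in C_\phi$, and hence $Q(v(B))\geq 0$, which is what we want. To prove convexity of $C_\phi$, I would restrict $Q$ to the linear subspace $L=Z^{-1}(\R\,e^{i\pi\phi})\subset\Lambda_\R$, which contains $\ker Z$ as a hyperplane. Since $Q|_{\ker Z}$ is negative definite, the signature of $Q|_L$ is either $(0,\dim L)$ or $(1,\dim L-1)$. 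In the first case $\{Q\geq 0\}\cap L=\{0\}$ and $C_\phi=\{0\}$ is trivially convex. In the Lorentzian case $\{Q\geq 0\}\cap L$ is a double cone with components $C_+$ and $C_-=-C_+$, each of which is convex; the key observation is that $\ker Z\cap\{Q\geq 0\}=\{0\}$, so $\ker Z$ separates $C_+$ from $C_-$, and the half-space condition $Z(w)\in\R_{\geq 0}\,e^{i\pi\phi}$ selects exactly one of the two components.

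The main technical obstacle is the existence of the Jordan--H\"older filtration in $\cP(\phi)$, i.e., verifying that the semistable $B$ decomposes in finite length into $\sigma$-stable factors of the same phase; once this is secured, the convexity claim is elementary linear algebra in Lorentzian signature and the rest of the argument is immediate.
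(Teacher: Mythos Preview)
Your approach is essentially the paper's: pass to Jordan--H\"older factors of a semistable object and then check that the positivity of $Q$ is preserved under sums of vectors whose $Z$-images are collinear. The paper phrases the convexity step by writing $Q(v)=R(Z(v))-\Vert p(v)\Vert^2$, with $p$ the orthogonal projection onto $\Ker Z$ and $R$ a quadratic form on $\C\simeq\R^2$, and then using $\sqrt{R(Z(A))}=\sum_i\sqrt{R(Z(A_i))}$ (collinearity) together with the triangle inequality $\Vert p(v(A))\Vert\leq\sum_i\Vert p(v(A_i))\Vert$. This is precisely your Lorentzian-cone convexity written out in coordinates; your formulation via the signature of $Q|_L$ is a clean repackaging of the same computation (you should also allow the degenerate signature $(0,\dim L-1)$, but that case is trivial).

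One genuine point: your justification for the existence of Jordan--H\"older filtrations is circular, since you invoke the support condition, which is exactly what is being established. The paper simply takes the Jordan--H\"older factors without comment; in every application later in the text, $\sigma$ is already known to satisfy the support condition for some other form (so $\cP(\phi)$ has finite length) and one is checking it for a new $Q$. You are right to flag this as the one nontrivial hypothesis, but the resolution is not the one you sketch.
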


\begin{proof}
La nécessité de la condition est évidente. Montrons qu'elle est suffisante. 

Notons $p$ la projection orthogonale $\Lambda_\R\mapsto \Ker Z$, et écrivons, pour tout $v\in\Lambda$,
$$Q(v)=R(Z(v))-\Vert p(v)\Vert ^2,$$
où $R$ est une forme quadratique sur $\R^2\simeq\C$, et $\Vert .\Vert $ est la norme euclidienne induite par~$-Q$ sur~$\Ker Z$.

Soit $A$ un objet $\sigma$-semistable de $\cD$, et soient $A_1, \ldots, A_n$ ses facteurs de Jordan-Hölder. Par hypothèse, les $Q(v(A_i))$ sont tous positifs, donc les $R(Z(A_i))$ aussi.

On a 
$$R(Z(A))=\Big(\sum_i \sqrt{R(Z(A_i))}\Big)^2$$
car les $Z(A_i)$ sont tous sur la même demi-droite passant par l'origine, et ont pour somme~$Z(A)$. De plus, les $A_i$ étant $\sigma$-stables, ce qui précède montre
$$\sum_i \sqrt{R(Z(A_i))}\geq \sum_i \Vert p(v(A_i))\Vert \geq \Vert p(v(A))\Vert $$
par inégalité triangulaire, ce qui prouve bien $Q(v(A))\geq 0.$
\end{proof}

\begin{prop}\label{proposition:support-discret}
Soit $\cD$ une catégorie triangulée, et soit $\sigma=(\cP, Z)$ une précondition de stabilité sur $\cD$ satisfaisant la condition de support. Alors l'image par $Z$ de l'ensemble des objets $\sigma$-semistables de $\cD$ est un sous-ensemble discret de $\C$.
\end{prop}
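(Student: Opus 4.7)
Le plan est d'exploiter la premi�re formulation de la condition de support via la proposition \ref{proposition:equivalent-support}. Fixons une norme $\Vert . \Vert$ sur $\Lambda_\R$, et soit $C>0$ la constante correspondante, de telle sorte que $|Z(A)|\geq C\Vert v(A)\Vert$ pour tout objet $\sigma$-semistable $A$ de $\cD$. Pour �tablir la discr�tion de l'image, il me suffira de v�rifier que cette image rencontre tout sous-ensemble born� de $\C$ en un ensemble fini.

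Fixons donc $M>0$ et consid�rons l'ensemble
$$S_M=\{v(A)\in\Lambda\mid A \text{ est $\sigma$-semistable avec } |Z(A)|\leq M\}.$$
La condition de support entra�ne imm�diatement que $\Vert v(A)\Vert\leq M/C$ pour tout $A$ semistable avec $|Z(A)|\leq M$, si bien que $S_M$ est contenu dans l'intersection du r�seau $\Lambda$ avec la boule ferm�e de rayon $M/C$ de $\Lambda_\R$. Comme $\Lambda$ est un groupe ab�lien libre de type fini, cette intersection est finie, donc $S_M$ l'est aussi.

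Puisque la charge centrale $Z$ se factorise par $v$, l'image par $Z$ des objets $\sigma$-semistables dont la charge est dans le disque $\{|z|\leq M\}$ est contenue dans $Z(S_M)$, qui est fini. L'image de $Z$ sur les objets semistables est donc un sous-ensemble de $\C$ dont l'intersection avec tout born� est finie : c'est un ferm� discret, d'o� la conclusion.

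L'argument est essentiellement direct, sans v�ritable obstacle : le c\oe{}ur de la d�monstration est simplement le fait que la condition de support transforme une borne sur $|Z|$ en une borne sur $\Vert v \Vert$, ce qui, combin� au caract�re discret du r�seau $\Lambda$ dans $\Lambda_\R$, donne la finitude locale voulue. Le seul point � garder � l'esprit est que la discr�tion est v�rifi�e au niveau de l'image dans $\C$, et non pas au niveau du r�seau lui-m�me --- la factorisation de $Z$ par $v$ rend ce passage automatique.
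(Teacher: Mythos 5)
Votre preuve est correcte et suit essentiellement le même argument que celui du texte : la condition de support convertit une borne sur $|Z|$ en une borne sur $\Vert v(A)\Vert$, et la finitude des points du réseau $\Lambda$ dans un compact de $\Lambda_\R$ donne la finitude locale de l'image. Le texte formule la même idée par contraposition, en une ligne ; votre rédaction directe est simplement plus détaillée.
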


\begin{proof}
Si l'ensemble en question n'est pas discret, la propriété de support permet de trouver un ensemble infini de $v\in\Lambda$ dans un compact de $\Lambda_\R$, ce qui est bien sûr impossible.
\end{proof}

\bigskip

Voici une première conséquence de la condition de support. Rappelons qu'une catégorie abélienne $\cA$ est de longueur finie si toute suite de monomorphismes dans $\cA$ $A_1\leftarrow A_2\leftarrow\cdots\leftarrow A_i\leftarrow\cdots$ est stationnaire, ainsi que toute suite d'épimorphismes $A_1\ra A_2\ra \cdots \ra A_i\ra\cdots$.

\begin{prop}
Soit $\sigma=(\cP, Z)$ une condition de stabilité sur une catégorie triangulée $\cD$. Pour tout $\phi\in\R$, la catégorie abélienne $\cP(\phi)$ est de longueur finie. 
\end{prop}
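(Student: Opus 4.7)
The plan is to leverage proposition~\ref{proposition:cP-ab}, which says that $\cP(\phi)$ is abelian, together with the support condition, in order to rule out infinite strictly monotone chains of subobjects in $\cP(\phi)$. First I would reduce both halves of the statement to a single claim: an infinite chain of monomorphisms $\cdots \hookrightarrow A_{i+1} \hookrightarrow A_i \hookrightarrow \cdots \hookrightarrow A_1$ in $\cP(\phi)$ is a descending chain of subobjects of $A_1$, while an infinite chain of epimorphisms $A_1 \to A_2 \to \cdots$ is equivalent, after passing to kernels computed inside the abelian category $\cP(\phi)$, to an ascending chain of subobjects of $A_1$. It thus suffices to show that any strictly monotone chain of subobjects of a fixed $A \in \cP(\phi)$ is finite.

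For such a chain, let $Q_i$ denote the successive quotients. By proposition~\ref{proposition:cP-ab} each $Q_i$ lives in $\cP(\phi)$, and strictness of the chain means $Q_i \neq 0$, so $Z(Q_i) \in \R^*_+ e^{i\pi\phi}$. Since all the $Z(Q_i)$ lie on a common ray, a telescoping argument gives
$$\sum_i |Z(Q_i)| \leq |Z(A)| < \infty.$$
I would then apply the support condition in its first, metric form (proposition~\ref{proposition:equivalent-support}): there exists $C>0$ with $|Z(Q_i)| \geq C \Vert v(Q_i) \Vert$ for all $i$. Since $\Lambda$ is a free abelian group of finite rank, its nonzero elements are bounded below by some $\delta>0$ in norm, so each $i$ with $v(Q_i)\neq 0$ contributes at least $C\delta$ to the convergent series, which forces all but finitely many $Q_i$ to satisfy $v(Q_i)=0$.

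The final step is to eliminate the remaining indices. Because $Z$ factors through $v$, the condition $v(Q_i) = 0$ forces $Z(Q_i)=0$; but a nonzero object of $\cP(\phi)$ must have central charge on the ray $\R^*_+ e^{i\pi\phi}$, hence nonzero. Thus $Q_i = 0$, contradicting strictness. Only finitely many steps can be strict, so the chain stabilizes.

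The subtlety I expect is precisely the last step --- recognizing that $v(Q_i)=0$ is incompatible with $Q_i$ being nonzero in $\cP(\phi)$ --- together with the preliminary use of proposition~\ref{proposition:cP-ab} to ensure the successive quotients actually live in $\cP(\phi)$ rather than in the larger heart $\cP(\mathopen]0,1\mathclose])$. Once these are secured, the argument reduces to a telescoping identity on a ray combined with the discreteness of $\Lambda$ in $\Lambda_\R$; there are no other moving parts, and in particular one never needs to invoke the full strength of proposition~\ref{proposition:support-discret}.
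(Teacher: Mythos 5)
Your proof is correct and follows essentially the same route as the paper: both reduce to a telescoping sum of charges $Z(Q_i)$ of successive quotients lying on the ray $\R^*_+e^{i\pi\phi}$, then use the support condition to forbid infinitely many nonzero terms (the paper does this by invoking the discreteness of the image of $Z$ on semistable objects, proposition~\ref{proposition:support-discret}, which you simply inline via the metric form of the support condition and the discreteness of $\Lambda$ in $\Lambda_\R$). Your explicit reduction of the epimorphism chains to ascending chains of kernels is a small completeness improvement over the paper's « supposons par exemple », but not a different argument.
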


\begin{proof}
Supposons par exemple donnée une suite infinie d'inclusions strictes 
$$\cdots\subset A_i\subset\cdots\subset A_1$$
dans laquelle les $A_i$ sont tous semistables de phase $\phi$. Soit $v_i=v(A_i)\in\Lambda$. Alors on peut écrire $v_i=r_ie^{i\pi\phi}$ avec $r_i>0$. La suite exacte
$$0\ra A_{i+1}\ra A_i\ra A_i/A_{i+1}\ra 0$$
dans la catégorie abélienne $\cP(\phi)$ nous permet d'écrire 
$$r_ie^{i\pi\phi}=r_{i+1}e^{i\pi\phi}+Z(A_i/A_{i+1}),$$
avec $Z( A_i/A_{i+1})\in \R^*_+e^{i\pi\phi}$, ce qui montre que la suite $(r_i)_{i\geq 1}$ est strictement décroissante, et que par conséquent la suite des $Z(A_i/A_{i+1})$ converge vers $0$. D'après la proposition~\ref{proposition:support-discret}, la suite des $Z(A_i/A_{i+1})$ est stationnaire à $0$, contradiction.
\end{proof}

La proposition précédente garantit l'existence de filtrations de Jordan-Hölder dans $\cP(\phi)$ : tout objet semistable de phase $\phi$ a une filtration par des sous-objets semistables de phase $\phi$ dont les quotients successifs sont stables.

Introduisons une dernière condition de finitude parfois utile.

\begin{defi}
Soit $\sigma$ une condition de stabilité sur une catégorie triangulée $\cD$. On dit que $\sigma$ est rationnelle si la charge centrale de $\sigma$ est à valeurs dans $\Q[i]$.
\end{defi}

\section{L'espace des conditions de stabilité}

Dans cette section, nous fixons une catégorie triangulée $\cD$ et un morphisme de groupes abéliens $v : K_0(\cD)\ra\Lambda$, où $\Lambda$ est un groupe abélien libre de type fini.

\subsection{Préliminaires topologiques}

Soit $\mathrm{Dec}(\cD)$ l'ensemble des découpages de $\cD$. Si $\cP$ est un élément de $\mathrm{Dec}(\cD)$, et si $A$ est un objet de $\cD$, on note $\phi_\cP^-(A)$ et $\phi^+_\cP(A)$ la plus petite et la plus grande phase respectivement qui apparaissent dans la filtration de Harder-Narasimhan de $A$. On munit l'ensemble $\mathrm{Dec}(\cD)$ d'une distance $d$ --- à valeurs dans $[0, +\infty]$ --- par la formule
$$d(\cP, \cQ)=\sup_{A\in \mathrm{Ob}(\cD)}\{|\phi^+_\cP(A)-\phi^+_\cQ(A)|, |\phi^-_\cP(A)-\phi^-_\cQ(A)|\}.$$
C'est bien légitime car $d(\cP, \cQ)=0$ implique que pour tout réel $\phi$, et tout $A\in \mathrm{Ob}(\cP(\phi))$, on a $\phi^+_\cQ(A)=\phi^-_\cQ(A)=\phi$, soit $A\in \mathrm{Ob}(\cQ(\phi))$, et $\cP=\cQ$. On peut réécrire la distance~$d$ comme suit, voir \cite[Lemma 6.1]{Bridgeland07}.

\begin{lemm}\label{lemme: distance-alt}
Soient $\cP$ et $\cQ$ deux découpages de $\cD$. On a 
$$d(\cP, \cQ)=\inf\{\varepsilon\geq 0, \forall\phi\in\R, \mathrm{Ob}(\cQ(\phi))\subset\mathrm{Ob}(\cP([\phi-\varepsilon, \phi+\varepsilon]))\}.$$
\end{lemm}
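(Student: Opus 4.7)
Notons $\delta$ le membre de droite. On �tablit s�par�ment les in�galit�s $\delta\leq d(\cP, \cQ)$ et $d(\cP, \cQ)\leq \delta$. Pour la premi�re, il suffit d'observer que tout objet non nul $A$ de $\cQ(\phi)$ v�rifie $\phi_\cQ^-(A)=\phi_\cQ^+(A)=\phi$, donc $|\phi_\cP^\pm(A)-\phi|\leq d(\cP, \cQ)$ ; ainsi $A$ est un objet de $\cP([\phi-d(\cP, \cQ), \phi+d(\cP, \cQ)])$, et la valeur $\varepsilon=d(\cP, \cQ)$ est admissible dans l'infimum d�finissant $\delta$.

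Pour la seconde in�galit�, fixons $\varepsilon>\delta$ et un objet $A$ de $\cD$. Soient $F_1, \ldots, F_n$ les facteurs de sa filtration de Harder-Narasimhan par rapport � $\cQ$, avec $F_i\in\cQ(\psi_i)$ et $\psi_1>\cdots>\psi_n$ ; en particulier $\phi_\cQ^+(A)=\psi_1$ et $\phi_\cQ^-(A)=\psi_n$. L'hypoth�se donne $F_i\in\cP([\psi_i-\varepsilon, \psi_i+\varepsilon])$ pour tout $i$. Puisque $A$ est extension it�r�e des $F_i$ et que les sous-cat�gories $\cP(\leq \phi)$ et $\cP(\geq \phi)$ sont stables par extensions par construction, il vient
$$\phi_\cP^+(A)\leq \max_i \phi_\cP^+(F_i)\leq \psi_1+\varepsilon \quad\text{et}\quad \phi_\cP^-(A)\geq \min_i \phi_\cP^-(F_i)\geq \psi_n-\varepsilon,$$
soit deux des quatre in�galit�s souhait�es.

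Pour obtenir les in�galit�s oppos�es $\phi_\cQ^+(A)\leq \phi_\cP^+(A)+\varepsilon$ et $\phi_\cQ^-(A)\geq \phi_\cP^-(A)-\varepsilon$, je v�rifierais que les morphismes aux bouts de la filtration HN sont non nuls. Consid�rons le triangle distingu� $F_1\to A\to X\to F_1[1]$ o� l'axiome de l'octa�dre identifie $X$ � une extension it�r�e des $F_i$ pour $i\geq 2$ ; les phases des HN-facteurs de $X$ sont major�es par $\psi_2<\psi_1$, si bien que le lemme~\ref{lemme:orthogonalite} entra�ne $\Hom(F_1, X)=0$ et $\Hom(F_1, X[-1])=0$. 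La suite exacte longue des $\Hom(F_1, -)$ identifie alors $\Hom(F_1, A)$ � $\Hom(F_1, F_1)$, et sous cette identification l'identit� de $F_1$ correspond au morphisme $F_1\to A$ du triangle, qui est donc non nul. Un argument dual, appliqu� au triangle $X'\to A\to F_n\to X'[1]$ o� $X'$ est une extension it�r�e des $F_i$ pour $i\leq n-1$, produit un morphisme non nul $A\to F_n$. Appliquant alors le lemme~\ref{lemme:orthogonalite} � ces deux morphismes, on obtient $\psi_1-\varepsilon\leq \phi_\cP^-(F_1)\leq \phi_\cP^+(A)$ et $\phi_\cP^-(A)\leq \phi_\cP^+(F_n)\leq \psi_n+\varepsilon$, ce qui ach�ve les quatre in�galit�s. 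Par passage au supremum sur $A$ puis � l'infimum sur $\varepsilon>\delta$, on conclut $d(\cP, \cQ)\leq \delta$. L'obstacle principal est la non-nullit� des morphismes $F_1\to A$ et $A\to F_n$ : c'est elle qui permet de r�cup�rer la sym�trie entre $\cP$ et $\cQ$ non apparente dans la formule, o� seule l'inclusion $\cQ\subset \cP$ figure.
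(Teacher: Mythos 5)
Votre démonstration est correcte et correspond à l'argument standard de \cite[Lemma 6.1]{Bridgeland07}, que le texte cite sans le reproduire : la stabilité par extensions des catégories $\cP(\leq\phi)$ et $\cP(\geq\phi)$ fournit deux des quatre inégalités requises, et la non-nullité des morphismes $F_1\to A$ et $A\to F_n$, jointe au lemme \ref{lemme:orthogonalite}, fournit les deux autres. Vous identifiez et justifiez correctement le point clé, à savoir cette non-nullité, obtenue proprement par la suite exacte longue des $\Hom(F_1,-)$ et son dual.
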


Soit maintenant $\Stab(\cD)$ l'ensemble des conditions de stabilité sur $\cD$ qui se factorisent par $v$. On dispose de deux projections naturelles 
$$p : \Stab(\cD)\ra \mathrm{Dec}(\cD), (\cP, Z)\mapsto \cP$$
et 
$$q : \Stab(\cD)\ra\Hom(\Lambda, \C), (\cP, Z)\mapsto Z.$$
Dans tout ce qui suit, nous munissons $\Stab(\cD)$ de la topologie la plus fine qui rende les applications $p$ et $q$ continues. 

Les deux projections font de $\Stab(\cD)$ un sous-espace de $\mathrm{Dec}(\cD)\times \Hom(\Lambda, \C)$, qui est muni de la topologie produit. En particulier, $\Stab(\cD)$ est un espace topologique séparé.

Le résultat suivant est élémentaire. 

\begin{prop}
Les actions naturelles des groupes $\widetilde{GL}_2^+(\R)$ et $\mathrm{Aut}_\Lambda(\cD)$ sur $\Stab(\cD)$ sont continues. 
\end{prop}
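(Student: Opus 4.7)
The plan is to exploit the fact that, by construction, the topology on $\Stab(\cD)$ is the initial topology relative to the two projections $p$ and $q$. Consequently, for each group action $G\times\Stab(\cD)\ra\Stab(\cD)$ it will be enough to check continuity after composition with $p$ and with $q$ separately.

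The charge component will be routine in both cases. After identifying $\Hom(\Lambda,\C)$ with $\C^{\rk\Lambda}$, the $\widetilde{GL}_2^+(\R)$-action $(g,Z)\mapsto g^{-1}\circ Z$ is jointly continuous (matrix inversion composed with a linear action on a finite-dimensional vector space), and the $\mathrm{Aut}_\Lambda(\cD)$-action $(T,Z)\mapsto Z\circ T^{-1}$ is $\C$-linear in $Z$ for each fixed $T$.

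Similarly, the $\mathrm{Aut}_\Lambda(\cD)$-action on the slicing factor will pose no difficulty: for any auto-equivalence $T$ one checks from the defining axioms that $\phi^\pm_{T.\cP}(A)=\phi^\pm_\cP(T^{-1}A)$, and then the change of variable $A\mapsto TA$ inside the supremum defining the distance $d$ shows that $\cP\mapsto T.\cP$ is an isometry of $(\mathrm{Dec}(\cD),d)$, hence continuous.

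The main obstacle will be joint continuity of $((a,g),\cP)\mapsto \cP\circ a$ on the slicing component. I will use the identity $\phi^\pm_{\cP\circ a}(A)=a^{-1}(\phi^\pm_\cP(A))$ and, given convergent sequences $(a_n,g_n)\to(a,g)$ and $\cP_n\to\cP$, bound
$$|a_n^{-1}(\phi^+_{\cP_n}(A))-a^{-1}(\phi^+_\cP(A))|\leq \bigl|a_n^{-1}(\phi^+_{\cP_n}(A))-a_n^{-1}(\phi^+_\cP(A))\bigr|+\bigl|a_n^{-1}(\phi^+_\cP(A))-a^{-1}(\phi^+_\cP(A))\bigr|.$$
The second summand is controlled by $\sup_{\psi\in\R}|a_n^{-1}(\psi)-a^{-1}(\psi)|\to 0$, which holds because, via the periodicity $a(\phi+1)=a(\phi)+1$, the topology on $\widetilde{GL}_2^+(\R)$ reduces to uniform convergence on $[0,1]$ (hence on $\R$), and inversion is continuous in this topological group. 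The first summand is controlled by the equicontinuity of $\{a_n^{-1}\}$: since $a_n^{-1}\to a^{-1}$ uniformly and $a^{-1}$ is uniformly continuous on $\R$ by periodicity, for every $\varepsilon>0$ there is $\delta>0$ such that $|x-y|<\delta$ implies $|a_n^{-1}(x)-a_n^{-1}(y)|<\varepsilon$ for all $n$ large enough; combined with $d(\cP_n,\cP)<\delta$ eventually, this makes the first summand uniformly in $A$ less than $\varepsilon$. The identical argument with $\phi^-$ in place of $\phi^+$ then yields $d(\cP_n\circ a_n,\cP\circ a)\to 0$, which finishes the sketch.
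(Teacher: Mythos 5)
Votre démonstration est correcte ; le texte qualifie d'ailleurs ce résultat d'élémentaire et n'en donne aucune preuve, de sorte qu'il n'y a pas d'argument du papier avec lequel comparer. Votre réduction à la continuité après composition avec $p$ et $q$, l'identité $\phi^\pm_{\cP\circ a}(A)=a^{-1}(\phi^\pm_\cP(A))$ et le contrôle uniforme en $A$ via l'équicontinuité des $a_n^{-1}$ constituent exactement le remplissage attendu de cette preuve omise.
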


\begin{rema}
On peut munir $\Stab(\cD)$ d'une métrique naturelle pour laquelle $\mathrm{Aut}_\Lambda(\cD)$ agit par isométries.
\end{rema}

\subsection{Le théorème de déformation de Bridgeland}

L'énoncé ci-dessous est le théorème principal de \cite{Bridgeland07}, tel que raffiné dans \cite[Appendix A]{BayerMacriStellari16}. Nous suivons essentiellement Bayer \cite{Bayer16}.

\begin{theo}\label{theoreme:def-Bridgeland}
Soit $Q$ une forme quadratique sur $\Lambda_\R$, et soit $\sigma$ une condition de stabilité sur $\cD$ qui satisfait la condition de support par rapport à $Q$. Soit $Z$ la charge centrale associée à $\sigma$, soit $\widetilde U$ l'ouvert de $\Hom(\Lambda, \C)$ constitué des $Z ' : \Lambda\ra\C$ tels que $Q$ est définie négative sur le noyau de $Z'$, et soit $U$ la composante connexe de $\widetilde U$ contenant~$Z$.

Soit enfin $V$ la composante connexe de $q^{-1}(U)$ contenant $\sigma$. Alors :
\begin{enumerate}
\item La restriction à $V$ de la projection naturelle $q : \Stab(\cD)\ra\Hom(\Lambda, \C), (\cP, Z)\mapsto Z$ est un revêtement de $U$;
\item Toute condition de stabilité dans $V$ vérifie la condition de support par rapport à~$Q$.
\end{enumerate}
\end{theo}

En particulier, l'espace topologique $\Stab(\cD)$ est muni d'une structure naturelle de variété complexe, qui rend $q$ localement biholomorphe.

\bigskip

Donnons les grandes lignes de la démonstration. Nous supposerons dans la suite --- c'est le cas non dégénéré --- que $Q$ est de signature $(2, \mathrm{rg}\Lambda-2)$. En particulier, si $Z'$ est un élément de $U$, alors $Z_\R : \Lambda_\R\ra\C$ est surjective.

On commence par un énoncé d'injectivité locale. 

\begin{lemm}\label{lemme:inj-loc}
Soient $\cP$ et $\cQ$ deux découpages de $\cD$ tels que $d(\cP, \cQ)<1$. Si $(\cP, Z)$ et $(\cQ, Z)$ sont deux conditions de stabilité sur $\cD$, alors $\cP=\cQ$. En particulier, la projection $q : \Stab(\cD)\ra\Hom(\Lambda, \C)$ est localement injective.
\end{lemm}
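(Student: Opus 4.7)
The plan is to prove $\cP(\phi) = \cQ(\phi)$ for every $\phi \in \R$; by Remarque~\ref{remarque:strictly-full} this forces $\cP = \cQ$. The ``en particulier'' clause will then follow since the topology on $\Stab(\cD)$ makes $p : \Stab(\cD) \to \mathrm{Dec}(\cD)$ continuous, so any two conditions in a small enough neighborhood of $\sigma$ are at $d$-distance $<1$ by the triangle inequality, and the first part applies.

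Fix $\phi$ and a nonzero $A \in \cP(\phi)$; let $0=A_0\to\cdots\to A_m=A$ be its $\cQ$-HN filtration with factors $E_j \in \cQ(\psi_j)$ and phases $\psi_0>\cdots>\psi_{m-1}$. The aim is to show $m=1$ and $\psi_0=\phi$. The distance condition, applied to $A$ (whose $\cP$-phases both equal $\phi$), gives $\psi_j \in (\phi-1,\phi+1)$ for every $j$; applied to each $E_j$ (whose $\cQ$-phases both equal $\psi_j$), it gives that every $\cP$-HN phase of $E_j$ lies in $(\psi_j-1,\psi_j+1)$.

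Now I would exploit the shared central charge $Z$. Writing $Z(A)=re^{i\pi\phi}=\sum_j r_j e^{i\pi\psi_j}$ with $r,r_j>0$ and all $\psi_j-\phi \in (-1,1)$, the imaginary part in coordinates where $e^{i\pi\phi}=1$ yields sign information, since $\sin(\pi\theta)$ keeps a definite sign on $(-1,0)$ and $(0,1)$. The same comparison applied to $Z(E_0)=r_0 e^{i\pi\psi_0}$ expanded along the $\cP$-HN of $E_0$, whose factor phases $\phi_{0,k}$ lie in $(\psi_0-1,\psi_0+1)$ and are strictly decreasing, forces $\phi^+_\cP(E_0) \geq \psi_0$; dually $\phi^-_\cP(E_{m-1}) \leq \psi_{m-1}$.

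Suppose for contradiction $\psi_0>\phi$. Then the top $\cP$-HN factor $F_0$ of $E_0$ has $\cP$-phase $\phi^+_\cP(E_0)\geq \psi_0>\phi$, so $F_0 \in \cP(>\phi)$; since $A \in \cP(\leq\phi)$, Lemme~\ref{lemme:orthogonalite} gives $\Hom_\cD(F_0,A)=0$. The sought contradiction is a nonzero morphism $F_0 \to A$, built as the composition $F_0 \to E_0 = A_1 \to A_2 \to \cdots \to A_m = A$. The main obstacle is ruling out the vanishing of this composite in the triangulated setting: my approach is to pass to the $\cP(>\phi)$-truncation $\tau^{\leq 0}_\cP A_1$ for the t-structure of Proposition~\ref{proposition:t-structure} at $\phi$, which is nonzero (it has $F_0$ among its $\cP$-HN factors), and to argue via the octahedral axiom and $\tau^{\leq 0}_\cP A = 0$ that this truncation cannot die along the iterated triangles $A_j \to A_{j+1} \to E_j$. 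This yields $\psi_0\leq\phi$, hence $\psi_0=\phi$; the symmetric argument gives $\psi_{m-1}\geq\phi$, and the strict monotonicity of the $\psi_j$ then forces $m=1$.
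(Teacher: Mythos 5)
Your proposal is correct and takes essentially the same route as the paper: both exploit the shared central charge to pin down the $\cQ$-phases of $A$ (your sign-of-imaginary-part argument giving $\psi_{m-1}\leq\phi\leq\psi_0$ and $\phi^+_\cP(E_0)\geq\psi_0$ is exactly the paper's ``comme plus haut'' step), and both then contradict the orthogonality lemme~\ref{lemme:orthogonalite} via a nonzero morphism from a $\cP$-semistable object of phase $>\phi$ into $A$. The only structural difference is that the paper replaces your full $m$-step $\cQ$-filtration by the two-step truncation $A_1\ra A\ra A_2$ with $A_1\in\cQ(\mathopen]\phi,\phi+1\mathclose[)$ and $A_2\in\cQ(\mathopen]\phi-1,\phi])$, so that your ``main obstacle'' collapses to a single factorization through $A_2[-1]$ killed by $\Hom(B,A_2[-1])=0$; your $m$-step version needs no octahedral gymnastics either, since the composite survives each triangle $A_i\ra A_{i+1}\ra E_i$ simply because $\Hom(F_0,E_i[-1])=0$ (one has $\phi^+_\cP(E_i[-1])<\psi_i<\psi_0\leq\phi^-_\cP(F_0)$ by the distance hypothesis).
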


\begin{proof}
Soit~$\phi\in\R$, et supposons par exemple que $\cP(\phi)$ est non nulle. Soit~$A$ un objet non nul de $\cP(\phi)$. On veut montrer que $A$ est un objet de $\cQ(\phi)$, raisonnons par l'absurde et supposons que ce n'est pas le cas.

Comme $d(\cP, \cQ)<1$, on a $\phi-1<\phi^-_\cQ(A)\leq \phi^+_\cQ(A)<\phi+1.$ Si $\phi^-_\cQ(A)> \phi$, alors $Z(A)\in \R^*_+e^{i\pi\phi}$ est somme de réels de la forme $r_i e^{i\pi\phi_i}$ avec $r_i>0, \phi<\phi_i<\phi+1$, contradiction. On a donc, par un argument symétrique, 
$$\phi-1<\phi^-_\cQ(A)\leq\phi\leq \phi^+_\cQ(A)<\phi+1.$$
Supposons par exemple $\phi^+_\cQ(A)>\phi$. On peut trouver un triangle distingué
$$A_1\ra A\ra A_2\ra A_1[1]$$
dans lequel $A_1$ est un objet non nul de $\cQ(\mathopen]\phi, \phi+1\mathclose[)$ et $A_2$ est un objet non nul de $\cQ(\mathopen]\phi-1, \phi]).$ Comme plus haut, on montre que $\psi :=\phi^+_\cP(A_1)>\phi$. Il existe donc un objet non nul $B$ de $\cP(\psi)$, et un morphisme non nul $B\ra A_1$.

Comme $A$ est dans $\cP(\phi)$, le morphisme composé $B\ra A_1\ra A$ est nul, donc $B\ra A_1$ se factorise par un morphisme $B\ra A_2[-1]$, contradiction car $A_2[-1]$ est un objet de $\cQ(\leq \phi-1)$, donc de $\cP(\leq\phi)$.
\end{proof}

On va maintenant construire des sections locales de $q$. Le lemme suivant est clair.

\begin{lemm}\label{lemme:normalisation}
Il existe un élément $g$ de $GL_2^+(\R)$ tel que pour tout $v\in\Lambda$, on ait 
$$Q(v)=|g(Z(v))|^2+Q(p(v)),$$
où $p$ est la projection orthogonale de $\Lambda_\R$ sur $\Ker Z$.
\end{lemm}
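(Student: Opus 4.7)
Le plan est de ramener l'énoncé à un argument d'algèbre linéaire élémentaire, en exploitant l'hypothèse de signature sur $Q$ déjà rappelée avant le lemme~: $Q$ est de signature $(2, \mathrm{rg}\,\Lambda - 2)$, et $Q$ est définie négative sur $\Ker Z$. Puisque $\Ker Z$ est un sous-espace de dimension $\mathrm{rg}\,\Lambda - 2$ sur lequel $Q$ est définie négative, $\Ker Z$ est un sous-espace négatif maximal pour $Q$. Son orthogonal $W:=(\Ker Z)^{\perp_Q}$ (au sens du produit scalaire indéfini associé à $Q$) est donc un plan sur lequel $Q$ est définie positive, et l'on a une décomposition $Q$-orthogonale $\Lambda_\R=W\oplus \Ker Z$. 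Je propose d'interpréter la projection $p$ comme celle associée à cette décomposition, ce qui est cohérent avec la convention utilisée dans la preuve de la proposition \ref{proposition:support-stable}.

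Ensuite, je montrerais que $Z|_W : W\ra\C$ est un isomorphisme d'espaces vectoriels réels. L'injectivité résulte du fait que $W\cap\Ker Z=0$ (conséquence de la non-dégénérescence de $Q|_{\Ker Z}$), et la surjectivité résulte de la remarque juste avant le lemme selon laquelle $Z_\R : \Lambda_\R\ra\C$ est surjective, combinée à l'égalité des dimensions. Pour tout $v\in\Lambda$, écrivons alors $v=w+p(v)$ avec $w\in W$~: l'orthogonalité donne
$$Q(v)=Q(w)+Q(p(v)),$$
et puisque $p(v)\in\Ker Z$, on a $Z(v)=Z(w)$.

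Enfin, la forme quadratique $Q|_W$, définie positive sur le plan $W$, se transporte via $(Z|_W)^{-1}$ en une forme quadratique définie positive $R$ sur $\C\simeq\R^2$ telle que $Q(w)=R(Z(w))=R(Z(v))$. Il reste à remarquer qu'une forme quadratique définie positive sur $\R^2$ s'écrit toujours comme $u\mapsto|g(u)|^2$ pour un $g\in GL_2(\R)$ (par exemple par décomposition de Cholesky, ou en envoyant une base $R$-orthonormée sur la base canonique). Quitte à composer $g$ avec une réflexion, on peut le supposer dans $GL_2^+(\R)$, d'où l'identité voulue.

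Il n'y a pas réellement d'obstacle technique dans cette preuve~: tout est de l'algèbre linéaire, et le seul point à vérifier avec soin est que la convention d'orthogonalité choisie pour $p$ coïncide bien avec celle utilisée implicitement ailleurs dans le texte; le reste découle formellement de la décomposition $Q$-orthogonale et de la classification des formes quadratiques définies positives sur $\R^2$.
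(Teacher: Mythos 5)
Votre preuve est correcte et correspond exactement \`a l'argument que l'auteur a en t\^ete : le texte ne donne d'ailleurs aucune d\'emonstration de ce lemme, qu'il d\'eclare simplement \emph{clair}. La d\'ecomposition $Q$-orthogonale $\Lambda_\R=(\Ker Z)^{\perp_Q}\oplus\Ker Z$, l'identification $Z|_{(\Ker Z)^{\perp_Q}}\simeq\C$ et l'\'ecriture d'une forme d\'efinie positive sur $\R^2$ sous la forme $u\mapsto|g(u)|^2$ avec $g\in GL_2^+(\R)$ sont pr\'ecis\'ement les ingr\'edients implicites, et votre v\'erification de la coh\'erence de la convention pour $p$ avec la preuve de la proposition \ref{proposition:support-stable} est le seul point qui m\'eritait d'\^etre explicit\'e.
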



%
%


Avec les notations du lemme précédent, supposons que l'on puisse prendre $g=1$. 

\begin{lemm}\label{lemme:desc-W}
Soit $W$ l'ensemble des éléments $Z'$ de $U$ tels que $Z_{|\Ker Z^\perp}=Z'_{|\Ker Z^\perp}$. Alors $Z'$ est dans $W$ si et seulement si $Z'=Z+u\circ p$ , où $u$ est un morphisme de $\Ker Z$ dans $\C$ tel que $\Vert u\Vert <1$.
\end{lemm}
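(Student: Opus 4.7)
The strategy is to use the orthogonal decomposition $\Lambda_\R=\Ker Z\oplus (\Ker Z)^\perp$ (well-defined thanks to the assumption that $Q$ has signature $(2,\mathrm{rg}\,\Lambda-2)$, so in particular $Z_\R$ is surjective and $Z|_{(\Ker Z)^\perp}:(\Ker Z)^\perp\to\C$ is an isomorphism) to parametrize both sides by a linear map $u:\Ker Z\to\C$, and then compare the defining conditions.

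First I would observe that any morphism $Z':\Lambda\to\C$ decomposes uniquely as $Z'=Z'\circ(1-p)+Z'\circ p$, and that the condition $Z'|_{(\Ker Z)^\perp}=Z|_{(\Ker Z)^\perp}$ is equivalent to $Z'\circ(1-p)=Z\circ(1-p)=Z$, hence to $Z'=Z+u\circ p$ where $u:=Z'|_{\Ker Z}$. This already establishes the bijection between the affine subspace $\{Z'\in\Hom(\Lambda,\C):Z'|_{(\Ker Z)^\perp}=Z|_{(\Ker Z)^\perp}\}$ and the vector space $\Hom(\Ker Z,\C)$, sending $Z'\leftrightarrow u$.

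Next I would compute $Q$ on $\Ker Z'$. Writing $v=a+b$ with $a\in\Ker Z$, $b\in(\Ker Z)^\perp$, the equation $Z'(v)=0$ becomes $Z(b)=-u(a)$, which uniquely determines $b$ in terms of $a$ via the isomorphism $Z:(\Ker Z)^\perp\lrasim\C$. Using the normalization of Lemma~\ref{lemme:normalisation} (with $g=1$), one has $Q(v)=|Z(b)|^2+Q(a)=|u(a)|^2-\|a\|^2$, where $\|\cdot\|^2=-Q$ on $\Ker Z$. Thus $Q$ is negative definite on $\Ker Z'$ if and only if $|u(a)|<\|a\|$ for every nonzero $a\in\Ker Z$; by compactness of the unit sphere in $\Ker Z$, this is precisely the condition $\|u\|<1$ for the operator norm induced by $\|\cdot\|$ on the source and the standard norm on $\C$. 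This shows $Z'\in\widetilde U$ if and only if $\|u\|<1$.

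Finally I would check that $Z'\in\widetilde U$ already implies $Z'\in U$ as soon as $\|u\|<1$: the path $t\mapsto Z+tu\circ p$, $t\in[0,1]$, stays in $\widetilde U$ because $\|tu\|=t\|u\|<1$ for all $t\in[0,1]$, and it connects $Z$ (at $t=0$) to $Z'$ (at $t=1$), so $Z'$ lies in the connected component $U$ of $Z$. Conversely, any $Z'\in W\subset U\subset\widetilde U$ satisfies $\|u\|<1$ by the previous paragraph. The only subtle point is the connectedness argument at the end, which is immediate thanks to the convexity of the open unit ball in $\Hom(\Ker Z,\C)$; the main content of the lemma is really the explicit formula $Q(v)=|u(a)|^2-\|a\|^2$ on $\Ker Z'$, which uses crucially the normalization from Lemma~\ref{lemme:normalisation}.
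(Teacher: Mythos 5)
Your proof is correct and follows essentially the same route as the paper's: write $Z'=Z+u\circ p$, use the normalization $Q(v)=|Z(v)|^2+Q(p(v))$ of Lemma~\ref{lemme:normalisation} to compute $Q$ on $\Ker Z'$ (parametrized by $\Ker Z$ via $a\mapsto a+b(a)$ with $Z(b(a))=-u(a)$, which is exactly the element $\alpha+\beta$ the paper exhibits in its contradiction argument), and translate negative-definiteness into $\Vert u\Vert<1$. You additionally make explicit the connectedness point ($Z'\in U$ rather than merely $\widetilde U$) via the segment $t\mapsto Z+tu\circ p$, a detail the paper leaves implicit.
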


Ici, $\Vert .\Vert $ est la norme d'opérateur, $\Ker Z$ étant muni de la norme $-Q$ et $\C$ de la valeur absolue usuelle.

\begin{proof}
Pour vérifier cette assertion, si $\Vert u\Vert <1$ et si $v\in\Lambda\setminus\{0\}$ est tel que $Z(v)+u\circ p(v)=0$, on a $Z(v)=-u(p(v))$, donc 
$$Q(v)=|Z(v)|^2+Q(p(v))=|u(p(v))|^2+Q(p(v))<0.$$
Réciproquement, si $Z'$ est dans $W$, $Z'-Z$ s'annule sur $\Ker Z^\perp$, donc s'écrit $u\circ p$ comme plus haut. Si $\Vert u\Vert \geq 1,$ on peut trouver $\alpha\in\Ker Z_\R$ tel que $Q(\alpha)=-1$ et $|u(\alpha)|\geq 1.$ On a supposé que $Z_\R$ est une surjection. Soit donc $\beta\in \Ker Z^\perp$ tel que $Z(\beta)=-u(\alpha)$. Alors 
$$Z'(\alpha+\beta)=0$$ 
et 
$$Q(\alpha+\beta)=-1+|Z(\beta)|^2=|u(\alpha)|^2-1\geq 0,$$
contradiction.
\end{proof}

Soit $W_\R$ l'ensemble des éléments $Z+u\circ p$ de $W$ tels que $u\in\Hom(\Ker Z, \R)$. Le résultat suivant contient le calcul clé.

\begin{lemm}\label{lemme:calcul-cle}
On garde les notations précédentes.
\begin{enumerate}
\item On peut trouver un voisinage $V$ de $\sigma$ dans $\Stab(\cD)$ tel que la restriction de $q$ à $V\cap q^{-1}(W_\R)$ est un homéomorphisme sur $W_\R$.
\item Toute condition de stabilité dans $V\cap q^{-1}(W_\R)$ vérifie la condition de support par rapport à $Q$.
\end{enumerate}
\end{lemm}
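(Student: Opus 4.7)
The plan is to construct, for each $Z' = Z + u\circ p \in W_\R$ with $u : \Ker Z \ra \R$ and $\|u\| < 1$, an explicit stability condition $s(Z') := (\cA, Z')$ with the same heart $\cA := \cP(\mathopen]0, 1])$ as $\sigma$, thereby defining a section $s : W_\R \ra \Stab(\cD)$ of $q$; then to confine the image of $s$ to a neighborhood $V$ of $\sigma$ where $q$ is injective, via Lemma \ref{lemme:inj-loc}. The decisive feature of $W_\R$ is that $\mathrm{Im}(Z'(v)) = \mathrm{Im}(Z(v))$ for every $v \in \Lambda$, since $u$ is real-valued. To check that $(\cA, Z')$ is a stability function on $\cA$, take a nonzero $A \in \cA$: if $\mathrm{Im}(Z(A)) > 0$ then $\mathrm{Im}(Z'(A)) > 0$; otherwise $Z(A) \in \R^*_-$ and a direct inspection of the $\sigma$-HN filtration of $A$ forces its pieces to lie in $\cP(1)$, so $A$ is $\sigma$-semistable of phase $1$. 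The support condition $|Z(A)|^2 \geq -Q(p(v(A)))$ together with the operator bound $|u(p(v(A)))| \leq \|u\| \sqrt{-Q(p(v(A)))}$ and $\|u\| < 1$ then yields $|u(p(v(A)))| < |Z(A)|$, so $Z'(A)$ remains in $\R^*_-$.

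The main technical step, and the principal obstacle, is the verification that $(\cA, Z')$ admits HN filtrations, so as to constitute a precondition of stability (Definition \ref{definition:condition-stab-ab}). The inputs available are that each $\cP(\phi)$ is of finite length, that $\sigma$-HN filtrations are already at hand in $\cA$, and that $u$ acts trivially on imaginary parts; following Bayer \cite{Bayer16}, one builds the $\sigma'$-HN filtration of $A \in \cA$ by refining or regrouping the $\sigma$-HN filtration according to the controlled change of phases induced by $u$. For assertion (2), Proposition \ref{proposition:support-stable} reduces the support condition for $\sigma'$ to the inequality $Q(v(A)) \geq 0$ for $A$ ranging over $\sigma'$-stable objects of $\cA$. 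One shows that any such $A$ must be $\sigma$-semistable --- otherwise a piece of its $\sigma$-HN filtration would provide a $\sigma'$-destabilizing subobject or quotient, using once more $\mathrm{Im}(Z') = \mathrm{Im}(Z)$ and the bound $\|u\| < 1$ --- and then the support condition for $\sigma$ gives $Q(v(A)) \geq 0$.

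To conclude that $s$ is a continuous section whose image lies in $V \cap q^{-1}(W_\R)$, one notes that $q \circ s = \mathrm{id}_{W_\R}$ tautologically, while the projection of $s$ to $\mathrm{Dec}(\cD)$ is controlled via the distance $d$ of Lemma \ref{lemme: distance-alt}: the $\sigma'$-phase of any semistable object depends continuously on $u$, so $d(\cP, s(Z')) \ra 0$ uniformly as $\|u\| \ra 0$. After shrinking $V$ so that $d(\cP, \cP') < 1$ holds throughout, Lemma \ref{lemme:inj-loc} forces $q$ to be injective on $V \cap q^{-1}(W_\R)$. Combined with $q \circ s = \mathrm{id}_{W_\R}$, this exhibits the restriction of $q$ as a homeomorphism from $V \cap q^{-1}(W_\R)$ onto $W_\R$, with inverse $s$, as desired.
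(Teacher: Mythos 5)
Your part (1) follows the same route as the paper: same heart $\cA=\cP(\mathopen]0,1])$, the check that $Z'=Z+u\circ p$ is still a stability function via $|u(p(v(A)))|\leq \Vert u\Vert\,\vvvert p(v(A))\vvvert< |Z(A)|$ for $A\in\cP(1)$, deferral to Bayer for the Harder--Narasimhan filtrations, continuity of the section, and Lemma \ref{lemme:inj-loc} for injectivity. (One caveat: the $\sigma'$-HN filtration of an object is in general neither a refinement nor a regrouping of its $\sigma$-HN filtration --- new destabilizing subobjects appear --- so that phrase does not describe the actual mechanism; the real input, which the paper isolates, is the finiteness of the set of classes $v(B)$ of subobjects $B\subset A$ with $\Re(Z+u\circ p)(B)$ bounded above.)

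The genuine gap is in part (2). You claim that every $\sigma'$-stable object of $\cA$ is $\sigma$-semistable, ``otherwise a piece of its $\sigma$-HN filtration would provide a $\sigma'$-destabilizing subobject.'' This implication is false for a fixed bound $\Vert u\Vert<1$: if $B\subset A$ satisfies $\phi_\sigma(B)>\phi_\sigma(A)$, the quantity $d_Br_A-d_Ar_B>0$ controlling this comparison can be arbitrarily small, while the perturbation $-u(p(v(B)))r_A+u(p(v(A)))r_B$ is only bounded in terms of $|Z(A)|$, $|Z(B)|$; so the sign can flip. Equivalently, $W_\R$ genuinely crosses walls (one can move $\Re Z(v)$ and $\Re Z(w)$ independently whenever $p(v)$, $p(w)$ are independent in $\Ker Z$), and objects that are unstable for $\sigma$ do become stable for some $\sigma'\in W_\R$ --- this is the whole wall-crossing phenomenon, and if your claim held, the support condition would propagate trivially. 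The paper's proof instead interpolates $Z_t=Z+t(u\circ p)$, assumes a $\sigma_1$-semistable $A$ with $Q(v(A))<0$, locates the largest $t_0$ at which $A$ is $\sigma_{t_0}$-semistable but strictly so, extracts via the Jordan--H\"older computation of Proposition \ref{proposition:support-stable} a strict subquotient $A_1$ with $Q(v(A_1))<0$, and iterates; the finiteness of the possible classes $v(B_i)$ together with $Z(B_i/B_{i+1})\in\mathbb H\cup\R^*_-$ forces the sequences to stabilize, giving the contradiction. This induction is the core content of the lemma and is what your argument replaces with an unjustified (and false) assertion.
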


\begin{proof}
Écrivons $\sigma=(\cP, Z)$, et soit $\cA$ le c\oe{}ur de $\cP$. Si $u$ est un élément de~$W_\R$, soit $\sigma_u=(\cA, Z+u\circ p)$. On vérifie immédiatement que $Z+u\circ p$ est une fonction de stabilité. On vérifie ensuite que $\sigma_u$ est une précondition de stabilité, i.e. que les filtrations de Harder-Narasimhan associées à $\sigma_u$ existent. Nous renvoyons à \cite[4.6 à 4.10]{Bayer16} pour une preuve détaillée. Le point-clé, qui suit de la condition de support, est de montrer que pour tout objet $A$ de $\cA$, et pour tout $C\in\R$, il n'existe qu'un nombre fini d'éléments de $\Lambda$ de la forme $v(B)$, où $B$ est un sous-objet de $A$ et $\Re(Z+u\circ p)(B)<C.$

\bigskip

Si $t\in[0,1]$, notons $Z_t=Z+t(u\circ p)$, et $\sigma_t=(\cA, Z_t)$, de sorte que $\sigma_0=\sigma$ et $\sigma_1=\sigma_u$. On note $\phi_t$ la phase correspondant à $\sigma_t$. Montrons maintenant que $\sigma_1=\sigma_u$ vérifie la condition de support par rapport à $Q$. On raisonne par l'absurde et l'on suppose qu'il existe un objet $A$ de $\cA$ qui est $\sigma_1$-semistable mais vérifie $Q(v(A))<0$. Alors $A$ n'est pas ($\sigma=\sigma_0$)-semistable. 

On va appliquer un premier exemple de wall-crossing. Les estimées mentionnées au premier paragraphe de la preuve permettent de montrer qu'il n'existe qu'un nombre fini d'éléments de $\Lambda$ de la forme $v(B)$, où $B$ est un sous-objet strict non-nul de $A$ tel qu'il existe $t\in[0,1]$ avec $\phi_t(B)\geq\phi_t(A).$ Pour chacun de ces $v(B)$, comme $\phi_0(B)\leq\phi_0(A)$, il existe par continuité un~$t'\in[0,1]$ maximal tel que $\phi_{t'}(B)=\phi_{t'}(A)$. Soit $t_0$ le plus grand de ces $t'$. Alors par construction, $A$ est $\sigma_{t_0}$-semistable, mais n'est pas stable, admettant un sous-objet strict~$B$ tel que $\phi_{t_0}(B)=\phi_{t_0}(A)$. 

L'argument de finitude des $v(B)$ montre en particulier que $A$ admet une filtration de Jordan-Hölder pour la condition de stabilité $\sigma_{t_0}$. Le calcul de la preuve de la proposition~\ref{proposition:support-stable}, utilisant cette filtration et la normalisation du lemme \ref{lemme:normalisation}, nous donne un sous-quotient strict $A_1=B_1/C_1$ de $A=A_0$ tel que 
$$\phi_{t_0}(A_1)=\phi_{t_0}(B_1)=\phi_{t_0}(C_1)=\phi_{t_0}(A)$$
et $Q(v(A_1))<0$. 

Par récurrence, on trouve des suites
$$0=C_0\subset C_1\subset \cdots\,\,\mathrm{et}\,\, A=B_0\supset B_1\supset\cdots,$$
et une suite décroissante $t_i$ d'éléments de $[0,1]$ tels que $A_i:=B_i/C_i$ est un sous-quotient strict, $\sigma_{t_i}$-semistable, de $A_{i-1}$, $Q(v(A_i))<0$ et 
$$\phi_{t_{i}}(A_{i+1})=\phi_{t_{i}}(B_{i+1})=\phi_{t_i}(C_{i+1})=\phi_{t_i}(A_i)$$
et $Q(v(A_{i+1}))<0.$


On a $\phi_{t_0}(B_1)=\phi_{t_0}(A)$ et, par $\sigma_1$-semistabilité de $A$, $\phi_{1}(B_1)\leq\phi_{1}(A)$. La linéarité des fonctions $Z_t$ montre alors l'inégalité
$$\phi_{t}(B_1)\geq\phi_{t}(A)$$
pour tout $t\leq t_0$. Une récurrence immédiate donne
$$\forall i\geq 0, \phi_{t_i}(B_i)\geq \phi_{t_i}(A).$$
D'après ce qui précède, les $v(B_i)$ sont en nombre fini, ainsi donc que les $Z(B_i)$. Cependant, si $B_{i+1}$ est un sous-objet strict de $B_i$, alors \[Z(B_i)-Z(B_{i+1})=Z(B_i/B_{i+1})\in\mathbb H\cup \R^*_-.\] Cela implique que la suite $(B_i)$ est stationnaire. De même, la suite $(C_i)$ est stationnaire, ainsi que la suite $(A_i)$, contradiction : $\sigma_1$ vérifie la condition de support par rapport à~$Q$.

\bigskip

Ce qui précède nous fournit une fonction 
$$W_\R\ra \Stab(\cD), u\mapsto (\cA, Z+u\circ p).$$
Montrons-en la continuité. Soit en effet $\phi$ un réel, et soit $A$ un élément $\sigma_u$-semistable de~$\cA$, de phase $\phi$. Grâce au lemme \ref{lemme: distance-alt}, et à décalage près, il s'agit de montrer que $\phi_0^+(A)$ et $\phi_0^-(A)$ sont proches de $\phi$ si $u$ est proche de $0$ dans $W_\R$. 

Soit $A_+$ le sous-objet semistable maximal de $A$ tel que $\phi_0^+(A)=\phi_0(A_+)$. Comme $A$ est $\sigma_1$-semistable, on a $\phi_1(A_+)\leq \phi$. Si $u$ est suffisamment proche de $0$, la continuité de la phase en fonction de $u$ nous garantit que l'on a $\phi_0^+(A)=\phi_0(A_+)\leq \phi_u(A_+)+\eta\leq\phi+\eta$ pour un $\eta>0$ petit, ne dépendant que de $u$. De même, on a dans ce cas $\phi_0^-(A)\geq \phi-\eta.$ On a finalement, pour $\eta$ tendant vers $0$ avec $u$, 
$$\phi-\eta\leq\phi_0^-(A)\leq\phi_0^+(A)\leq \phi+\eta,$$
ce qui conclut.

Les énoncés que l'on vient de montrer, avec l'injectivité locale, prouvent le résultat.
\end{proof}

Voici une version faible du théorème, qui contient essentiellement tous les arguments nécessaires à la preuve complète.

\begin{lemm}\label{lemme:homeo-local}
L'application $q$ est un homéomorphisme local. 
\end{lemm}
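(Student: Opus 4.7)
The plan is to combine Lemma~\ref{lemme:inj-loc} (local injectivity) with the construction of continuous local sections of $q$ on an open neighborhood of $Z = q(\sigma)$ in $\Hom(\Lambda,\C)$. I would first use the continuity and equivariance of the $\widetilde{GL}_2^+(\R)$-action on $\Stab(\cD)$ (the Proposition in \S\ref{subsection:action}), together with Remark~\ref{remarque:support-action} and Lemma~\ref{lemme:normalisation}, to reduce to the case where the normalizing element $g \in GL_2^+(\R)$ associated to $\sigma$ equals the identity. In this frame, Lemma~\ref{lemme:calcul-cle} provides a neighborhood $V$ of $\sigma$ in $\Stab(\cD)$ and a continuous local section $s : W_\R \to V$ of $q$ with $s(Z) = \sigma$, whose values still satisfy the support condition with respect to $Q$.

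To upgrade $s$ to a local section on a full open neighborhood of $Z$ in $\Hom(\Lambda, \C)$, I would propagate it via the $\widetilde{GL}_2^+(\R)$-action. For $Z'$ close to $Z$, write (locally) $Z' = g^{-1}(Z + u \circ p)$ with $g \in GL_2^+(\R)$ near the identity and $u \in \Hom(\Ker Z, \R)$ suitably chosen, and set $\tilde s(Z') = g \cdot s(Z + u \circ p)$. Combined with the local injectivity of Lemma~\ref{lemme:inj-loc}, this yields that $q$ is a local homeomorphism.

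The main obstacle I expect is precisely the existence of such a decomposition for every $Z'$ in an open neighborhood of $Z$. The map $(g,u) \mapsto g^{-1}(Z + u \circ p)$ from $GL_2^+(\R) \times \Hom(\Ker Z, \R)$ to $\Hom(\Lambda, \C)$ has image of real dimension $4 + (\rk\Lambda - 2)$, strictly less than $2 \rk\Lambda$ as soon as $\rk\Lambda \geq 3$; the missing $\rk\Lambda - 2$ real directions correspond to purely imaginary perturbations of $u \in \Hom(\Ker Z, \C)$. To access them, the construction has to be iterated: one applies Lemma~\ref{lemme:calcul-cle} at each nearby stability condition $\sigma_u = s(Z + u \circ p)$, whose normalization and kernel differ slightly from those of $\sigma$, so that the corresponding real slices $W_\R^{\sigma_u}$ sweep out complementary transverse directions. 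A continuity and openness argument then assembles these additional slices and the group action into a section defined on a full open neighborhood of $Z$ in $\Hom(\Lambda,\C)$, which is the technical heart of the proof.
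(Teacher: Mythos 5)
Your first half matches the paper's argument: reduce to the normalization $g=1$ of Lemma~\ref{lemme:normalisation}, invoke Lemma~\ref{lemme:calcul-cle} to obtain a continuous section over the real slice $W_\R$, and combine this with the local injectivity of Lemma~\ref{lemme:inj-loc}. You also correctly identify the dimension deficit: after using the $GL_2^+(\R)$-action to make $Z'$ agree with $Z$ on $(\Ker Z)^\perp$, one has $Z'=Z+u\circ p+i\,v\circ p$ with $u,v\in\Hom(\Ker Z,\R)$, and the real slice only accounts for the $u$-direction.

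However, the mechanism you propose for recovering the missing $\rk\Lambda-2$ imaginary directions does not work. The slices $W_\R^{\sigma_u}$ attached to the nearby conditions $\sigma_u$ consist of perturbations $u'\circ p_u$ with $u'$ \emph{real-valued} on $\Ker Z_u$; since $p_u$ takes values in $\Ker Z_u\subset\Lambda_\R$, every such functional is real-valued on all of $\Lambda_\R$, as is $u\circ p$ itself. Hence every charge reached by iterating these slices restricts on $\Ker Z$ to a real-valued functional, and after post-composition with a single $g\in GL_2^+(\R)$ its restriction to $\Ker Z$ takes values in the single real line $g^{-1}(\R)\subset\C$. As soon as $\dim\Ker Z\geq 2$, i.e. $\rk\Lambda\geq 4$, a generic small $Z'$ has $Z'|_{\Ker Z}$ with two-dimensional image in $\C$, so it is not of this form: the slices do not sweep out the transverse directions, however they are assembled. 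The idea missing from your proposal --- and the point of the paper's proof --- is that ``real'' is not intrinsic: one applies Lemma~\ref{lemme:calcul-cle} a \emph{second} time, after acting by a lift $T\in\widetilde{GL}_2^+(\R)$ of multiplication by $i$. Writing $-iZ'=-iZ_1+v\circ p$, the purely imaginary perturbation $i\,v\circ p$ of $Z_1=Z+u\circ p$ becomes a real perturbation of $-iZ_1$, i.e. an element of the real slice at $-T\sigma_1$; the lemma then produces $-T\tau$, hence a condition $\tau$ near $\sigma$ with charge $Z'$. This two-step use of the key lemma, rather than an iteration over nearby kernels, is the technical heart that your proposal defers.
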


\begin{proof}
Il suffit de montrer le résultat au voisinage de $\sigma$, et l'on peut supposer quitte à faire agir $GL_2^+(\R)$ que l'on peut prendre $g=1$ dans le lemme \ref{lemme:normalisation}.

Pour montrer le résultat, il faut montrer, vue l'injectivité locale, que si $Z'\in\Hom(\Lambda, \C)$ est suffisamment proche de $Z$, alors $Z'$ est la charge centrale d'une condition de stabilité $\tau$ proche de $\sigma$.

Comme $Z'$ est proche de $Z$, on peut choisir $g\in GL_2^+(\R)$, proche de $1$, tel que $g\circ Z'$ et~$Z$ coïncident sur $(\Ker Z)^\perp$. On peut remplacer $Z'$ par $g\circ Z'$ et supposer que $Z'$ s'écrit
$$Z'=Z+u\circ p + i v\circ p,$$
où $p$ est la projection orthogonale de $\Lambda_\R$ sur $\Ker Z$ et $u, v\in \Hom(\Ker Z, \R)$ sont proches de $0$. Notons $Z_1=Z+u\circ p$. Alors $Z_1$ est proche de $Z$ et le lemme ci-dessus fournit une condition de stabilité $\sigma_1$ de charge centrale~$Z_1$, proche de~$\sigma$. On peut remplacer $Z$ par $Z_1$, $\sigma$ par $\sigma_1$, et supposer donc 
$$Z'=Z+ i\, v\circ p',$$
où $p'$ est proche de $p$. Quitte à encore remplacer $Z'$ par $g\circ Z'$ avec $g$ proche de $1$, on peut supposer $p'=p$.

On a
$$-iZ'=-iZ+v\circ p.$$
Notant $T$ un relèvement de la multiplication par $i$ dans $\widetilde{GL}_2^+(\R)$, le lemme précédent fournit une condition de stabilité $-T\tau$, proche de $-T\sigma$, de charche centrale $-iZ'$. Alors $\tau$ est bien une condition de stabilité proche de $\sigma$, de charge centrale $Z'$.
\end{proof}

Soit $V$ la composante connexe de $p^{-1}(U)$ contenant $Z$. On peut adapter l'argument précédent pour montrer la surjectivité de $q_{|V} : V\ra U$. Mieux, on peut montrer de même la surjectivité locale de $q_{|V}$, ce qui implique par \cite[Proposition 5.6]{Huybrechts12} que $q_{|V}$ est bien un revêtement de $U$. 

Enfin, les arguments ci-dessus, joints à la seconde propriété du lemme \ref{lemme:calcul-cle} et à la remarque \ref{remarque:support-action}, montrent que les conditions de stabilité dans $V$ vérifient la condition de support par rapport à $Q$.

La remarque \ref{remarque:uniformite}, jointe au second énoncé du théorème, a la conséquence importante suivante qui traduit l'uniformité de la condition de support.

\begin{coro}\label{corollaire:uniformite-support}
Soit $\Vert .\Vert $ une norme sur $\Lambda$, et soit $\sigma$ une condition de stabilité sur~$\cD$. Alors il existe un voisinage ouvert $U$ de $\sigma$ dans $\Stab(\cD)$, et une constante $C>0$, tels que pour tout $\tau=(\cQ, Z')\in U$, et pour tout objet $A$ de $\cD$ qui est $\tau$-semistable, on ait
$$|Z(A)|\geq C\Vert v(A)\Vert .$$
\end{coro}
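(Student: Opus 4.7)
Le plan est de combiner le second point du th�or�me \ref{theoreme:def-Bridgeland} avec la remarque \ref{remarque:uniformite}, puis de passer de la charge variable $Z'$ � la charge fix�e $Z$ par un argument de continuit�.

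D'abord, puisque $\sigma$ satisfait la condition de support, la proposition \ref{proposition:equivalent-support} fournit une forme quadratique $Q$ sur $\Lambda_\R$, d�finie n�gative sur $\Ker Z$, telle que $Q(v(A))\geq 0$ pour tout objet $\sigma$-semistable $A$. Le th�or�me \ref{theoreme:def-Bridgeland} fournit alors un voisinage ouvert $V$ de $\sigma$ dans $\Stab(\cD)$ tel que toute condition de stabilit� $\tau\in V$ satisfait la condition de support par rapport � la \emph{m�me} forme quadratique $Q$. C'est ce point qui rend uniforme la situation.

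Appliquons maintenant la remarque \ref{remarque:uniformite} : pour la forme quadratique $Q$ et la norme $\Vert .\Vert $ fix�es, il existe une constante $C_0>0$ telle que pour toute condition de stabilit� $\tau=(\cQ, Z')$ satisfaisant la condition de support par rapport � $Q$, et tout objet $\tau$-semistable $A$, on a l'in�galit�
$$|Z'(A)|\geq C_0\Vert v(A)\Vert .$$
Appliqu�e aux $\tau\in V$, cette in�galit� donne le r�sultat cherch� mais avec $Z'$ au lieu de $Z$.

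Il reste � traduire cette in�galit� en termes de $Z$. La projection $q : \Stab(\cD)\ra\Hom(\Lambda, \C)$ �tant continue, quitte � restreindre $V$ en un voisinage $U\subset V$ de $\sigma$, on peut supposer que pour tout $\tau=(\cQ, Z')\in U$, la norme d'op�rateur $\Vert Z-Z'\Vert $ de $Z-Z' : \Lambda_\R\ra \C$ (pour la norme $\Vert .\Vert $ sur $\Lambda_\R$) est inf�rieure � $C_0/2$. On a alors pour tout $A$ $\tau$-semistable
$$|Z(A)|\geq |Z'(A)|-\Vert Z-Z'\Vert \,\Vert v(A)\Vert \geq \Bigl(C_0-\frac{C_0}{2}\Bigr)\Vert v(A)\Vert =\frac{C_0}{2}\Vert v(A)\Vert ,$$
ce qui �tablit le corollaire avec la constante $C=C_0/2$. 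Aucune �tape ne pr�sente de r�elle difficult� : l'essentiel du travail est d�j� contenu dans le th�or�me \ref{theoreme:def-Bridgeland}, dont l'apport crucial est que la forme quadratique $Q$ peut �tre \emph{conserv�e} localement autour de $\sigma$.
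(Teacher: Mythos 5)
Your proof is correct and takes essentially the same route as the paper, which obtains the corollary directly by combining Remark \ref{remarque:uniformite} with the second statement of Theorem \ref{theoreme:def-Bridgeland}. Your only addition is the final continuity estimate passing from $Z'$ to $Z$, a step the paper leaves implicit.
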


\subsection{Murs et chambres}

On décrit maintenant la variation des ensembles d'objets (semi)stables dans $\cD$ quand la condition de stabilité varie continûment. Dans ce qui suit, nous noterons $\cA_\sigma$ la catégorie abélienne c\oe{}ur d'une condition de stabilité $\sigma$.

Voici d'abord un lemme catégorique.

\begin{lemm}
Soit $\varepsilon>0$. Il existe un recouvrement de $\Stab(\cD)$ par des ouverts $U$ tels que si $\sigma=(\cP, Z)$ est dans $U$, et si $A$ est un objet de $\cP(\mathopen]\varepsilon,1-\varepsilon\mathclose[)$, alors pour tout $\tau\in U$, $A$ est un objet de $\cA_\tau$. 

Pour un tel ouvert $U$, et $\sigma=(\cP, Z)\in U$, si $f : B\ra C$ est un monomorphisme entre objets de $\cP(\mathopen]\varepsilon,1-\varepsilon\mathclose[)$, vus comme objets de $\cA_\sigma$, dont le conoyau est dans $\cP(\mathopen]\varepsilon,1-\varepsilon\mathclose[)$, alors, pour tout $\tau\in U$, $f$ est un monomorphisme.
\end{lemm}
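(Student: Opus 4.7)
The plan is to combine the pseudo-metric description of $\mathrm{Dec}(\cD)$ with the elementary fact that, for a bounded $t$-structure with heart $\cA$, a distinguished triangle in $\cD$ whose three vertices all lie in $\cA$ gives rise to a short exact sequence in $\cA$ (via the cohomological functor $H^0_\cA$).

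For the first assertion, I would cover $\Stab(\cD)$ by the open sets
$$U_{\sigma_0} := p^{-1}\bigl(\{\cQ \in \mathrm{Dec}(\cD) : d(\cQ, \cP_{\sigma_0}) < \varepsilon/2\}\bigr), \quad \sigma_0 \in \Stab(\cD),$$
which are open by continuity of the projection $p : \Stab(\cD) \to \mathrm{Dec}(\cD)$. By the triangle inequality, any two $\sigma, \tau \in U_{\sigma_0}$ satisfy $d(\cP_\sigma, \cP_\tau) < \varepsilon$; unwinding the definition of $d$, this gives $|\phi^\pm_{\cP_\sigma}(A) - \phi^\pm_{\cP_\tau}(A)| < \varepsilon$ for every object $A$ of $\cD$. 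If moreover $A \in \cP_\sigma(\mathopen]\varepsilon, 1-\varepsilon\mathclose[)$, then $\phi^\pm_{\cP_\sigma}(A) \in \mathopen]\varepsilon, 1-\varepsilon\mathclose[$, whence $\phi^\pm_{\cP_\tau}(A) \in \mathopen]0, 1\mathclose[$. The Harder-Narasimhan filtration of $A$ relative to $\tau$ has all its phases in this interval, so $A$ belongs to $\cP_\tau(\mathopen]0, 1]) = \cA_\tau$, as required.

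For the second assertion, fix such a $U$, let $\sigma \in U$ and let $f : B \to C$ be a monomorphism in $\cA_\sigma$ with $B$, $C$ and $D := \coker(f)$ all in $\cP_\sigma(\mathopen]\varepsilon, 1-\varepsilon\mathclose[)$. The short exact sequence $0 \to B \to C \to D \to 0$ in $\cA_\sigma$ comes from a distinguished triangle $B \to C \to D \to B[1]$ in $\cD$. By the first part, for every $\tau \in U$ the three vertices $B$, $C$ and $D$ lie in $\cA_\tau$. Applying to this triangle the cohomological functor $H^0_\tau$ attached to the $t$-structure of $\tau$, and using that $H^i_\tau$ vanishes on the heart $\cA_\tau$ for $i \neq 0$, the long exact sequence collapses to a short exact sequence $0 \to B \to C \to D \to 0$ in $\cA_\tau$. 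In particular, $f$ is a monomorphism in $\cA_\tau$ (with cokernel $D$), which is what we wanted.

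The only step that is not pure bookkeeping with the distance $d$ is the standard translation between distinguished triangles whose vertices lie in the heart of a $t$-structure and short exact sequences in that heart; since this is a purely formal property of $t$-structures, I do not foresee any real obstacle in writing the proof out in detail.
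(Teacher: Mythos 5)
Votre preuve est correcte et suit essentiellement la m\^eme d\'emarche que celle du texte : la premi\`ere assertion repose sur la continuit\'e (uniforme en $A$, que vous explicitez utilement via la distance $d$ sur $\mathrm{Dec}(\cD)$ et l'in\'egalit\'e triangulaire) des fonctions $\sigma\mapsto\phi^\pm_\sigma(A)$, et la seconde sur le dictionnaire entre triangles distingu\'es \`a sommets dans le c\oe{}ur et suites exactes courtes. La seule diff\'erence est cosm\'etique : le texte caract\'erise les monomorphismes de $\cA_\tau$ par l'appartenance du c\^one de $f$ \`a $\cD_\tau^{\geq 0}$, l\`a o\`u vous faites d\'eg\'en\'erer la suite exacte longue du foncteur cohomologique $H^0_\tau$ --- ce sont deux formulations du m\^eme fait standard sur les t-structures.
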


\begin{proof}
La définition de la topologie de $\Stab(\cD)$ montre que les fonctions 
$$\sigma\mapsto \phi^-_\sigma(A), \phi^+_\sigma(A)$$
sont toutes les deux continues, ce qui montre le premier point.

Pour prouver le second point, remarquons que $f$ est un monomorphisme dans $\cA_{\sigma}$ si et seulement si le cône de $f$ est dans $\cD^{\geq 0}=\cP(\leq 1)$ --- on utilise ici la t-structure de la proposition \ref{proposition:t-structure} associée à $\cA_\sigma$. Dans ce cas, le cône de $f$ dans $\cD$ est isomorphe au conoyau $C$ de $f$ dans $\cA_\sigma$, ce qui permet de conclure.
\end{proof}

\begin{prop}\label{proposition:topologie-stable}
Soit $E$ un objet de $\cD$. L'ensemble des $\sigma\in \Stab(\cD)$ tel que $E$ est $\sigma$-semistable (resp. stable) est fermé (resp. ouvert) dans $\Stab(\cD)$.
\end{prop}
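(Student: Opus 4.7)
L'assertion de fermeture pour la semistabilit\'e d\'ecoule imm\'ediatement de la continuit\'e des fonctions $\sigma \mapsto \phi^\pm_\sigma(E)$ sur $\Stab(\cD)$ : par construction de la topologie et par la $1$-lipschitzianit\'e de $\phi^\pm$ pour la m\'etrique $d$, ces fonctions sont continues, et $E$ est $\sigma$-semistable si et seulement si $\phi^+_\sigma(E) = \phi^-_\sigma(E)$, condition ferm\'ee.

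Pour l'ouverture de la stabilit\'e, je d\'ecalerai d'abord $E$ de sorte que $\phi := \phi_\sigma(E) \in \mathopen]0,1\mathclose[$. Je choisirai, en combinant le lemme pr\'ec\'edent et le corollaire~\ref{corollaire:uniformite-support}, un voisinage $U$ de $\sigma$ et un r\'eel $\eta>0$ avec $[\phi-2\eta,\phi+2\eta]\subset \mathopen]0,1\mathclose[$, tels que, pour tout $\tau\in U$, on ait $E\in\cA_\tau$, $\phi^\pm_\tau(E)\in \mathopen]\phi-\eta,\phi+\eta\mathclose[$, et que la condition de support soit satisfaite avec une constante uniforme $C>0$.

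Je raisonnerai ensuite par l'absurde, supposant donn\'ee une suite $\tau_n\to\sigma$ dans $U$ avec $E$ non $\tau_n$-stable. Selon que $E$ est ou non $\tau_n$-semistable, la filtration de Harder-Narasimhan de $E$ pour $\tau_n$, ou une filtration de Jordan-H\"older dans $\cP_{\tau_n}(\phi_{\tau_n}(E))$, fournira un sous-objet propre non nul $\tau_n$-semistable $B_n\subset E$ dans $\cA_{\tau_n}$ v\'erifiant $\phi_{\tau_n}(B_n)\ge\arg Z_{\tau_n}(E)/\pi$. La suite exacte $0\to B_n\to E\to Q_n\to 0$ dans $\cA_{\tau_n}$ confinera alors les phases $\phi_{\tau_n}(B_n)$ et celles des facteurs HN de $Q_n$ dans $[\phi-\eta,\phi+\eta]$ (tout sous-objet $B$ v\'erifie $\phi^+_{\tau_n}(B)\le\phi^+_{\tau_n}(E)$, tout quotient $Q$ v\'erifie $\phi^-_{\tau_n}(Q)\ge\phi^-_{\tau_n}(E)$). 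L'in\'egalit\'e $\im Z_{\tau_n}(B_n)\le\im Z_{\tau_n}(E)$ combin\'ee \`a $\sin(\pi\phi')\ge\sin(\pi(\phi-\eta))>0$ pour $\phi'\in[\phi-\eta,\phi+\eta]$ donnera une borne uniforme sur $|Z_{\tau_n}(B_n)|$, d'o\`u par la condition de support uniforme une borne sur $\Vert v(B_n)\Vert$. Quitte \`a extraire, $v(B_n)=v_0\in\Lambda$ sera constant, et $v_0\ne v(E)$ puisque $Z_{\tau_n}(Q_n)\ne 0$.

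La conclusion s'obtiendra en remarquant que la $1$-lipschitzianit\'e des phases pour la m\'etrique $d$ donne $\phi^\pm_\sigma(B_n)\to \phi_0:=\arg Z_\sigma(v_0)/\pi\in[\phi-\eta,\phi+\eta]\subset\mathopen]0,1\mathclose[$, d'o\`u $B_n\in\cA_\sigma$ pour $n$ assez grand ; le m\^eme argument appliqu\'e \`a $Q_n$ montre que $Q_n\in\cA_\sigma$, et le triangle $B_n\to E\to Q_n$ devient une suite exacte courte dans $\cA_\sigma$ qui exhibe $B_n$ comme sous-objet propre non nul de $E$. La $\sigma$-stabilit\'e de $E$ impose alors $\phi_0<\phi$, tandis que le passage \`a la limite dans $\phi_{\tau_n}(B_n)\ge\arg Z_{\tau_n}(E)/\pi\to\phi$ donne $\phi_0\ge\phi$, contradiction. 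L'\'etape d\'elicate sera l'obtention de la borne uniforme sur $|Z_{\tau_n}(B_n)|$ : elle repose simultan\'ement sur le confinement des phases des $B_n$ et des facteurs HN de $Q_n$ dans $[\phi-\eta,\phi+\eta]\subset\mathopen]0,1\mathclose[$ et sur l'uniformit\'e de la condition de support fournie par le corollaire~\ref{corollaire:uniformite-support}.
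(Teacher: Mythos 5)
Votre preuve est correcte et repose sur les m\^emes ingr\'edients que celle du texte : continuit\'e (lipschitzianit\'e) de $\sigma\mapsto\phi^{\pm}_\sigma(E)$ pour la fermeture, puis, pour l'ouverture, confinement des phases des sous-objets d\'estabilisants dans un compact de $\mathopen]0,1\mathclose[$, borne sur $|Z_\tau(B)|$ via $\mathrm{Im}\,Z_\tau(B)\le \mathrm{Im}\,Z_\tau(E)$, et le corollaire~\ref{corollaire:uniformite-support} pour en d\'eduire la finitude des classes $v(B)$ possibles. La diff\'erence est dans l'organisation de l'argument : vous extrayez une sous-suite \`a classe $v_0$ constante et transf\'erez le sous-objet $B_n$ dans $\cA_\sigma$ \`a la limite, tandis que le texte d\'ecrit explicitement le lieu d'instabilit\'e dans un voisinage $U$ comme r\'eunion \emph{finie} de ferm\'es $F_w$ d'\'equation $\mathrm{Im}(Z_\tau(w)/Z_\tau(v))\ge 0$, en propageant un m\^eme monomorphisme $B\ra E$ \`a tous les $\tau'\in F_w$. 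Votre version suffit pour l'ouverture (l'espace $\Stab(\cD)$ est m\'etrisable, donc l'argument s\'equentiel est l\'egitime), mais la version du texte donne davantage : c'est elle qui fournit directement la structure locale de murs et de chambres de la proposition~\ref{proposition:murs-chambres}, r\'eutilis\'ee dans toute la suite.

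Deux points mineurs \`a rectifier. D'abord, un d\'ecalage $E\mapsto E[n]$ ne suffit pas \`a placer la phase dans $\mathopen]0,1\mathclose[$ lorsque $Z_\sigma(E)\in\R^*_{-}$ ; il faut faire agir $\widetilde{GL}_2^+(\R)$ (une rotation), comme le fait le texte --- c'est anodin puisque cette action pr\'eserve stabilit\'e et semistabilit\'e. Ensuite, votre parenth\`ese ne justifie que la minoration $\phi^-_{\tau_n}(Q_n)\ge\phi^-_{\tau_n}(E)$ : la majoration $\phi^+_{\tau_n}(Q)\le\phi^+_{\tau_n}(E)$ est fausse pour un quotient arbitraire $Q$ de $E$ (penser \`a la surjection $\cO^2\ra\cO(1)$ sur $\PP^1$ issue de la suite d'Euler), mais elle vaut bien pour vos $Q_n$ particuliers, dont les facteurs semistables sont parmi les facteurs de Harder-Narasimhan de $E$ (ou qui vivent dans $\cP_{\tau_n}(\phi_{\tau_n}(E))$ dans le cas Jordan-H\"older) ; il faut le dire, car c'est ce qui garantit $Q_n\in\cA_\sigma$ et donc que le triangle $B_n\ra E\ra Q_n$ devient une suite exacte courte dans $\cA_\sigma$.
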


\begin{proof}
Les fonctions $\sigma\mapsto \phi^-_\sigma(A), \phi^+_\sigma(A)$ sont toutes les deux continues. Par ailleurs, $A$ est $\sigma$-semistable si et seulement si $\phi^-_\sigma(A)=\phi^+_\sigma(A)$. Cela montre que l'ensemble des $\sigma$ tel que $A$ est $\sigma$-semistable est fermé dans $\Stab(\cD)$. 

Soit $\sigma=(\cP, Z)$ une condition de stabilité sur~$\cD$, et soit $A$ un objet $\sigma$-stable de $\cD$.
Supposons pour fixer les idées --- quitte à faire agir $\widetilde{GL}_2^+(\R)$ --- que la phase~$\phi_\sigma(A)$ est~$1/2$. Soit $v=v(A)\in \Lambda$. Soit $\cA$ le c\oe{}ur de $\cP$, et soit $U$ un voisinage ouvert de $\sigma$ dans $\Stab(\cD)$ tel que pour tout $\tau=(\cQ, Z')\in U$, $A$ est un objet de $\cQ(\mathopen]1/3, 2/3\mathclose[)$. 

Soit $W$ l'ensemble des éléments $w$ de $\Lambda$ tels que
$$\exists (\cQ, Z_\tau)\in U, \quad 0\leq \mathrm{Im} Z_\tau(w)\leq \mathrm{Im} Z_\tau(v)$$
et 
$$w\in \R^*_+e^{i\pi\phi},$$ 
avec $\phi\in \mathopen]1/3, 2/3\mathclose[.$
Quitte à retrécir $U$, on peut supposer que l'ensemble des $|Z(w)|,$ \mbox{$w\in W$}, est borné. Soit $W_d$ l'ensemble des $w\in W$ tels qu'il existe $\tau=(\cQ, Z_\tau)\in U,$ et un monomorphisme $B\ra A$ d'objets de $\cQ(\mathopen]1/3, 2/3\mathclose[)$, vus comme objets de $\cA_\tau$, de conoyau dans $\cQ(\mathopen]1/3, 2/3\mathclose[)$, avec $B$ $\tau$-semistable et $v(B)=w$. Le corollaire \ref{corollaire:uniformite-support} garantit que $W_d$ est un ensemble fini, quitte à encore retrécir $U$.

Soit $\tau=(\cQ, Z_\tau)\in U$, et supposons que $A$ n'est pas $\tau$-stable. Alors on peut trouver \mbox{$w\in W_d$,} avec $\mathrm{Im}(Z_\tau(w)/ Z_\tau(v))\geq 0$, un monomorphisme $B\ra A$ d'objets de $\cQ(\mathopen]1/3, 2/3\mathclose[)$, vus comme objets de $\cA_\tau$, de conoyau dans $\cQ(\mathopen]1/3, 2/3\mathclose[)$, avec $B$ \mbox{$\tau$-semistable} et \mbox{$v(B)=w$} --- prendre $B$ maximal semistable de phase $\phi^+_\tau(A)$.

Soit maintenant $\tau'=(\cQ', Z_{\tau'})\in U$ tel que $\mathrm{Im}(Z_{\tau'}(w)/Z_{\tau'}(v))\geq 0$ --- c'est une condition fermée dans $U$. Le lemme précédent montre que pour tout tel $\tau'$, le morphisme $B\ra A$ est un monomorphisme dans $\cA_{\tau'}$. On a par ailleurs $\phi_{\tau'}(B)\geq \phi_{\tau'}(A)$, donc $A$ n'est pas $\tau$-stable.

Finalement, pour chaque $w\in W_d$, on a construit un fermé $F_w$ de $U$ (vide ou défini par l'équation $\mathrm{Im}(Z_\tau(w)/Z_\tau(v))\geq 0$) de telle sorte que l'ensemble des $\tau\in U$ tels que $A$ n'est pas $\tau$-stable est la réunion des $F_d$. Cela montre bien que l'ensemble des $\tau\in U$ tels que $A$ est $\tau$-stable est ouvert.
\end{proof}

La proposition précédente donne en fait une description précise du lieu où un objet $A$ peut être déstabilisé. Reprenant l'exact argument de la preuve, nous obtenons le résultat suivant, dû essentiellement à Bridgeland \cite{Bridgeland08} et Toda \cite{Toda08}. Nous reprenons essentiellement les énoncés de \cite[Proposition 3.3]{BayerMacri11} et \cite[Proposition 2.3]{BayerMacri14proj}.

\begin{prop}\label{proposition:murs-chambres}
Soit $v$ un élément de $\Lambda$. Il existe un ensemble $\cW$ de sous-variétés à bord de codimension $1$ dans $\Stab(\cD)$, appelées \emph{murs}, tels que, appelant \emph{chambres} les composantes connexes du complémentaire de l'ensemble des $W\in\cW$, on ait les propriétés suivantes : 
\begin{enumerate}
\item $\cW$ est une famille localement finie : tout compact de $\Stab(\cD)$ n'intersecte qu'un nombre fini de murs;
\item Soit $W\in\cW$. On peut trouver $w\in\Lambda$ non proportionnel à $v$ tel que $W$ est l'adhérence d'un ouvert de $\{\sigma=(\cP, Z)\in \Stab(\cD), Z(w)/ Z(v)\in\R^*\}$. De plus, pour tout $\sigma=(\cP, Z)\in W$, on peut trouver $\phi\in\R$, et un monomorphisme $B\ra A$ dans $\cP(\phi)$, tels que $v(A)=v$ et $v(B)=w$;
\item Si $C$ est une chambre, $A$ un objet de $\cD$, et $\sigma, \tau\in C$ (resp. $\overline C$), alors $A$ est $\sigma$-stable (resp. semistable) si et seulement si $A$ est $\tau$-stable (resp. semistable);
\item Si $W$ est un mur, il existe un objet $A$ de $\cD$ qui est semistable sur $W$, et instable dans une des chambres adjacentes\footnote{i.e., dont l'adhérence intersecte $W$.} à $W$. 
\end{enumerate}
De plus, si $(W_i)_{i\in I}$ est un ensemble fini de murs, la famille des $W\cap \bigcap_{i\in I}W_i, W\in \cW$, définit une famille de murs et de chambres dans $\bigcap_{i\in I}W_i$ satisfaisant les propriétés précédentes.
\end{prop}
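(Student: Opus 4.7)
The plan is to localise at a single point of $\Stab(\cD)$ and extract the wall--chamber structure from the argument already carried out in the proof of Proposition~\ref{proposition:topologie-stable}. Fix $\sigma_0=(\cP_0,Z_0)\in\Stab(\cD)$; after acting by $\widetilde{GL}_2^+(\R)$ one may assume that any $\sigma_0$-semistable object of class $v$ has phase $1/2$. Using the continuity of $\phi^\pm_\sigma$ one finds an open neighbourhood $U$ of $\sigma_0$ such that every object $A$ with $v(A)=v$ which is $\tau$-semistable for some $\tau=(\cQ,Z_\tau)\in U$ lies in $\cQ(\mathopen]1/3,2/3\mathclose[)$. Shrinking $U$, the corollaire~\ref{corollaire:uniformite-support} gives a constant $C>0$ with $|Z_\tau(B)|\geq C\Vert v(B)\Vert$ for every $\tau\in U$ and every $\tau$-semistable $B$.

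Next, one replicates the reasoning in the proof of Proposition~\ref{proposition:topologie-stable}. A would-be destabilising subobject $B\hra A$ of an $A$ of class $v$, occurring in some heart $\cA_\tau$ with $B$ and the cokernel both in $\cQ(\mathopen]1/3,2/3\mathclose[)$, satisfies $0\leq \mathrm{Im}\,Z_\tau(B)\leq \mathrm{Im}\,Z_\tau(A)$. Combined with the uniform bound on $|Z_\tau(A)|$ over $U$ and the support estimate above, this bounds $\Vert v(B)\Vert$; hence the set $W_d\subset\Lambda$ of all classes $w=v(B)$ that actually arise in this way for some $\tau\in U$ is finite. For each $w\in W_d$ non proportionnel � $v$, l'�quation $Z(w)/Z(v)\in\R$ d�coupe dans $U$ un sous-ensemble localement ferm� $W_w$ de codimension r�elle un. On d�finit alors les murs dans $U$ comme �tant les adh�rences des composantes de $W_w\setminus\{Z(v)=0\}$ r�ellement r�alis�es par un monomorphisme $B\to A$ de classes $w$ et $v$ avec $B$ et $A/B$ semistables sur le mur. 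Comme $\Stab(\cD)$ admet une base locale de tels $U$, on obtient la famille localement finie $\cW$ voulue.

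Les propri�t�s (1) et (2) sont imm�diates par construction. Pour (3), on suppose que $\sigma,\tau$ sont dans la m�me chambre $C\subset U$ et que $A$ est $\sigma$-stable (resp. semistable) de classe $v$. La liste finie $W_d$ contr�le toute classe $w$ susceptible de destabiliser $A$, et chaque condition $\mathrm{Im}(Z(w)/Z(v))\geq 0$ susceptible de produire une d�stabilisation est un demi-espace ferm� bord� par l'un des $W_w$~; comme $C$ ne rencontre aucun mur, aucun de ces demi-espaces ne change de signe entre $\sigma$ et $\tau$, et le m�me argument de monomorphisme que dans la preuve de la proposition~\ref{proposition:topologie-stable} montre que la stabilit� est pr�serv�e. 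Le cas semistable sur $\overline C$ utilise en plus que la semistabilit� est une condition ferm�e, d'apr�s cette m�me proposition.

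Le principal obstacle est la propri�t� (4)~: montrer que chaque mur $W$ produit ci-dessus est bien un mur effectif pour au moins un objet. Par construction, $W$ provient d'un monomorphisme $B_0\to A_0$ dans un certain $\cA_{\tau_0}$, $\tau_0\in W$, avec $\phi_{\tau_0}(B_0)=\phi_{\tau_0}(A_0)$. Les deux chambres adjacentes sont distingu�es par le signe de $\mathrm{Im}(Z(w)/Z(v))$~; sur l'une d'elles, ce signe force $\phi_\tau(B_0)>\phi_\tau(A_0)$, donc $A_0$ y est $\tau$-instable. L'assertion finale concernant les intersections $\bigcap_iW_i$ s'obtient en rejouant l'argument � l'int�rieur de cette intersection, qui h�rite de l'estim�e de support uniforme du corollaire~\ref{corollaire:uniformite-support}~; la seule subtilit� est de v�rifier qu'un mur dans l'intersection correspond bien � une relation de d�stabilisation effective et non � un lieu lin�aire parasite, ce qui est assur� par la s�lection ci-dessus des composantes de $W_w$ effectivement r�alis�es.
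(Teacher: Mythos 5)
Votre preuve suit exactement la d�marche du texte, qui se contente d'indiquer que la proposition s'obtient � en reprenant l'exact argument de la preuve � de la proposition~\ref{proposition:topologie-stable} : localisation en un point, finitude de l'ensemble $W_d$ des classes d�stabilisantes potentielles via le corollaire~\ref{corollaire:uniformite-support}, et d�finition des murs comme adh�rences des lieux $Z(w)/Z(v)\in\R^*$ effectivement r�alis�s. Votre r�daction explicite correctement ce que le texte laisse implicite (notamment la v�rification des propri�t�s (3) et (4) par constance du signe de $\mathrm{Im}(Z(w)/Z(v))$ sur chaque chambre), sans s'en �carter.
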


\begin{rema}
L'action de $\widetilde{GL}_2^+(\R)$ laisse invariants les murs --- et les chambres --- décrits ci-dessus. En effet, elle préserve l'ensemble des catégories $\cP(\phi), \phi\in\R$, donc les ensembles d'objets stables et semistables.
\end{rema}

\bigskip

Dans la situation précédente, une \emph{strate} est une composante connexe de l'intersection d'un ensemble fini de murs. Voici une remarque utile, dont la démonstration est évidente.

\begin{prop}\label{proposition:rationnel-dense}
Pour tout $v\in\Lambda$, l'ensemble des conditions de stabilité rationnelles est dense dans chaque strate de la décomposition en murs et chambres de $\Stab(\cD)$ associée à $v$.
\end{prop}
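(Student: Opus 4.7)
My plan is to use the local biholomorphism property of $q : \Stab(\cD) \to \Hom(\Lambda, \C)$ established in Theorem~\ref{theoreme:def-Bridgeland} to reduce the question to a density statement purely about central charges. Given $\sigma = (\cP, Z_0)$ in a stratum $S$, I first invoke the local finiteness of walls (Proposition~\ref{proposition:murs-chambres}(1)) to restrict to an open neighborhood $U$ of $\sigma$ meeting only finitely many walls $W_1, \dots, W_N$, with $W_1, \dots, W_k$ those containing $\sigma$; shrinking $U$, the stratum $S$ in $U$ is the connected component of $\sigma$ in $(W_1 \cap \cdots \cap W_k) \cap U$ deprived of the remaining walls. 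It then suffices, by the local homeomorphism, to find a rational central charge $Z' \in \Hom(\Lambda, \Q[i])$ arbitrarily close to $Z_0$ whose unique lift $\sigma' \in \Stab(\cD)$ belongs to $S$.

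The second step is to translate the membership condition into a linear-algebraic one via Proposition~\ref{proposition:murs-chambres}(2): for each $j \leq k$ there exists $w_j \in \Lambda$ not proportional to $v$ such that $W_j$ is locally contained in $\{Z : Z(w_j)/Z(v) \in \R^*\}$. Setting $\lambda_j := Z_0(w_j)/Z_0(v) \in \R^*$, I approximate each $\lambda_j$ by a rational $\lambda'_j = p_j/q_j \in \Q^*$ and introduce the $\C$-linear subspace
\[
H' = \{Z \in \Hom(\Lambda_\R, \C) : Z(q_j w_j - p_j v) = 0 \text{ for all } j \leq k\},
\]
which is defined over $\Q$ since the elements $q_j w_j - p_j v$ lie in $\Lambda$. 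Consequently $H' \cap \Hom(\Lambda, \Q[i])$ is dense in $H'$. Any $Z' \in H' \cap \Hom(\Lambda, \Q[i])$ satisfies $Z'(w_j)/Z'(v) = \lambda'_j \in \Q^*$, so its lift $\sigma'$ lies on each $W_j$, is rational, and (by continuity, for $Z'$ close enough to $Z_0$) avoids the walls $W_{k+1}, \dots, W_N$; hence $\sigma' \in S$.

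The computation tying the two steps together is
\[
Z_0(q_j w_j - p_j v) = q_j(\lambda_j - \lambda'_j) Z_0(v),
\]
which is arbitrarily small for $\lambda'_j$ close to $\lambda_j$. This says that $Z_0$ is arbitrarily close to the affine condition defining $H'$; assuming $H'$ has constant codimension $k$ as $\lambda'$ varies near $(\lambda_j)$, the subspaces $H'$ vary continuously in $\lambda'$, so one can find a point of $H'$ arbitrarily close to $Z_0$, and then perturb it within $H'$ to a $\Q[i]$-point by density.

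The main obstacle is the transversality/continuity point just raised: one must ensure that the linear forms $Z \mapsto Z(q_j w_j - p_j v)$ remain linearly independent in a neighborhood of $(\lambda_j)$, equivalently that the walls $W_1, \dots, W_k$ meet at $\sigma$ without redundancy. If some linear combination of these forms vanished on $Z_0$, a smaller subset of walls would already contain the stratum near $\sigma$, and one could run the argument with that subset; so this reduction is harmless, but verifying it cleanly is the only non-formal part of the proof.
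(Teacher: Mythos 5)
The paper offers no proof of this proposition (it simply declares the statement \emph{\'evidente}), and your argument is exactly the natural one it has in mind: transport the question through the local homeomorphism $q$ of Theorem~\ref{theoreme:def-Bridgeland}, observe via Proposition~\ref{proposition:murs-chambres}(2) that each wall through $\sigma$ is locally cut out by the rationally-defined linear condition $Z(w_j)=\lambda_j Z(v)$, and approximate the real parameters $\lambda_j$ by rationals so that the resulting linear subspace is defined over $\Q$ and hence has dense $\Q[i]$-points. Your proof is correct; the one point requiring care --- choosing the $\lambda'_j$ compatibly with the $\Q$-linear relations among $v, w_1, \dots, w_k$, which amounts to fixing them freely only on a subset $\{w_{i_1},\dots,w_{i_d}\}$ completing $v$ into a basis of $\mathrm{span}_\Q(v,w_1,\dots,w_k)$ and letting the affine rational relations determine the rest --- is precisely the one you flag at the end, and your proposed reduction (the redundant walls locally contain the intersection of the independent ones, so they impose no new condition) disposes of it.
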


%
%
%
%

\section{Exemples}

Dans la suite de ces notes, nous allons appliquer la théorie générale décrite ci-dessus à des situations venant de la géométrie. 

Soit $X$ une variété complexe. On note $D^b(X)$ la catégorie dérivée bornée de la catégorie abélienne $\mathrm{Coh}(X)$ des faisceaux cohérents sur $X$.

Supposons $X$ projective et lisse. Si $A$ et $B$ sont deux objets de $D^b(X)$, alors pour tout entier $i$, l'espace $\Hom(A, B[i])$ est de dimension finie sur $\C$, et il est nul si $i$ est assez grand en valeur absolue. La caractéristique d'Euler
$$\chi(A, B) : = \sum_{i\in\Z} (-1)^i\dim \Hom(A, B[i])$$
est donc bien définie. Elle induit une forme bilinéaire sur le groupe de Grothendieck $K(X)=K_0(D^b(X)).$

Disons que $A$ et $A'$ sont \emph{numériquement équivalents}, et notons $A\sim A'$ si pour tout objet $B$ de $D^b(X)$, on a 
$$\chi(A, B)=\chi(A', B).$$
La relation $\sim$ est bien une relation d'équivalence : \emph{l'équivalence numérique}. Le théorème de dualité de Serre montre que la forme bilinéaire $\chi$ descend en une forme bilinéaire non dégénérée sur $K_{\mathrm{num}}(X):=K(X)/\sim$.

On sait --- c'est une conséquence du théorème de Hirzebruch-Riemann-Roch --- que le groupe $K_{\mathrm{num}}(X)$ est un groupe abélien libre de type fini. On le notera $\Lambda$. 

\begin{defi}
Une condition de stabilité numérique sur $X$ est une condition de stabilité sur $D^b(X)$ qui se factorise par la flèche naturelle $K_0(X)\ra K_{\mathrm{\mathrm{num}}}(X)$. On note $\Stab(X)$ l'espace des conditions de stabilité numériques sur $X$.
\end{defi}

Si $X$ est une courbe irréductible, la flèche 
$$K(X)\ra \Z^2, [A]\mapsto(-\deg A, \mathrm{rg}(A))$$
identifie $K_{\mathrm{num}}(X)$ à $\Z^2$.

Si $X$ est une surface irréductible, notons $\NS(X)$ le groupe des fibrés en droites sur $X$ modulo équivalence numérique. Alors $K_{\mathrm{num}}(X)\otimes\Q$ s'identifie à $\Q\oplus \NS(X)\oplus \Q$ via $E\mapsto \ch(E)$.

\bigskip

{\bf Dans la suite de ce texte, une condition de stabilité sera toujours une condition de stabilité numérique.}

\subsection{Courbes}

Le cas des courbes est parfaitement compris. Soit $X$ une courbe lisse, projective, irréductible de genre~$g$. On dispose sur la catégorie abélienne d'une fonction de stabilité 
$$Z : \mathrm{Coh}(X)\ra \C, A\mapsto -\deg A+i\,\mathrm{rg}(A).$$

L'existence des filtrations de Harder-Narasimhan pour $Z$ est classique. Par ailleurs, toute forme quadratique définie positive sur $\R^2=K_{\mathrm{num}}(X)_\R$ satisfait manifestement la condition de support pour $(\mathrm{Coh}(X), Z)$, puisque $Z$ est une injection. On a donc montré que $\sigma=(\mathrm{Coh}(X), Z)$ est une condition de stabilité sur $X$.

\bigskip

Soit $\Stab^0(X)$ la composante connexe de $\Stab(X)$ contenant la condition de stabilité~$\sigma$. Soit $U$ la composante connexe contenant $Z$ de l'ouvert de $\Hom(K(X), \C)$ constituée des applications $Z'$ telles que $Z'_\R : \R^2\ra \C$ est une injection. Alors $U$ est l'orbite de $Z$ sous l'action de $\mathrm{GL}_2^+(\R)$.

D'après le théorème \ref{theoreme:def-Bridgeland}, la préimage $V$ de $U$ dans $\Stab^0(X)$ est un revêtement de $U$. L'application $q : V\ra U$ est $\widetilde{GL}_2^+(\R)$-équivariante, et l'on vérifie immédiatement que l'action de $\widetilde{GL}_2^+(\R)$ sur $\Stab^0(X)$ est libre. $V$ est donc l'orbite de $\sigma$ sous $\widetilde{GL}_2^+(\R)$. En particulier, tout objet $\sigma$-semistable est $\sigma'$-semistable pour tout $\sigma'\in V$.

Supposons un moment que l'inclusion de $V$ dans $\Stab^0(X)$ soit stricte. Alors on peut trouver une condition de stabilité $\tau=(\cA, Z')$ sur $X$, appartenant au bord de $V$, dont la charge centrale $Z'$ n'est pas injective. Soit $Q$ une forme quadratique sur $\R^2=K(X)_\R$ qui satisfait la condition de support par rapport à $\sigma$. Alors $Q$ est strictement négative sur un cône ouvert de $\R^2$, et positive ou nulle sur tout $v\in\Z^2$ qui est la classe d'un objet $\sigma$-semistable. La fermeture de la semistabilité prouvée dans la proposition \ref{proposition:topologie-stable} et le paragraphe précédent montrent que tout objet $\sigma$-semistable est $\tau$-semistable.

Si le genre $g$ de $X$ est au moins $0$, i.e. si $X$ n'est pas isomorphe à $\mathbb P^1_\C$, on sait que tout couple $(-d, r)$ avec $r>0$ est de la forme $(-\deg E, \mathrm{rg} E)$, avec $E$ $\sigma$-semistable sur $X$. En particulier, tout cône ouvert de $\R^2$ rencontre l'ensemble des $v(E)$, où $E$ parcourt les faisceaux localement libres $\sigma$-semistables sur $X$. 

L'espace $\Stab^0(X)$ est donc un revêtement de $U\simeq GL_2^+(\R)$. L'application de revêtement correspondante 
$$\Stab^0(X)\ra GL_2^+(\R)$$
est $\widetilde{GL}_2^+(\R)$-équivariante, et l'on vérifie immédiatement que l'action de $\widetilde{GL}_2^+(\R)$ sur $\Stab^0(X)$ est libre. Il en résulte le résultat suivant.

\begin{prop}\label{proposition:stab-courbe}
Soit $X$ une courbe irréductible de genre positif ou nul, et soit $\Stab^0(X)$ la composante de $\Stab(X)$ contenant la condition de stabilité $\sigma$ définie ci-dessus. Alors $\Stab^0(X)$ est un espace principal homogène sous $\widetilde{\mathrm{GL}}_2^+(\R)$. Il est biholomorphe à $\C\times\mathbb H$.
\end{prop}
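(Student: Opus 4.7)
Mon plan consiste � identifier $V=\Stab^0(X)$ comme l'unique orbite libre de $\sigma$ sous $\widetilde{GL}_2^+(\R)$ au-dessus de la composante $U\subset\Hom(\Lambda, \C)\simeq\C^2$ des charges centrales injectives contenant $Z$. Apr�s identification de $\Lambda=K_\mathrm{num}(X)\simeq\Z^2$ via $[E]\mapsto(-\deg E, \mathrm{rg}\,E)$, l'ouvert $U$ est l'une des deux composantes de $\{(z_1, z_2)\in\C^2 : \mathrm{Im}(z_2/z_1)\neq 0\}$, biholomorphe � $\C^*\times\H$ via $(z_1, z_2)\mapsto(z_1, z_2/z_1)$. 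Son rev�tement universel est donc $\C\times\H$ ; comme $\widetilde{GL}_2^+(\R)$ est simplement connexe et rev�t $GL_2^+(\R)\simeq U$, il s'identifiera � $\C\times\H$ comme vari�t� complexe.

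J'appliquerais ensuite le th�or�me \ref{theoreme:def-Bridgeland} � $\sigma=(\mathrm{Coh}(X), Z)$ avec une forme quadratique $Q$ positive d�finie sur $\Lambda_\R$ : la condition de support pour $\sigma$ est triviale puisque $Z$ est injective, et la conclusion identifie la composante $V\subset q^{-1}(U)$ contenant $\sigma$ comme un rev�tement de $U$. L'action de $\widetilde{GL}_2^+(\R)$ est ostensiblement libre --- l'�galit� $(a, g).\sigma=\sigma$ entra�ne $Z\circ g^{-1}=Z$, donc $g=1$, puis $a=\mathrm{id}$ via $\cP\circ a=\cP$ --- et $q$-�quivariante. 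L'orbite $\widetilde{GL}_2^+(\R).\sigma$ contenue dans $V$ est donc un rev�tement connexe de $U$ isomorphe au rev�tement universel : elle co�ncide avec $V$, qui h�rite ainsi d'une structure d'espace principal homog�ne sous $\widetilde{GL}_2^+(\R)$, biholomorphe � $\C\times\H$.

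Le c\oe ur --- et la difficult� principale --- de la preuve sera de montrer l'�galit� $V=\Stab^0(X)$. Je raisonnerais par l'absurde : sinon, $V$ �tant ouvert dans $\Stab^0(X)$, il existerait $\tau=(\cA', Z')\in\Stab^0(X)$ adh�rent � $V$ mais en dehors, et $Z'$ serait n�cessairement non injective (sans quoi $Z'$ appartiendrait � $U$ et $\tau$ � $V$ par connexit�). Soit alors $Q'$ une forme quadratique pour laquelle $\tau$ satisfait la condition de support : $Q'$ est strictement n�gative sur la droite $\ker Z'_\R$, donc sur tout un c�ne ouvert de $\Lambda_\R$. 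Comme $\widetilde{GL}_2^+(\R)$ pr�serve la semistabilit� et que celle-ci est une condition ferm�e sur $\Stab(X)$ (proposition \ref{proposition:topologie-stable}), tout objet $\sigma$-semistable reste $\tau$-semistable, d'o� $Q'(v(E))\geq 0$ pour tout tel $E$. Or, pour $g\geq 1$, il est classique que toute classe $(-d, r)\in\Z^2$ avec $r\geq 1$ soit repr�sent�e par un fibr� vectoriel $\sigma$-semistable ; ces classes rencontrent donc tout c�ne ouvert de $\R^2$, en particulier celui o� $Q'<0$, ce qui fournit la contradiction cherch�e.
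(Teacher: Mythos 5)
Votre d\'emonstration est correcte et suit essentiellement la m\^eme voie que le texte : identification de $V=q^{-1}(U)\cap\Stab^0(X)$ \`a l'orbite libre de $\sigma$ sous $\widetilde{GL}_2^+(\R)$ via le th\'eor\`eme \ref{theoreme:def-Bridgeland}, puis preuve de l'\'egalit\'e $V=\Stab^0(X)$ par l'absurde en combinant la condition de support en un point adh\'erent \`a charge centrale non injective, la fermeture de la semistabilit\'e (proposition \ref{proposition:topologie-stable}) et l'existence de fibr\'es semistables de classe $(-d,r)$ arbitraire avec $r>0$. Votre restriction explicite \`a $g\geq 1$ pour ce dernier point d'existence est d'ailleurs la bonne lecture de l'\'enonc\'e, qui exclut de fait le cas de $\mathbb P^1_\C$, trait\'e s\'epar\'ement dans le th\'eor\`eme suivant.
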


Il est en fait possible de prouver le résultat suivant, plus fort, \cite{Bridgeland07, Macri07}, qui montre en outre que $\Stab(X)$ est connexe.

\begin{theo}
Soit $X$ une courbe irréductible de genre positif ou nul. Alors $\Stab(X)$ est un espace principal homogène sous $\widetilde{\mathrm{GL}}_2^+(\R)$. On a de plus $\Stab(\mathbb P^1_\C)\simeq \C^2.$
\end{theo}

\subsection{Basculement}

Dans la suite de ce texte, nous allons concentrer notre attention sur la catégorie $D^b(X)$, où $X$ est de dimension $2$. Il est immédiat de remarquer que la stabilité via la pente ne peut pas donner de condition de stabilité de c\oe{}ur $\mathrm{Coh}(X)$ : les faisceaux supportés sur les points fermés seraient dans le noyau de la charge centrale. Plus généralement, il n'est pas très difficile de montrer que $\mathrm{Coh}(X)$ n'est le c\oe{}ur d'aucune condition de stabilité numérique sur $D^b(X)$.

Dans ce qui suit, nous montrons comment construire des catégories abéliennes intéressantes dans $D^b(X)$ par un procédé dit de \emph{basculement}. Cette construction est due à Happel-Reiten-Smal\o~\cite{HRS96}.

\begin{defi}
Soit $\cA$ une catégorie abélienne. Un couple de torsion sur $\cA$ est un couple de sous-catégories additives pleines $(\cF, \cT)$ de $\cA$ qui satisfait les deux propriétés suivantes :
\begin{enumerate}
\item Pour tous objets $F$ de $\cF$ et $T$ de $\cT$, on a $\Hom(T, F)=0$;
\item Pour tout objet $E$ de $\cA$, on peut trouver des objets $F$ de $\cF$ et $T$ de $\cT$, ainsi qu'une suite exacte
$$0\ra T\ra E\ra F\ra 0.$$
\end{enumerate}
\end{defi}

\begin{rema}\label{remarque:stabilite-quotient}
La propriété d'annulation garantit l'unicité de la suite exacte en 2. 

Par ailleurs, si $T$ est un objet de $\cT$ et $T\ra Q$ un quotient de $T$, la propriété 2 fournit une suite exacte 
$$0\ra T'\ra Q\ra F'\ra 0,$$
dans laquelle $T'$ et $F'$ sont des objets de $\cT$ et $\cF$ respectivement. La composition $T\ra Q\ra F'$ est un épimorphisme, nul par la propriété d'annulation, donc $F'$ est nul et $Q$ est un objet de $\cT$. De même, tout sous-objet d'un objet de $\cF$ est un objet de $\cF$.
\end{rema}

Il est agréable de se souvenir de la définition en pensant à l'exemple de $\mathrm{Coh}(X)$, où $X$ est une variété algébrique, $\cF$ est la sous-catégorie des faisceaux sans torsion, et $\cT$ la catégorie des faisceaux de torsion.

\bigskip

Ce qui suit n'est pas difficile, mais c'est un point clé.

\begin{prop}\label{proposition:tilt}
Soit $\cA$ une catégorie abélienne, c\oe{}ur d'une t-structure bornée sur une catégorie triangulée $\cD$, et soit $(\cF, \cT)$ un couple de torsion sur $\cA$. 

Soit $\cA^\sharp$ la sous-catégorie pleine de $\cD$ dont les objets sont les objets $E$ de $\cD$ tels que $H^{-1}(E)$ est un objet de $\cF$, $H^0(E)$ est un objet de $\cT$, et $H^i(E)=0$ si $i\neq -1, 0$, où les~$H^i$ sont calculés pour la t-structure de c\oe{}ur $\cA$.

Alors $\cA^\sharp$ est le c\oe{}ur d'une t-structure bornée sur $\cA$.
\end{prop}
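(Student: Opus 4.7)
Le plan est de définir explicitement la t-structure candidate et d'en vérifier les axiomes. Posons
$$\cD^{\sharp,\leq 0}=\{E\in\cD\,:\,H^i(E)=0 \text{ pour } i\geq 1,\ H^0(E)\in \cT\}$$
et
$$\cD^{\sharp,\geq 0}=\{E\in\cD\,:\,H^i(E)=0 \text{ pour } i\leq -2,\ H^{-1}(E)\in \cF\},$$
les $H^i$ étant calculés relativement à la t-structure de cœur $\cA$. Par construction $\cA^\sharp=\cD^{\sharp,\leq 0}\cap \cD^{\sharp,\geq 0}$, et l'inclusion $\cD^{\sharp,\leq 0}[1]\subset \cD^{\sharp,\leq 0}$ se vérifie aussitôt sur les cohomologies.

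L'orthogonalité $\Hom(E, E')=0$ pour $E\in\cD^{\sharp,\leq 0}$ et $E'\in\cD^{\sharp,\geq 1}$ se ramène, en appliquant les foncteurs de troncation $\tau^{\leq 0}, \tau^{\geq 0}$ associés à $\cA$ et en utilisant l'annulation standard $\Hom(\cD^{\leq a}, \cD^{\geq b})=0$ pour $a<b$, au calcul de $\Hom_\cA(H^0(E), H^0(E'))$. Comme $H^0(E)\in \cT$ et $H^0(E')\in \cF$, cet espace s'annule par définition du couple de torsion.

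L'étape cruciale --- et le principal obstacle attendu --- est la construction, pour tout $E\in\cD$, d'un triangle distingué $A\ra E\ra B\ra A[1]$ avec $A\in\cD^{\sharp,\leq 0}$ et $B\in\cD^{\sharp,\geq 1}$. La décomposition de $H^0(E)\in\cA$ par le couple de torsion fournit une suite exacte courte $0\ra T\ra H^0(E)\ra F\ra 0$ avec $T\in\cT$ et $F\in\cF$, d'où un morphisme composé $\tau^{\leq 0}E\ra H^0(E)\ra F$ dans $\cD$. Je définirai $A$ comme sa fibre, via un triangle $A\ra\tau^{\leq 0}E\ra F\ra A[1]$. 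L'axiome de l'octaèdre appliqué à la composition $A\ra\tau^{\leq 0}E\ra E$ fournira alors un triangle $A\ra E\ra B\ra A[1]$, ainsi qu'un triangle auxiliaire $F\ra B\ra \tau^{\geq 1}E\ra F[1]$ reliant $B$ à $F$ et à $\tau^{\geq 1}E$. Les suites exactes longues de cohomologie associées à ces deux triangles donneront $H^0(A)=T\in\cT$ et $H^i(A)=0$ pour $i\geq 1$, d'une part, puis $H^0(B)=F\in\cF$ et $H^i(B)=0$ pour $i\leq -1$, d'autre part, soit $A\in \cD^{\sharp,\leq 0}$ et $B\in \cD^{\sharp,\geq 1}$.

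Il restera à vérifier que la t-structure ainsi construite est bornée, ce qui découle directement du caractère borné de celle de cœur $\cA$ : si $H^i(E)=0$ pour $|i|>N$, alors les conditions portant sur la cohomologie d'ordre extrême qui interviennent dans $\cD^{\sharp,\leq n}$ et $\cD^{\sharp,\geq m}$ (à savoir appartenir respectivement à $\cT$ ou à $\cF$) sont trivialement satisfaites dès que $n$ est suffisamment grand et $m$ suffisamment petit, puisque l'objet nul appartient à la fois à $\cT$ et à $\cF$.
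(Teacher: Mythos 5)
Votre démonstration est correcte et suit l'argument standard de Happel-Reiten-Smal\o{} : définition explicite des deux sous-catégories $\cD^{\sharp,\leq 0}$ et $\cD^{\sharp,\geq 0}$, orthogonalité ramenée par dévissage à l'annulation de $\Hom_\cA(\cT,\cF)$, et construction du triangle de troncation par l'octaèdre appliqué à $A\ra\tau^{\leq 0}E\ra E$, où $A$ est la fibre de $\tau^{\leq 0}E\ra F$. Le texte ne donne pas de preuve de cette proposition (il renvoie implicitement à \cite{HRS96}), et votre rédaction fournit exactement l'argument attendu, y compris la vérification du caractère borné.
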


\begin{defi}
La catégorie $\cA^\sharp$ est le basculement de $\cA$ par rapport au couple de torsion $(\cF, \cT)$.
\end{defi}

Voici une façon peut-être plus claire de comprendre ce qui précède : par construction, la catégorie $\cA^\sharp$ contient les catégories $\cT$ et $\cF[1]$, et tout objet de $\cA^\sharp$ est extension --- nécessairement unique --- d'un objet de $\cF[1]$ par un objet de $\cT$. Autrement dit, $(\cT, \cF[1])$ est un couple de torsion sur $\cA^\sharp$. Le basculement de $\cA^\sharp$ par rapport à $(\cT, \cF[1])$ est la catégorie abélienne $\cA[1]$.

Les suites exactes courtes dans $\cA^\sharp$ correspondent aux triangles distingués
$$A\ra B\ra C\ra A[1]$$
dans $\cD$ dont tous les termes sont des objets de $\cA^\sharp$. En particulier, si $A$ est un objet de~$\cA^\sharp$, on a une suite exacte dans $\cA^\sharp$
$$0\ra H^{-1}(A)[1]\ra A\ra H^0(A)\ra 0,$$
où là encore les $H^i$ sont calculés pour la t-structure de c\oe{}ur $\cA$.

\bigskip

On peut décrire précisément certains sous-objets.

\begin{prop}
Soit $\cA$ une catégorie abélienne, c\oe{}ur d'une t-structure bornée sur une catégorie triangulée $\cD$, et soit $(\cF, \cT)$ un couple de torsion sur $\cA$. On note $\cA^\sharp$ le basculement de $\cA$ correspondant.

Soit $A$ un objet de $\cT$. Alors les sous-objets de $A$ dans $\cA^{\sharp}$ sont exactement les objets $B$ de $\cT$ munis d'une flèche $B\ra A$ dont le noyau dans $\cA$ est un objet de $\cF$.
\end{prop}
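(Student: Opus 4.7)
Le plan consiste � traduire les monomorphismes dans $\cA^\sharp$ en termes de triangles distingu�s dans $\cD$, puis � extraire les conditions requises via la suite exacte longue de cohomologie relative � la t-structure originale de c\oe{}ur $\cA$. L'�quivalence annonc�e se scindera alors en deux calculs formels n'invoquant que les informations standard sur les cat�gories $\cA^\sharp$ et les propri�t�s d'un couple de torsion.

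Pour la n�cessit�, je partirai d'un monomorphisme $f : B \ra A$ dans $\cA^\sharp$, qui s'ins�re dans un triangle distingu� $B \ra A \ra C \ra B[1]$ de $\cD$ avec $C\in\cA^\sharp$. En appliquant les foncteurs de cohomologie $H^i$ associ�s � la t-structure de c\oe{}ur $\cA$, j'utiliserai que $A\in\cT$ impose $H^{-1}(A)=0$ et $H^0(A)=A$, tandis que $B, C\in\cA^\sharp$ impose que leur cohomologie est concentr�e en degr�s $-1, 0$ avec $H^{-1}\in\cF$ et $H^0\in\cT$. La suite exacte longue se r�duit alors �
$$0 \ra H^{-1}(B) \ra 0 \ra H^{-1}(C) \ra H^0(B) \ra A \ra H^0(C) \ra 0,$$
d'o� $H^{-1}(B)=0$ (donc $B\in\cT$), la fl�che $H^0(B)\ra A$ s'identifiant � $f$ par fonctorialit�, et son noyau dans $\cA$ �tant $H^{-1}(C)\in\cF$ par d�finition de $\cA^\sharp$.

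Pour la r�ciproque, �tant donn�s $B\in\cT$ et $f : B\ra A$ de noyau $K\in\cF$ dans $\cA$, je formerai le c�ne $C$ de $f$ dans $\cD$. Puisque $B$ et $A$ sont tous deux dans $\cA$, la cohomologie de $C$ est concentr�e en degr�s $-1, 0$, avec $H^{-1}(C)=K\in\cF$ et $H^0(C)$ �gal au conoyau de $f$ dans $\cA$. Ce conoyau �tant un quotient de $A\in\cT$, la remarque \ref{remarque:stabilite-quotient} garantit qu'il reste dans $\cT$. Ainsi $C\in\cA^\sharp$, le triangle $B\ra A\ra C$ est une suite exacte courte dans $\cA^\sharp$, et $f$ est un monomorphisme.

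Je ne pr�vois pas de v�ritable obstacle : l'argument rel�ve essentiellement d'une comptabilit� entre les deux t-structures. L'unique ingr�dient non strictement formel est la stabilit� de $\cT$ par passage au quotient, d�j� enregistr�e dans la remarque \ref{remarque:stabilite-quotient}, sans laquelle le c�ne construit dans la seconde direction n'appartiendrait pas n�cessairement � $\cA^\sharp$.
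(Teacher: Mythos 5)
Votre preuve est correcte et suit essentiellement la m\^eme d\'emarche que celle du texte : pour la direction que le texte traite explicitement, vous formez le m\^eme c\^one et invoquez la m\^eme remarque sur la stabilit\'e de $\cT$ par quotients. Pour la r\'eciproque, que le texte laisse comme \guillemotleft{} preuve similaire \guillemotright{}, votre suite exacte longue de cohomologie est pr\'ecis\'ement l'argument attendu et est men\'ee sans erreur.
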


\begin{proof}
Soit $f : B\ra A$ comme dans l'énoncé. Complétons cette flèche en un triangle distingué dans $\cD$
$$B\ra A\ra C\ra B[1],$$
où $C$ est le complexe $B\ra A,$ $B$ étant placé en degré $-1$. Alors $H^{-1}(C)=\Ker f$ est un objet de $\cF$ par hypothèse, et $H^0(C)=\mathrm{Coker} f$ est un objet de $\cT$ comme quotient dans $\cA$ d'un objet de $\cT$ --- voir la remarque \ref{remarque:stabilite-quotient}. Ainsi, $C$ est un objet de $\cA^{\sharp}$, d'où une suite exacte 
$$0\ra B\ra A\ra C\ra 0$$
dans $\cA^\sharp$, qui montre que $B$ est bien un sous-objet de $A$ dans $\cA^\sharp$.

L'énoncé réciproque a une preuve similaire.
\end{proof}

\begin{exem}\label{exemple:tilt-stab}
Soit $\cA$ une catégorie abélienne, c\oe{}ur d'une t-structure bornée sur une catégorie triangulée $\cD$, et soit $Z$ une fonction de stabilité sur $\cA$. Soit $\cP$ le découpage de $\cD$ correspondant. Alors, pour tout $\phi\in \mathopen]0,1]$, le couple $(\cP(\mathopen]0, \phi]), \cP(\mathopen]\phi, 1]))$ est un couple de torsion sur $\cA$. C'est le c\oe{}ur de la condition de stabilité obtenue par action de la rotation d'angle $\phi$ sur $(\cA, Z)$.
\end{exem}

\begin{exem}
Soit $\cD$ une catégorie triangulée, et soient $\cP$ et $\cQ$ deux découpages de~$\cD$. Notons $\cA$ (resp. $\cB$) le c\oe{}ur de $\cP$ (resp. $\cQ$), et supposons que tout objet de $\cA$ soit extension d'objets de $\cB$ et $\cB[1]$ dans $\cD$ --- autrement dit, supposons que $\cP(\mathopen]0,1])$ soit une sous-catégorie pleine de $\cQ(\mathopen]-1,1])$. Alors le couple
$$(\cA\cap\cQ(\mathopen]-1,0]), \cA\cap\cQ(\mathopen]0,1]))$$
est un couple de torsion sur $\cA$, dont le basculement est $\cB$.
\end{exem}

Les deux exemples précédents ont pour conséquence immédiate l'énoncé suivant, qui souligne l'importance de la notion de basculement dans l'étude des conditions de stabilité.

\begin{prop}
Soit $\cD$ une catégorie triangulée, soit $\varepsilon>0$, et soit $\sigma=(\cP, Z)$ une condition de stabilité sur $\cD$. Il existe un voisinage ouvert $U$ de $\sigma$ dans $\Stab(\cD)$ tel que pour tout $\tau$ dans $U$, le c\oe{}ur de $\tau$ est un basculement de $\cP(]-\varepsilon, 1-\varepsilon])$.
\end{prop}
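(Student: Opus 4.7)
The plan is to invoke the second example preceding the proposition: given slicings $\cP'$ and $\cQ$ of $\cD$ with hearts $\cA$ and $\cB$, if $\cA \subset \cQ(\mathopen]-1,1])$, then $(\cA\cap\cQ(\mathopen]-1,0]),\cA\cap\cQ(\mathopen]0,1]))$ is a torsion pair on $\cA$ whose tilt is $\cB$. I will apply it with $\cA := \cP(\mathopen]-\varepsilon, 1-\varepsilon])$, the heart of the shifted slicing $\cP'$ defined by $\cP'(\phi) := \cP(\phi - \varepsilon)$, and with $\cB := \cQ(\mathopen]0,1])$, the heart of $\tau = (\cQ, Z')$. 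The statement then reduces to producing an open neighborhood $U$ of $\sigma$ on which the inclusion $\cP(\mathopen]-\varepsilon, 1-\varepsilon]) \subset \cQ(\mathopen]-1,1])$ holds.

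The key step is to control this inclusion via the distance $d$ on $\mathrm{Dec}(\cD)$. By Lemma~\ref{lemme: distance-alt} and the symmetry of $d$, if $d(\cP, \cQ) < \delta$ then $\cP(\phi) \subset \cQ(\mathopen]\phi-\delta, \phi+\delta\mathclose[)$ for every real $\phi$. Since each $\cQ(I)$ is stable under extensions and contains the $\cQ(\psi)$ for $\psi \in I$, this passes to extension closures and yields $\cP(\mathopen]a,b]) \subset \cQ(\mathopen]a-\delta, b+\delta\mathclose[)$ for any half-open interval $\mathopen]a,b]$.

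Under the assumption $\varepsilon \in \mathopen]0,1\mathclose[$ (the relevant range; larger $\varepsilon$ reduce to this case after a shift), take $\delta := \min(\varepsilon, 1-\varepsilon) > 0$. Then $-\varepsilon - \delta \geq -1$ and $1 - \varepsilon + \delta \leq 1$, so $\mathopen]-\varepsilon - \delta, 1-\varepsilon+\delta\mathclose[ \subset \mathopen]-1, 1]$, whence $\cP(\mathopen]-\varepsilon, 1-\varepsilon]) \subset \cQ(\mathopen]-1,1])$ as soon as $d(\cP, \cQ) < \delta$. Taking $U$ to be the preimage under the projection $p : \Stab(\cD) \to \mathrm{Dec}(\cD)$ of the open $d$-ball of radius $\delta$ around $\cP$ produces, by the definition of the topology on $\Stab(\cD)$, an open neighborhood of $\sigma$ on which Example 2 applies, yielding the conclusion. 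The argument is essentially bookkeeping; the only subtlety is keeping track of strict versus non-strict endpoints so that the interval inclusion $\mathopen]-\varepsilon - \delta, 1-\varepsilon+\delta\mathclose[ \subset \mathopen]-1, 1]$ comes out on the nose.
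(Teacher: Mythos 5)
Your argument is correct and is exactly the route the paper intends: the paper gives no proof at all, stating the proposition as an immediate consequence of the two preceding examples, and your write-up supplies precisely the missing bookkeeping (the rotated heart from the first example, the tilt criterion from the second, and the metric estimate $\cP(]a,b])\subset\cQ(]a-\delta,b+\delta[)$ via Lemma~\ref{lemme: distance-alt}). One small caveat: the claimed reduction of $\varepsilon\geq 1$ to $\varepsilon\in]0,1[$ \og apr\`es d\'ecalage\fg{} does not actually go through (a shift of a heart is not a tilt of it), but that range is clearly outside the intended scope of the statement.
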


\subsection{Surfaces}\label{subsection:surfaces}

Soit $X$ une surface projective complexe. La construction de conditions de stabilité sur les surfaces apparaît dans \cite{Bridgeland08} pour les surfaces $\mathrm{K}3$, et dans \cite{ArcaraBertram13}. La version du théorème \ref{theoreme:def-Bridgeland} que nous avons donnée ici, due à Bayer, permet de simplifier largement les preuves de ces résultats.

Dans ce qui suit, on note $\mathrm{rg}(A)$ pour le rang d'un faisceau cohérent $A$ sur $X$ au point générique.

\bigskip

Commençons par quelques rappels sur la stabilité pour les faisceaux sans torsion sur les surfaces --- voir par exemple \cite{HuybrechtsLehn10}. Soit $\omega$ un élément du cône ample de $\NS(X)\otimes\R$.

\begin{defi}\label{definition:stable-classique}
Soit $A$ un faisceau cohérent sur $X$. 
\begin{enumerate}
\item La pente de $A$ est l'élément $\mu_\omega(A)$ de $\R\cup\{+\infty\}$ défini par 
$$\mu_\omega(A)=\frac{\omega.c_1(A)}{\mathrm{rg}(A)}$$
si $\mathrm{rg}(A)>0$, et $+\infty$ sinon.
\item On dit que $A$ est $\mu$-semistable (resp. $\mu$-stable) si pour tout sous-faisceau strict non nul $B$ de $A$, on a $\mu_\omega(B)\leq\mu_\omega(A)$ (resp.  $\mu_\omega(B)<\mu_\omega(A)$).
\item Supposons que $A$ est pur\footnote{$A$ est pur de dimension $d$ si le support de tout sous-faisceau cohérent non nul de $A$ est de dimension~$d$.} de dimension $d\in\{0, 1, 2\}$. Notons $P_{\omega, A}$ le polynôme
$$n\mapsto \int_X e^{n\omega}\ch(A)\td_X,$$
et $p_{\omega, A}$ le polynôme unitaire de degré $d$ qui lui est proportionnel. On dit que $A$ est semistable (resp. stable) au sens de Gieseker si pour tout sous-faisceau strict $B$ de $A$ et tout $n$ suffisamment grand, on a 
$$p_{\omega, B}(n)\leq p_{\omega, A}(n).$$
\end{enumerate}
\end{defi}

On parlera souvent de stabilité pour désigner la stabilité au sens de Gieseker. La $\mu$-stabilité implique la stabilité, et la semistabilité implique la $\mu$-semistabilité.

\begin{rema}
Les définitions ci-dessus impliquent que les faisceaux de torsion sont tous $\mu$-semistables de pente $+\infty$, et que les faisceaux $\mu$-semistables de rang non nul sont sans torsion.
\end{rema}

\begin{rema}
Si $\omega$ est la classe d'équivalence numérique d'un fibré en droites $H$, le théorème de Hirzebruch-Riemann-Roch montre que l'on a 
$$P_{\omega, A}(n)=\chi(A\otimes H^{\otimes n})$$
pour tout entier $n$. Un faisceau semistable au sens de Gieseker sera toujours supposé pur.
\end{rema}

On utilisera l'existence de filtrations de Harder-Narasimhan pour la $\mu$-stabilité : tout objet $A$ de $\mathrm{Coh}(X)$ a une filtration finie
$$0=A_0\subset A_1\subset\cdots\subset A_n=A$$
telle que
\begin{itemize}
\item pour tout $i\geq 0$, $A_{i+1}/A_i$ est $\mu$-semistable;
\item pour tout $i\geq 0$, $\mu_\omega(A_{i+1}/A_i)>\mu_\omega(A_{i+2}/A_{i+1})$.
\end{itemize}
En particuler, $A_1$ est le sous-faisceau de torsion de $A$. Les $\mu_\omega(A_{i+1}/A_i)$ sont les \emph{pentes} de $A$, et l'on notera $\mu_\omega(A_1)=\mu_\omega^+(A)$, $\mu_\omega(A/A_{n-1})=\mu_\omega^-(A)$.

\bigskip

La $\mu$-stabilité ne permet pas de donner directement une condition de stabilité sur $\mathrm{Coh}(X)$ ou $D^b(X)$. En effet, si $A$ est un faisceau cohérent sur $X$ supporté en codimension~$2$, alors $\omega.c_1(A)=0$ et $\mathrm{rg}(A)=0$. Cependant, l'existence de filtrations de Harder-Narasimhan permet de construire des catégories abéliennes dans $D^b(A)$ par basculement comme dans l'exemple \ref{exemple:tilt-stab}.

Par ailleurs, la notion de stabilité au sens de Gieseker est plus adaptée pour traiter des faisceaux cohérents arbitraires, mais elle n'est pas formulée a priori en terme de pentes. Pour motiver l'introduction de la charge centrale adaptée, considérons un faisceau $A$ sans torsion sur $X$. 
Notons 
$$P_{\omega, A}(X)=a(X^2+bX+c)=\int_X e^{X\omega}\ch(A)\td_X,$$
et considérons
$$W_{n\omega}(A)=-i\int_X e^{in\omega}\ch(A)\td_X=-i\,P_{\omega, A}(in)=a(bn+i(n^2-c)).$$
Pour $n$ suffisamment grand, la partie imaginaire de $W_{n\omega}(A)$ est strictement positive, et la pente généralisée de $W_{n\omega}(A)$, au sens de \ref{subsection:stab-ab}, est 
$$\frac{bn}{n^2-c}.$$
Soit maintenant $B$ un sous-faisceau cohérent de $A$, avec 
$$P_{\omega, B}(X)=a'(X^2+b'X+c').$$
Supposons pour fixer les idées $b$ et $b'$ positifs. Alors, pour $n$ suffisamment grand, on a 
$$\frac{b'n}{n^2-c'}\leq \frac{bn}{n^2-c}$$
si et seulement si $b'<b$ ou $b=b'$ et $c'\leq c$. Autrement dit, c'est le cas si et seulement si 
$$P_{\omega, B}(n)\leq P_{\omega, A}(n)$$
pour tout $n$ suffisamment grand. On retrouve ainsi la stabilité au sens de Gieseker --- à la condition de positivité de $b$ et $b'$ près, que l'on assurera par un procédé de basculement, puisque $b$ est, à une constante multiplicative strictement positive près, la pente $\mu_\omega(A)$.

\bigskip

La discussion ci-dessus suggère de considérer des charges centrales de la forme 
$$A\mapsto -i\int_X e^{in\omega}\ch(A)\td_X.$$
Plus généralement, la stabilité au sens de Gieseker n'étant pas stable par tensorisation avec un fibré en droites, on est amené à tordre les données comme suit. Soit $\beta$ un élément arbitraire de $\NS(X)_\R$ --- qui n'est donc pas nécessairement la classe d'un fibré en droites. On définit, pour tout faisceau cohérent de rang strictement positif
$$\mu_{\omega, \beta}(A)=\frac{\omega.(c_1(A)-\mathrm{rg}(A)\beta)}{\mathrm{rg}(A)}=\frac{\omega.c_1(A)}{\mathrm{rg}(A)}-\omega.\beta$$
et l'on pose $\mu_{\omega, \beta}(A)=+\infty$ si le rang de $A$ est nul. Si $\beta$ est de la forme $c_1(L)$, alors bien sûr, $\mu_{\omega, \beta}(A)=\mu_{\omega}(A\otimes L^\vee)$. Tout comme $\mu_\omega$ dont elle se déduit par translation, la pente $\mu_{\omega, \beta}$ définit une notion de stabilité pour laquelle existent des filtrations de Harder-Narasimhan. On notera encore $\mu_{\omega, \beta}^\pm(A)$ pour les plus petites et plus grandes pentes correspondantes.

Considérons maintenant 
$$Z_{\omega, \beta} : K(X)\ra\C, A\mapsto-\int_X e^{-i\omega-\beta}\ch(A).$$
Là encore, si $\beta=c_1(L)$, on a $Z_{\omega, \beta}(A)=Z_{\omega, 0}(A\otimes L^\vee).$ C'est la charge centrale que nous allons considérer.

\begin{rema}
Les normalisations dans la définition de $Z_{\omega, \beta}$ varient dans la littérature suivant les auteurs et le contexte. Les signes devant $\int_X$, $\omega$ et $\beta$ peuvent varier, ce qui est bien entendu accessoire à l'action de $\widetilde{GL}_2^+(\R)$ et à des basculements près. 

Dans le cas où $X$ est une surface $\mathrm{K}3$, on introduit plutôt
$$Z_{\omega, \beta}(A)=-\int_X e^{-i\omega-\beta}\ch(A)\sqrt{\td_X},$$
pour des raisons liées à l'utilisation systématique du réseau de Mukai. 
\end{rema}

Dans ce qui suit, notons 
$$\ch^\beta(A)=e^{-\beta}\ch(A), \ch_i^\beta(A)=(e^{-\beta}\ch(A))_i,$$
où l'indice $i\in\{0,1,2\}$ désigne la composante de codimension $i$. On a alors
\begin{align}
\ch_0^\beta(A) & =\mathrm{rg}(A), \notag \\
\ch_1^\beta(A) & =c_1(A)-\mathrm{rg}(A)\beta, \notag \\
\ch_2^\beta(A) & =\ch_2(A)-\beta.c_1(A)+\frac{1}{2}\beta^2\mathrm{rg}(A),\notag \\
\label{equation:Z-formule}
Z_{\omega, \beta}(A) & =-\int_X e^{-i\omega}\ch^B(A)=\Big(\frac{1}{2}\mathrm{rg}(A)\omega^2-\ch_2^\beta\Big)+i\,\mathrm{rg}(A)\mu_{\omega, \beta}(A).
\end{align}

Si $Z_{\omega, \beta}$ est la charge centrale d'une condition de stabilité sur $D^b(X)$, son c\oe{}ur doit être une catégorie abélienne dont les objets sont des objets $A$ de $D^b(X)$ tels que $\mathrm{rg}(A)\mu_{\omega, \beta}(A)\geq 0$. 

Soient donc $\cF_{\omega, \beta}$ et $\cT_{\omega, \beta}$ les sous-catégories pleines de $\mathrm{Coh}(X)$ dont les objets non nuls sont les faisceaux cohérents $A$ tels que $\mu_{\omega, \beta}^+(A)\leq 0$ et $\mu_{\omega, \beta}^-(A)> 0$ respectivement. Les propriétés de base de la $\mu$-stabilité garantissent que $(\cF_{\omega, \beta}, \cT_{\omega, \beta})$ est un couple de torsion. 

\begin{defi}
Avec les notations précédentes, on note 
$$\mathrm{Coh}^{\omega, \beta}(X)$$
la catégorie abélienne obtenue comme basculement de $\mathrm{Coh}(X)$  à partir de $(\cF_{\omega, \beta}, \cT_{\omega, \beta})$. C'est le c\oe{}ur basculé de $\mathrm{Coh}(X)$.
\end{defi}

Voici le théorème principal d'existence \cite{Bridgeland08, ArcaraBertram13}.

\begin{theo}\label{theoreme:existence-stab}
Soit $A$ le cône ample de $X$ dans $\NS(X)_\R$. Pour tout $\omega\in A, \beta\in \NS(X)_\R$, 
$\sigma_{\omega, \beta}=(\mathrm{Coh}^{\omega, \beta}(X), Z_{\omega, \beta})$ est une condition de stabilité sur $D^b(X)$. De plus, la flèche 
$$\widetilde{GL}_2^+(\R)\times A\times \NS(X)_\R\ra \Stab(X), (g, \omega, \beta)\mapsto \sigma_{\omega, \beta}.g$$
est une immersion ouverte.
\end{theo}

Donnons quelques éléments de preuve --- nous renvoyons à \cite{MacriSchmidt17} pour détails et références et donnerons par ailleurs des arguments plus précis dans le cas où $X$ est une surface $\mathrm{K}3$. 

L'énoncé difficile est celui qui affirme que $\sigma_{\omega, \beta}$ est une condition de stabilité pour tout couple $(\omega, \beta)$ --- la continuité suit d'arguments de rigidité, et la propriété d'immersion ouverte est facile à vérifier car le théorème de déformation de Bridgeland nous permet de le faire après projection sur $\Hom(K(X)_{\mathrm{num}}, \C).$

La stratégie est simple : on commence par traiter le cas où $\omega$ et $\beta$ sont des classes de fibrés en droites, et l'on montre que $\sigma_{\omega, \beta}$ est une condition de stabilité vérifiant la condition de support par rapport à une forme quadratique explicite. Ensuite, le théorème \ref{theoreme:def-Bridgeland}, dans la version effective sous laquelle nous l'avons formulé, permet par prolongement de montrer que $\sigma_{\omega', \beta'}$ est une condition de stabilité quand $\omega'$ et $\beta'$ appartiennent à un voisinage contrôlé de $(\omega, \beta)$. Dans tout cela, l'ingrédient clé est le suivant, voir par exemple \cite{HuybrechtsLehn10}.

\begin{theo}[Inégalité de Bogomolov]\label{theoreme:bogomolov}
Soit $A$ un faisceau sans torsion, \mbox{$\mu$-semistable} par rapport à un certain fibré en droites ample. Soit $\beta\in \NS(X)_\R$. Alors 
$$\ch_1^\beta(A)^2\geq 2\mathrm{rg}(A)\ch_2^\beta(A).$$
\end{theo}

C'est l'inégalité quadratique ci-dessus qui, à quelques manipulations près pour faire intervenir $\omega$, fournit la condition de support.

Montrons tout d'abord que $Z=Z_{\omega, \beta}$ est une fonction de stabilité sur $\mathrm{Coh}^{\omega, \beta}(X)$. Un objet $E$ de $\mathrm{Coh}^{\omega, \beta}(X)$ s'insère dans une suite exacte
$$0\ra A[1]\ra E\ra B\ra 0,$$
où $A$ et $B$ sont des objets de $\cF_{\omega, \beta}$ et $\cT_{\omega, \beta}$ respectivement. Notant $Z(A[1])=-Z(A)$, la construction de $\cF_{\omega, \beta}$ et $\cT_{\omega, \beta}$, considérant (\ref{equation:Z-formule}), assure que les parties imaginaires de $Z(A[1])$ et $Z(B)$ sont positives --- il en va donc de même de celle de $Z(E)$. Par ailleurs, si le support de $B$ est de dimension strictement positive, la partie imaginaire de $Z(B)$ est non nulle elle aussi.

Supposons que $Z(E)$ soit réelle. Alors $B$ est de longueur finie, $Z(B)$ est strictement négatif si $B$ est non nul par (\ref{equation:Z-formule}),  et 
$$Z(E)=Z(B)-Z(A)=Z(B)+\ch_2^\beta(A)-\frac{1}{2}\mathrm{rg}(A)\omega^2,$$
tandis que 
$$\ch_1^\beta(A).\omega=0.$$
Le théorème de l'indice de Hodge montre que l'on a $(\ch_1^\beta(A))^2\leq 0$, et l'inégalité de Bogomolov implique 
$$\ch_2^\beta(A)\leq 0,$$
ce qui montre bien que $Z(E)$ est strictement négatif sauf si $B=0$, $\mathrm{rg}(A)=0$ et $\ch_2^\beta=0$. Dans ce cas, $A$ est de torsion, mais $\ch_1^\beta(A)$ est alors effectif, ce qui contredit l'égalité $\ch_1^\beta(A).\omega=0$ si $\ch_1^\beta(A)$ est non nul. Nous avons bien montré que $Z$ est une fonction de stabilité sur $\mathrm{Coh}^{\omega, \beta}(X)$.

\bigskip

Il faut ensuite montrer l'existence de filtrations de Harder-Narasimhan. Dans le cas qui nous occupe, la discrétude de l'image de $Z$ nous permet --- par le même argument que pour la stabilité usuelle --- de montrer seulement que $\mathrm{Coh}^{\omega, \beta}(X)$ est une catégorie noethérienne, voir \cite[Section 7]{Bridgeland08}. 

\bigskip

Enfin, il faut montrer la condition de support par rapport à une forme quadratique explicite. L'idée est la suivante : soit $n$ un entier suffisamment grand. Si $E$ est un objet $\sigma_{n\omega, \beta}$-semistable, le choix de $Z$, comme nous l'avons expliqué au début de cette section, permet de montrer que $E$ est --- à décalage près --- un fibré semistable au sens de Gieseker --- voir par exemple \cite[Section 14]{Bridgeland08}. L'inégalité de Bogomolov fournit une inégalité pour $E$ telle que demandée par la condition de support. Il s'agit ensuite de montrer que cette inégalité vaut pour les objets $\sigma_{\omega, \beta}$-semistables. Partant d'un tel objet~$E$, on applique la méthode de wall-crossing pour les formes quadratiques utilisée dans la preuve du lemme \ref{lemme:calcul-cle}.

On considère le wall-crossing pour $E$ le long du chemin $t\mapsto \sigma_{t\omega, \beta}$. On montre alors qu'il existe un entier $n$ tel que les facteurs de Harder-Narasimhan de $E$ pour $\sigma_{n\omega, \beta}$ sont semistables au sens de Gieseker. Cela permet de conclure à la validité d'une inégalité à la Bogomolov pour $E$ à partir de celle de ses facteurs pour $\sigma_{n\omega, \beta}$.

\subsection{Surfaces $\mathrm{K}3$}

Soit maintenant $X$ une surface $\mathrm{K}3$ --- rappelons qu'il s'agit d'une surface projective lisse, irréductible, simplement connexe, de fibré canonique trivial. On renvoie à \cite{HuybrechtsK3} pour les propriétés générales de ces surfaces. Suivant \cite{Bridgeland08}, décrivons plus précisément une composante connexe de l'espace des conditions de stabilité.

Soit 
$$\Lambda=\Z\oplus \NS(X)\oplus\Z$$
le \emph{réseau de Mukai} de $X$, que l'on munit de la forme quadratique
$$(r, c, s)(r', c', s')=c.c'-rs'-r's.$$
Le réseau de Mukai est pair.

Si $A$ est un élément de $K(X)$, on note $v(A)=\ch(A)\sqrt{\td_X}\in\Lambda$ le \emph{vecteur de Mukai} de $A$. Dans le cas présent, le théorème de Riemann-Roch montre 
$$\chi(A, B)=-(v(A), v(B))$$
pour tous $A, B$ dans $K(X)$.

Étant donnée une charge centrale $Z : K(X)\ra\C$ supposée se factoriser par l'équivalence numérique, nous noterons encore $Z : \Lambda\ra\C$ le morphisme tel que pour tout $A\in K(X)$, 
$$Z(A)=Z(v(A)).$$

\begin{prop}\label{proposition:stabilite-intersection}
Soit $\sigma$ une condition de stabilité sur $X$, et soit $A$ un objet $\sigma$-stable de $D^b(X)$. Alors 
$$(v(A), v(A))\geq -2.$$
\end{prop}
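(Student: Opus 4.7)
La strat\'egie que je suivrais consiste \`a calculer $\chi(A, A)$ en bornant les dimensions des $\Hom(A, A[i])$ pour $i\in\Z$, puis \`a invoquer la formule $\chi(A, A)=-(v(A), v(A))$ rappel\'ee juste avant l'\'enonc\'e.

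Premi\`ere \'etape: l'annulation de $\Hom(A, A[i])$ pour $i\notin\{0, 1, 2\}$. Pour $i<0$, comme $A$ est $\sigma$-stable, il appartient \`a $\cP(\phi)$ pour un certain $\phi\in\R$, et $A[i]$ appartient alors \`a $\cP(\phi+i)$ avec $\phi+i<\phi$; le lemme \ref{lemme:orthogonalite} donne $\Hom(A, A[i])=0$. Pour $i>2$, j'utiliserais la dualit\'e de Serre: puisque $X$ est une surface $\mathrm{K}3$, son fibr\'e canonique est trivial et la dualit\'e fournit un isomorphisme $\Hom(A, A[i])\simeq \Hom(A, A[2-i])^\vee$; comme $2-i<0$, on est ramen\'e au cas pr\'ec\'edent.

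Deuxi\`eme \'etape: le calcul des dimensions en degr\'es $0$ et $2$. Par d\'efinition, $A$ est un objet simple de la cat\'egorie ab\'elienne $\cP(\phi)$, dont on a vu qu'elle est de longueur finie, et qui est $\Hom$-finie puisque $X$ est projective. Le lemme de Schur fournit donc que $\Hom(A, A)$ est une $\C$-alg\`ebre \`a division de dimension finie, donc isomorphe \`a $\C$. La dualit\'e de Serre donne alors $\dim\Hom(A, A[2])=\dim\Hom(A, A)=1$.

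En combinant ces observations, on obtient
$$\chi(A, A)=1-\dim\Ext^1(A, A)+1\leq 2,$$
ce qui, joint \`a l'\'egalit\'e $\chi(A, A)=-(v(A), v(A))$, donne imm\'ediatement $(v(A), v(A))\geq -2$. L'\'etape la plus sp\'ecifique au cas des surfaces $\mathrm{K}3$ est l'emploi de la dualit\'e de Serre avec fibr\'e canonique trivial; le reste de l'argument est purement formel et repose sur les propri\'et\'es g\'en\'erales des conditions de stabilit\'e \'etablies dans les sections pr\'ec\'edentes, ainsi que sur le fait que la stabilit\'e de $A$ signifie par d\'efinition sa simplicit\'e dans $\cP(\phi)$.
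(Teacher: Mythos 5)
Votre preuve est correcte et suit essentiellement la m\^eme d\'emarche que celle du texte : annulation des $\Hom(A,A[i])$ pour $i<0$ par orthogonalit\'e des $\cP(\phi)$, dualit\'e de Serre avec fibr\'e canonique trivial pour traiter $i>2$ et le degr\'e $2$, et lemme de Schur (le texte dit �~consid\'erer image et noyau d'un endomorphisme~�, ce qui revient au m\^eme) pour obtenir $\dim\Hom(A,A)=1$, d'o\`u $\chi(A,A)=2-\dim\Hom(A,A[1])\leq 2$.
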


\begin{proof}
Il faut montrer l'inégalité $\chi(A, A)\leq 2$, i.e. 
$$\sum_{i\in\Z} (-1)^i\dim\Hom(A, A[i])\leq 2.$$
Soit $\phi$ la phase de $A$. Pour tout entier $i$, $A[i]$ est stable de phase $\phi+i$. En particulier, pour $i<0$, on a $\Hom(A, A[i])=0$.

Par dualité de Serre, on a 
$$\dim\Hom(A, A[i])=\dim\Hom(A[i], A[2])=\dim\Hom(A, A[2-i]),$$
donc $\Hom(A, A[i])=0$ pour tout $i>2$, et $\Hom(A, A[2])=\Hom(A, A),$ qui est de dimension $1$ car $A$ est stable --- considérer image et noyau d'un endomorphisme de $A$.

Finalement, on a $\chi(A, A)=2-\dim\Hom(A, A[1])\leq 2.$
\end{proof}

Comme on l'a mentionné plus haut, les normalisations liées aux conditions de stabilité que l'on vient de construire sont différentes dans le cas des surfaces $\mathrm{K}3$, ce afin de travailler de manière compatible à la structure quadratique du réseau de Mukai. Nous écrirons donc, avec les notations de \ref{subsection:surfaces}, 
$$Z_{\omega, \beta}(A)=-\int_X e^{-i\omega-\beta}\ch(A)\sqrt{\td_X}.$$
La partie imaginaire de $Z_{\omega, \beta}$ étant inchangée par rapport à la convention de \ref{subsection:surfaces}, on ne change pas la notation $\mathrm{Coh}^{\omega, \beta}(X)$. Le théorème \ref{theoreme:existence-stab} devient dans ce contexte l'énoncé suivant --- remarquer la condition supplémentaire sur $\omega$, qui vient du changement de normalisation.

\begin{theo}[\cite{Bridgeland08}]
Soit $A$ le cône ample de $X$ dans $\NS(X)_\R$. Pour tout $\omega\in A, \beta\in \NS(X)_\R$ tel que $\omega^2>2$, $\sigma_{\omega, \beta}=(\mathrm{Coh}^{\omega, \beta}(X), Z_{\omega, \beta})$ est une condition de stabilité sur $D^b(X)$. De plus, la flèche induite
$$\widetilde{GL}_2^+(\R)\times \{\omega\in A, \omega^2>2\}\times \NS(X)_\R\ra \Stab(X), (g, \omega, \beta)\mapsto \sigma_{\omega, \beta}.g$$
est une immersion ouverte.
\end{theo}

\bigskip

Soit $\Stab^0(X)$ une composante connexe de $\Stab(X)$ contenant les conditions de stabilité du théorème ci-dessus. 

Soit $\cP_0(X)$ le sous-ensemble de $\Hom(\Lambda, \C)$ constitué des $Z : \Lambda\ra\C$ tels que $\Ker Z$ est un sous-espace défini négatif de $\Lambda_\R$, et ne contient pas de racine de $\Lambda$, i.e. de $\delta\in\Lambda$ tel que $(\delta, \delta)=-2.$ On peut montrer que $\cP_0(X)$ est ouvert dans $\Hom(\Lambda, \C)$, \cite[Proposition 8.1 et Proposition 8.3]{Bridgeland08}.

\begin{theo}[\cite{Bridgeland08}]\label{theorem:stab-K3}
Il existe une composante connexe $\cP_0^+(X)$ de $\cP_0(X)$ telle que l'application naturelle 
$$q : \Stab^0(X)\ra \Hom(\Lambda, \C)$$
est un revêtement de $\cP_0^+(X)$.
\end{theo}

\begin{rema}\label{remarque:dual}
La forme de Mukai identifie $\Hom(\Lambda, \C)$ à $\Lambda_\C$. Sous cette identification, $\cP_0(X)$ devient l'ensemble des éléments de $\Lambda_\C$ dont les parties réelles et imaginaires engendrent un $2$-plan positif dans $\Lambda_\R$, privé des $\delta^\perp$, où $\delta$ parcourt les racines de $\Lambda$.
\end{rema}

\begin{rema}
En général, si $X$ est une surface arbitraire et $\Stab^0(X)$ une composante de $\Stab(X)$, il est faux que l'application $q : \Stab^0(X)\ra \Hom(\Lambda, \C)$ soit un revêtement de son image, voir par exemple \cite{Meinhardt12}.
\end{rema}

\begin{proof}
On commence par montrer que $q^{-1}(\cP_0^+(X))\ra\cP_0^+(X)$ est un revêtement, en suivant les idées de \cite{Bayer16}. L'idée est de remarquer que, pour $\sigma\in q^{-1}(\cP_0^+(X)),$ la proposition \ref{proposition:stabilite-intersection} n'est pas loin de fournir une forme quadratique qui prouve la condition de support pour $\sigma$.

Soit $Z\in \cP_0^+(X)$. Notons $p$ la projection orthogonale $\Lambda_\R\mapsto \Ker Z$, et écrivons
$$(v,v)=\Vert Z(v)\Vert ^2-\vvvert p(v)\vvvert ^2,$$
où $\Vert .\Vert $ est une forme quadratique définie positive sur $\R^2\simeq\C$, et $\vvvert .\vvvert $ la norme euclidienne induite par $-(.,.)$ sur $\Ker Z$.

Soit $\Delta$ l'ensemble des racines de $\Lambda$, et soit $\delta\in\Delta$. Par hypothèse, $Z(\delta)\neq 0$, et la formule ci-dessus montre
$$\vvvert p(\delta)\vvvert ^2=2+\Vert Z(\delta)\Vert ^2.$$
En particulier, pour toute constante $K$, il n'existe qu'un nombre fini de $\delta\in \Delta$ tels que $\Vert Z(\delta)\Vert \leq K$. On peut donc trouver une constante $C>0$ telle que pour tout $\delta\in\Delta$, $\Vert Z(\delta)\Vert \geq C.$

Considérons la forme quadratique 
$$Q_Z(v)=(v,v)+\frac{2}{C^2}\Vert Z(v)\Vert ^2=\alpha^2\Vert Z(v)\Vert ^2-\vvvert p(v)\vvvert ^2,$$
avec $\alpha^2=\frac{C^2+2}{C^2}$ et $\alpha>0$. Alors $Q_Z$ est définie négative sur $\Ker Z$. 

\bigskip

Soit $\sigma$ une condition de stabilité. La proposition \ref{proposition:stabilite-intersection} et la définition de $C$ montrent que si $A$ est un objet $\sigma$-stable de $D^b(X)$, alors $Q_Z(v(A))\geq 0$. En particulier, la proposition \ref{proposition:support-stable} montre que $\sigma$ satisfait la condition de support par rapport à $Q_Z$ si et seulement si $q(\sigma)$ appartient à l'ouvert $\cP(Z)$ des éléments $W$ de $\Hom(\Lambda, \C)$ tels que $\Ker W$ est un sous-espace défini négatif de $Q_Z$. Si un tel $\sigma$ existe, le théorème \ref{theoreme:def-Bridgeland} montre que $q^{-1}(\cP(Z))\ra\cP(Z)$ est un revêtement. En particulier, $Z$ est dans l'image de $q$.

\bigskip

Le théorème \ref{theoreme:def-Bridgeland} montre que l'image de $\Stab^0(X)$ par $q$ est ouverte. Soit $U$ l'intersection de cette image avec $\cP_0^+(X)$, et soit $S$ la préimage de $U$ dans $\Stab^0(X)$. Si $U\neq\cP_0^+(X)$, on peut trouver un élément $Z$ dans $\cP_0^+(X)$ adhérent à $U$ mais pas dans~$U$. L'argument précédent montre que $Z$ est dans l'image de $q$, contradiction. On a donc $U=\cP_0^+(X)$.

L'argument précédent montre que $q^{-1}(\cP_0^+(X))\ra\cP_0^+(X)$ est un revêtement localement au-dessus de $\cP_0^+(X)$. C'est donc un revêtement. 

\bigskip

Pour conclure, il reste seulement à montrer que l'image de $q$ est contenue dans $\cP_0^+(X)$. Il faut un type d'argument différent : comme dans le cas des courbes et de la preuve de la proposition \ref{proposition:stab-courbe}, on a besoin d'un résultat d'existence d'objets stables. On va donc utiliser l'énoncé profond suivant, qu'on ne démontrera pas.

\begin{prop}\label{proposition:toda-non-vide}
Soit $\sigma$ un élément de $\Stab^0(X)$, et soit $v$ un élément du réseau de Mukai $\Lambda$. Si $v^2\geq -2$, il existe un objet $\sigma$-semistable $A$ de $D^b(X)$ tel que $v(A)=v$.
\end{prop}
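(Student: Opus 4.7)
La strat�gie est de combiner un th�or�me classique d'existence en stabilit� de Gieseker avec un argument de wall-crossing pour propager cette existence � toute la composante $\Stab^0(X)$.

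Tout d'abord, on se ram�ne au cas o� $v$ est primitif. Si $v = nw$ avec $w \in \Lambda$ primitif et $n \geq 2$, la condition $v^2 = n^2 w^2 \geq -2$ jointe � la parit� de $w^2$ impose $w^2 \geq 0$, et en particulier $w^2 \geq -2$; un objet $\sigma$-semistable $A$ de vecteur de Mukai $w$ fournit alors la somme directe $A^{\oplus n}$, �galement $\sigma$-semistable, de vecteur de Mukai $v$.

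Ensuite, on construit un premier exemple. Pour $v$ primitif avec $v^2 \geq -2$ et $\omega$ une polarisation ample suffisamment g�n�rique (avec $\omega^2 > 2$), le th�or�me classique de Mukai-Yoshioka fournit un faisceau $\mu$-stable localement libre $E$ de vecteur de Mukai $v$. Dans la limite de grand volume $\sigma_{t\omega, 0}$ avec $t \gg 0$, un tel faisceau est $\sigma_{t\omega, 0}$-stable, comme cela est expliqu� dans la preuve du th�or�me~\ref{theoreme:existence-stab}. On dispose ainsi d'une condition $\sigma_0 \in \Stab^0(X)$ admettant un objet semistable de vecteur de Mukai $v$.

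Il reste � montrer que l'ensemble $U_v \subset \Stab^0(X)$ des conditions admettant un objet semistable de vecteur $v$ est � la fois ouvert et ferm�, d'o� par connexit� $U_v = \Stab^0(X)$. Pour l'ouverture, au voisinage de $\sigma_0 \in U_v$ avec t�moin $A$, on utilise la d�composition en murs et chambres de la proposition~\ref{proposition:murs-chambres} associ�e � $v$ : dans la m�me chambre que $\sigma_0$, l'objet $A$ reste semistable par la proposition~\ref{proposition:topologie-stable}; si $\sigma_0$ est sur un mur $W$, pour $\sigma$ dans une chambre adjacente, on forme une extension it�r�e des facteurs de Jordan-H�lder $A_1, \ldots, A_k$ de $A$ (dans l'ordre d�croissant des $\sigma$-phases), qui est $\sigma$-semistable de vecteur de Mukai $v$. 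Pour la fermeture, le corollaire~\ref{corollaire:uniformite-support} borne uniform�ment $|Z_{\sigma_n}(A_n)|$ pour une suite $(\sigma_n, A_n)$ avec $\sigma_n \to \sigma$ et $v(A_n) = v$, ce qui permet d'extraire une sous-famille convergente et d'obtenir, par la proposition~\ref{proposition:topologie-stable}, un objet $\sigma$-semistable limite de vecteur $v$.

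L'obstacle principal est l'�tape de fermeture : faire effectivement converger une famille born�e d'objets Bridgeland-semistables requiert un th�or�me de bornitude puis un argument de compl�tude de type Langton pour les champs de modules d'objets semistables, ce qui constitue le contenu substantiel de la preuve originale de Toda. L'invocation du th�or�me de Mukai-Yoshioka, d�j� non triviale en soi, ne saurait suffire sans cette machinerie.
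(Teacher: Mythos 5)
Le texte ne d\'emontre pas cette proposition~: il l'attribue \`a Toda et n'en donne que la strat\'egie (passage \`a la limite de grand volume $\sigma_{n\omega,\beta}$, $n\to\infty$, pour se ramener \`a la stabilit\'e \`a la Gieseker, puis invariance par wall-crossing d'invariants motiviques des champs $\mathfrak M_\sigma(v)$). Vos deux premi\`eres \'etapes suivent cette esquisse et sont essentiellement correctes, modulo deux d\'etails~: il faut d'abord ramener $v$ \`a un vecteur strictement positif au sens du th\'eor\`eme \ref{theorem:basic-module-K3} (en rempla\c{c}ant au besoin $v$ par $-v$, c'est-\`a-dire $A$ par $A[1]$, ou en tordant par un fibr\'e en droites), et ce th\'eor\`eme ne fournit qu'un faisceau semistable au sens de Gieseker, pas n\'ecessairement localement libre ni $\mu$-stable.

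La lacune s\'erieuse se trouve dans votre \'etape d'\emph{ouverture}, et non dans celle de fermeture. Si $\sigma_0$ est sur un mur et $\sigma$ dans une chambre adjacente, une extension it\'er\'ee des facteurs de Jordan-H\"older $A_1,\dots,A_k$ de $A$ rang\'es par $\sigma$-phases \emph{d\'ecroissantes} admet le facteur de plus grande phase comme sous-objet~: c'est exactement une filtration de type Harder-Narasimhan, et l'objet obtenu est \emph{instable} d\`es que les phases ne sont pas toutes \'egales. Dans l'ordre oppos\'e, il faut des extensions non scind\'ees, qui peuvent ne pas exister~: pour deux objets sph\'eriques non isomorphes $A_i$, $A_j$ avec $\Hom(A_i,A_j)=\Hom(A_j,A_i)=0$ et $(v(A_i),v(A_j))=0$, on a $\Ext^1(A_i,A_j)=0$ et la seule extension est la somme directe, qui est instable. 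C'est pr\'ecis\'ement le ph\'enom\`ene des murs totalement semistables analys\'e par Bayer-Macr\`i, o\`u le rem\`ede est un automorphisme de $D^b(X)$ (twist sph\'erique) et non une extension~; aucune construction \'el\'ementaire ne franchit les murs en g\'en\'eral. Le contenu r\'eel de la preuve de Toda est l'invariance par wall-crossing d'un invariant de comptage motivique de $\mathfrak M_\sigma(v)$ (alg\`ebre de Hall motivique, formules de Joyce), dont la non-nullit\'e \`a grand volume se propage alors \`a tout $\Stab^0(X)$. \`A l'inverse, la fermeture de l'ensemble $U_v$ des conditions admettant un objet semistable de classe $v$, que vous pr\'esentez comme l'obstacle principal, est la moiti\'e formelle de l'argument~: par finitude locale des murs (proposition \ref{proposition:murs-chambres}), pour $n$ grand, $\sigma_n$ et $\sigma$ appartiennent \`a l'adh\'erence d'une m\^eme chambre, sur laquelle l'ensemble des objets semistables de classe $v$ est constant puis ferm\'e par la proposition \ref{proposition:topologie-stable}~; on peut donc prendre $A_n$ lui-m\^eme comme t\'emoin en $\sigma$, sans bornitude ni argument de compl\'etude \`a la Langton.
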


L'énoncé correspondant pour la stabilité à la Gieseker est dû à Kuleshov \cite{Kuleshov90}, O'Grady \cite{Ogrady97} et Yoshioka \cite{Yoshioka01}. Dans le cas qui nous occupe, il est dû à Toda \cite{Toda08}, voir \cite[Theorem 6.8]{BayerMacri14proj}. L'idée est de se ramener au cas de la stabilité à la Gieseker en considérant des conditions de stabilité $\sigma_{n\omega, \beta}$ avec $n\ra \infty$, puis d'étudier en détail le wall-crossing pour passer de $\sigma_{n\omega, \beta}$ à une condition de stabilité arbitraire dans la composante connexe $\Stab^0(X)$. Ce type d'argument utilise les espaces de modules d'objets semistables dont nous discuterons les propriétés dans la section suivante. Il s'agit de montrer que certains invariants motiviques des espaces de modules sont préservés par wall-crossing.

Soit maintenant $\sigma$ un élément de $\Stab^0(X)$ de charge centrale $Z$. Si $\delta$ est une racine de $\Lambda$, la proposition montre que $\delta$ est de la forme $v(A)$ pour $A$ $\sigma$-semistable. Cela implique $Z(v(A))\neq 0$ par condition de support. Par ailleurs, supposons par l'absurde que $\Ker Z$ ne soit pas un sous-espace défini négatif de $\Lambda_\R$. Quitte à remplacer $Z$ par une petite déformation, on peut supposer qu'il existe un élément $v$ de $\Lambda$ tel que $v^2\geq 0$ et $Z(v)=0$, ce qui est là encore une contradiction.
\end{proof}

La géométrie du revêtement 
$$\Stab^0(X)\ra \cP_0^+(X)$$
est mal comprise. Dans \cite{Bridgeland08}, Bridgeland montre que le groupe des transformations de ce revêtement s'identifie au groupe des automorphismes de $D^b(X)$ qui laissent la composante $\Stab^0(X)$ globalement invariante et agissent trivialement sur $\Lambda$. Il conjecture que $\Stab^0(X)$ est simplement connexe, et globalement invariant par tous les automorphismes de $D^b(X)$. Cette conjecture est démontrée dans \cite{BayerBridgeland17} quand $X$ a nombre de Picard $1$.

\section{Espaces de modules}

\subsection{Généralités}

Nous décrivons brièvement les problèmes de modules associés aux conditions de stabilité à la Bridgeland. Par rapport à la construction des espaces de modules de faisceaux stables au sens de Gieseker, deux difficultés supplémentaires apparaissent : il semble difficile en général de borner le problème de modules et, même en supposant une telle finitude, l'espace de modules n'est pas a priori obtenu par une construction de théorie géométrique des invariants. En particulier, rien ne garantit qu'il soit projectif. 

Les techniques en jeu dans les énoncés qui suivent étant assez différentes de celles discutées dans ce texte, nous ne donnons pas de preuve.

\bigskip

Soit $X$ une variété projective lisse sur $\C$. Si $S$ est un schéma --- ou un espace algébrique --- localement de type fini sur $\C$, rappelons qu'un \emph{complexe $S$-parfait} est un complexe (pas nécessairement borné) de faisceaux quasi-cohérents sur $X\times S$ qui est, localement sur $S$, quasi-isomorphe à un complexe borné de faisceaux plats de présentation finie. On dira aussi qu'il s'agit d'une famille plate d'objets de $D^b(X)$.

Soit $\mathfrak M$ le $2$-foncteur en groupoïdes qui envoie un schéma $S$ localement de type fini sur $\C$ sur le groupoïde des complexes $S$-parfaits $E$ tels que pour tout $i<0$, et tout point complexe $s$ de $S$, on ait
$$\Hom(E_s, E_s[i])=0.$$

D'après \cite{Lieblich06}, $\mathfrak M$ est un champ d'Artin, localement de type fini sur $\C$.

\bigskip

Soit $\sigma$ une condition de stabilité (numérique) sur $D^b(X)$, et soit $v$ une classe d'équivalence numérique d'éléments de $K(X)$. Soit $Z$ la charge centrale associée à $\sigma$, et soit~$\phi$ un nombre réel tel que $Z(v)\in \R^*_+e^{i\phi}.$ On note $\mathfrak M_\sigma(v, \phi)$ le sous-champ de $\mathfrak M$ qui paramètre les objets $\sigma$-semistables dans $D^b(X)$, de classe $v$ et de phase $\phi$. On note~$\mathfrak M^s_\sigma(v, \phi)$
le sous-champ qui paramètre les objets stables.

\bigskip

Dans cette généralité, on sait très peu de choses sur les champs $\mathfrak M_\sigma(v, \phi)$ et $\mathfrak M^s_\sigma(v, \phi)$. Voici un exemple important où l'on peut contrôler ces objets.

Supposons que $X$ est de dimension $2$. Soit $\omega$ un élément du cône ample de $\NS(X)_\R$, et $\beta$ un élément de $\NS(X)_\R$. Soit $\sigma_{\omega, \beta}$ la condition de stabilité du théorème \ref{theoreme:existence-stab}, et notons $\mathfrak M_{\omega, \beta}(v, \phi)$ et $\mathfrak M^s_{\omega, \beta}(v, \phi)$ pour les champs correspondant à $\sigma_{\omega, \beta}$. 

\begin{theo}[\cite{Toda08, PiyaratneToda16}]\label{theorem:existence-modules}
Le champ $\mathfrak M_{\omega, \beta}(v, \phi)$ est un champ d'Artin de type fini sur $\C$, et $\mathfrak M^s_{\omega, \beta}(v, \phi)$ en est un ouvert.

Le champ $\mathfrak M^s_{\omega, \beta}(v, \phi)$ est une $\mathbb G_m$-gerbe au-dessus d'un espace algébrique $M^s_{\omega, \beta}(v, \phi)$, qui est propre si $\mathfrak M_{\omega, \beta}(v, \phi)=\mathfrak M^s_{\omega, \beta}(v, \phi)$.
\end{theo}

Les champs $\mathfrak M_{\omega, \beta}(v, \phi)$ et $\mathfrak M^s_{\omega, \beta}(v, \phi)$ ne dépendent pas, à isomorphisme canonique près, du choix de $\phi$ --- on les notera donc $\mathfrak M_{\omega, \beta}(v)$ et $\mathfrak M^s_{\omega, \beta}(v)$.

\begin{rema}
Si $X$ est une surface $\mathrm{K}3$, nous nous écarterons légèrement des notations ci-dessus et remplacerons $v$ par $v\sqrt{\td_X}.$
\end{rema}

Ce qui suit permet de relier espaces de modules au sens de la stabilité classique et au sens précédent. Par souci de simplicité, nous ne considérons que le cas où $\beta=0$, le cas général en étant une modification immédiate.

\begin{prop}\label{proposition:large-volume}
Avec les notations précédentes, supposons le rang de $v$ non nul, et $\mu_{\omega}(v)>0$. Alors, pour tout réel $t$ suffisamment grand, le champ $\mathfrak M_{t\omega}(v)$ admet pour espace de modules grossier l'espace de modules des faisceaux sans torsion semistables au sens de Gieseker.
\end{prop}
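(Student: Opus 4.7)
Le plan est de montrer que les ensembles d'objets sous-jacents aux champs $\mathfrak M_{t\omega}(v)$ et au champ des faisceaux sans torsion semistables au sens de Gieseker de vecteur $v$ co�ncident pour $t$ suffisamment grand --- la convention de phase fixant le d�calage ---, ce qui entra�ne l'identification des espaces de modules grossiers via la structure commune de $\mathbb G_m$-gerbe sur la partie stable.

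Le calcul de base est d�j� contenu dans la discussion pr�c�dant la d�finition de $Z_{\omega, \beta}$ en \ref{subsection:surfaces} : pour $\beta = 0$, on a
$$Z_{t\omega}(A) = \Big(\frac{t^2}{2}\mathrm{rg}(A)\omega^2 - \ch_2(A)\Big) + i\,t\,\omega.c_1(A),$$
et comparer asymptotiquement $\phi_{t\omega}(B) \leq \phi_{t\omega}(A)$ pour deux faisceaux de pentes $\mu_\omega$ strictement positives revient, au premier ordre non trivial en $t^{-1}$, � comparer d'abord les pentes, puis, � pentes �gales, les polyn�mes $P_{\omega,\cdot}$ pour $n$ grand --- soit exactement la d�finition \ref{definition:stable-classique} de la semistabilit� � la Gieseker.

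J'�tablirais alors les deux inclusions. Un faisceau $E$ sans torsion Gieseker-semistable de vecteur $v$ est $\mu_\omega$-semistable de pente $\mu_\omega(v)>0$, donc appartient � $\cT_{t\omega, 0}$ et au c\oe{}ur $\mathrm{Coh}^{t\omega, 0}(X)$ ; le calcul asymptotique ci-dessus, appliqu� aux sous-faisceaux coh�rents provenant des sous-objets dans le c\oe{}ur, fournit la $\sigma_{t\omega}$-semistabilit� pour $t\gg 0$. R�ciproquement, pour un objet $\sigma_{t\omega}$-semistable $E$ de vecteur $v$, la phase de $E$ tend vers $0^+$ quand $t\to\infty$, tandis que les sous-objets possibles issus soit d'un $H^{-1}(E)[1]$ non nul (phase tendant vers $1$), soit de la torsion de $H^0(E)$ (phase tendant vers $1$ si le support est de dimension $0$, vers $1/2$ s'il est de dimension $1$), auraient phase strictement sup�rieure pour $t$ grand, contredisant la $\sigma_{t\omega}$-semistabilit� ; donc $E$ est un faisceau sans torsion, dont la semistabilit� � la Gieseker r�sulte � nouveau du calcul asymptotique de phases.

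La principale difficult� est l'uniformit� en $t$ : il faut un seuil $T$ ind�pendant des sous-objets d�stabilisants potentiels. Ceci rel�ve de la finitude locale des murs (proposition \ref{proposition:murs-chambres}) appliqu�e au rayon $t \mapsto \sigma_{t\omega}$, contr�l�e par la condition de support (corollaire \ref{corollaire:uniformite-support}) et l'in�galit� de Bogomolov (th�or�me \ref{theoreme:bogomolov}) appliqu�e aux facteurs de Harder-Narasimhan : ensemble, elles bornent les vecteurs de Mukai des sous-objets pertinents et confinent les murs � $t$ born�. Au-del� de ce seuil, l'identification des ensembles d'objets est acquise, donc celle des champs et de leurs espaces de modules grossiers.
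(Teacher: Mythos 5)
Your proposal is correct and takes essentially the same route as the paper, whose proof simply invokes Bridgeland's Proposition 14.2 together with the asymptotic phase computation from the beginning of the section on surfaces to identify the $\sigma_{t\omega}$-semistable objects of class $v$ with the torsion-free Gieseker-semistable sheaves; your two inclusions are exactly that identification, spelled out. The only imprecision is in the uniformity step: the local finiteness of walls in Proposition \ref{proposition:murs-chambres} concerns compact subsets of $\Stab(\cD)$ and does not by itself confine the walls met by the non-compact ray $t\mapsto\sigma_{t\omega}$ to a bounded range of $t$ --- that confinement is precisely the quantitative content of the Bogomolov-plus-support estimate on the classes of potential destabilizers that you also cite, and it is where the real work of Bridgeland's Proposition 14.2 lies.
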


\begin{proof}
C'est essentiellement \cite[Proposition 14.2]{Bridgeland08}. Les arguments du début de \ref{subsection:surfaces} montrent directement que, sous les hypothèses de la proposition, les objets $\sigma_{t\omega}$-semistables de classe $v$ de $\mathrm{Coh}^{t\omega}(X)$ sont exactement les faisceaux sans torsion sur~$X$ qui sont semistables au sens de Gieseker, ce qui est ce que l'on voulait démontrer.
\end{proof}

\subsection{Le lemme de positivité}\label{subsection:positivite}

Soit $X$ une variété projective lisse sur $\C$. Soit $S$ un espace algébrique propre, irréductible sur $\C$ --- dans la discussion ci-dessous, nous supposerons $S$ lisse et projectif pour éviter l'introduction de complexes parfaits.

Soit $E$ un objet de $D^b(X\times S)$, dont on note $[E]$ la classe dans $K(X\times S)_{\mathrm{num}}$. Soient~$p$ et~$q$ les projections de $X\times S$ sur $X$ et $S$ respectivement. On définit, comme dans le cas où $E$ est un faisceau \cite{LePotier92}, \cite[8.1]{HuybrechtsLehn10}
$$\lambda_E : K(X)_{\mathrm{num}}\ra \NS(S)$$
par la composition 
\[
\xymatrix{
K(X)_{\mathrm{num}}\ar[r]^{p^*} & K(X\times S)_{\mathrm{num}}\ar[r]^{.[E]} & K(X\times S)_{\mathrm{num}}\ar[r]^{q_*} &K(S)_{\mathrm{num}}\ar[r]^{\det} & \NS(S).
}
\]

Soit $v$ la classe de $E_s$ dans $K(X)_{\mathrm{num}}$, où $s$ est un point quelconque de $s$. Le calcul de \cite[Lemma 8.1.2, (iv)]{HuybrechtsLehn10} montre que si $F$ est un faisceau localement libre de rang $r$ sur $X$, on a 
\begin{equation}\label{equation:fonctor-lambda}
(\lambda_E)_{|v^\perp}=\frac{1}{r}(\lambda_{E\otimes p^*F})_{|v^\perp}.
\end{equation}

Dans le cas classique, la flèche $\lambda_E$ est utilisée pour produire des fibrés amples sur $S$. On peut faire de même dans le cas des familles d'objets stables --- c'est le contenu de \cite[Theorem 1.1]{BayerMacri14proj}. 

Donnons-nous donc une condition de stabilité $\sigma$ sur $D^b(X)$, $v$ un élément de $K(X)_{\mathrm{num}}$, et $E\in \mathrm{Ob}(D^b(X\times S))$ une famille plate d'objets $\sigma$-semistables de classe $v$ dans $D^b(X)$. 

Soit $Z$ la charge centrale de $\sigma$. Soit $\Omega\in K(X)_{num, \R}$ tel que pour tout $w\in K(X)_{\mathrm{num}}$, on ait
$$\mathrm{Im} \Big(-\frac{Z(w)}{Z(v)}\Big)=\chi(\Omega, w),$$
où $\chi$ est la caractéristique d'Euler. En particulier, $\Omega\in v^\perp$. 

On note 
$$\ell_{\sigma, E}=\lambda_E(\Omega)\in \NS(S)_\R.$$
Des arguments standard utilisant (\ref{equation:fonctor-lambda}) montrent que cette construction définit une classe d'équivalence numérique de fibrés en droites $\ell_\sigma$ sur le champ $\mathfrak M_\sigma(v)$, et sur l'espace de module grossier $M_\sigma(v)$ s'il existe.

La construction de $\ell_{\sigma, E}$ montre immédiatement :

\begin{prop}
Si $\sigma$ est rationnelle, alors $\ell_{\sigma, E}$ est rationnel.
\end{prop}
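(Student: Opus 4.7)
L'approche est directe et repose essentiellement sur la non-d�g�n�rescence de la forme d'Euler $\chi$ sur $K_{\mathrm{num}}(X)$ rappel�e au d�but de la section~3. Le plan est de montrer successivement la rationalit� du vecteur auxiliaire $\Omega\in K(X)_{\mathrm{num},\R}$, puis d'en d�duire celle de $\ell_{\sigma,E}=\lambda_E(\Omega)$ via la $\Z$-lin�arit� de $\lambda_E$.

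Je commencerais par observer que, $\sigma$ �tant rationnelle, la charge centrale $Z$ est � valeurs dans $\Q[i]$. Comme $Z(v)$ est non nul (c'est la charge d'un objet $\sigma$-semistable non nul), le quotient $-Z(w)/Z(v)$ appartient � $\Q[i]$ pour tout $w\in K(X)_{\mathrm{num}}$, et sa partie imaginaire est donc rationnelle. Par d�finition de $\Omega$, la forme lin�aire
$$w\longmapsto \chi(\Omega, w)$$
sur $K(X)_{\mathrm{num}}$ est par cons�quent � valeurs dans $\Q$.

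Ensuite, j'invoquerais le fait que la forme d'Euler $\chi$ induit une forme bilin�aire non d�g�n�r�e sur $K_{\mathrm{num}}(X)$ (cons�quence de la dualit� de Serre, mentionn�e dans la section introductive du chapitre 3). Elle induit ainsi un isomorphisme $K(X)_{\mathrm{num},\Q}\lrasim \Hom(K(X)_{\mathrm{num}}, \Q)$ apr�s extension des scalaires � $\Q$. Le vecteur $\Omega\in K(X)_{\mathrm{num},\R}$, caract�ris� par la forme lin�aire rationnelle ci-dessus, appartient donc n�cessairement � $K(X)_{\mathrm{num},\Q}$.

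Enfin, la construction de $\lambda_E$ via pullback, tensorisation par $[E]$, pushforward et $\det$ est manifestement $\Z$-lin�aire : $\lambda_E$ est d�finie comme un morphisme de groupes $K(X)_{\mathrm{num}}\ra \NS(S)$, qui se prolonge par $\R$-lin�arit� et envoie donc $K(X)_{\mathrm{num},\Q}$ dans $\NS(S)_\Q$. On en conclut que $\ell_{\sigma,E}=\lambda_E(\Omega)\in\NS(S)_\Q$, c'est-�-dire que $\ell_{\sigma,E}$ est rationnel.

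Il n'y a pas d'obstacle s�rieux : la seule v�rification d�licate est la non-d�g�n�rescence de $\chi$ sur $K(X)_{\mathrm{num},\Q}$, mais celle-ci est rappel�e au d�but de la section~3 et est pr�cis�ment la raison pour laquelle on a quotient� par l'�quivalence num�rique. Le reste n'est qu'une manipulation formelle : rationalit� de la charge centrale $\Longrightarrow$ rationalit� de la fonctionnelle $\chi(\Omega,-)$ $\Longrightarrow$ rationalit� de $\Omega$ $\Longrightarrow$ rationalit� de $\lambda_E(\Omega)$.
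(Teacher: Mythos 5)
Votre argument est correct et correspond exactement � ce que le texte sous-entend lorsqu'il affirme que l'�nonc� suit � imm�diatement � de la construction de $\ell_{\sigma,E}$ : rationalit� de $Z$ donc de la fonctionnelle $w\mapsto\chi(\Omega,w)$, puis rationalit� de $\Omega$ par non-d�g�n�rescence de $\chi$ sur $K_{\mathrm{num}}(X)$, et enfin $\Z$-lin�arit� de $\lambda_E$. Rien � redire.
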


\bigskip

Le théorème fondamental est le suivant.

\begin{theo}[Lemme de positivité]\label{theoreme:lemme-pos}
La classe $\ell_{\sigma, E}$ est nef. De plus, si $C$ est une courbe dans $S$, alors $\ell_{\sigma, E}.C=0$ si et seulement si pour tous points complexes $s$ et $s'$ généraux de $S$, les objets $\sigma$-semistables $E_{s}$ et $E_{s'}$ ont les mêmes facteurs de Jordan-Hölder à permutation près. 
\end{theo}

\begin{proof}
Donnons une idée de l'argument. Quitte à faire agir le groupe $\widetilde{GL}_2^+(\R)$, on peut supposer $Z(v)=-1$, de sorte que la classe $\Omega$ ci-dessus soit caractérisée par 
$$\chi(\Omega, w)=\mathrm{Im} Z(w)$$
pour tout $w$. 

Étant donnée une courbe irréductible et réduite $C$ dans $S$, un calcul via la formule de projection donne
$$\ell_{\sigma, E}.C=\mathrm{Im}(Z(p_*E_{|X\times C})).$$ 
On peut donc supposer $S=C$.

\bigskip

Soit $\cA$ le c\oe{}ur de $\sigma$. Si $p_*E$ est un objet de $\cA$, alors bien sûr $\mathrm{Im}(Z(p_*E))\geq 0$ par définition d'une condition de stabilité. 

En général, $p_*E$ n'est pas un objet de $\cA$, mais un argument délicat dû à Abramovich-Polishchuk et Polishchuk \cite{AP06, Polishchuk07} permet de montrer que
$$p_*(E\otimes q^*\cO(ns))$$
est un objet de $\cA$, où $s$ est un point lisse de $S$ et $n$ est un entier suffisamment grand. Pour conclure, il suffit de montrer que pour tout $n$ positif, on a 
$$\mathrm{Im}(Z(p_*(E\otimes q^*\cO(ns))))=\mathrm{Im}(Z(p_*E)),$$
donc que pour tout $n$ positif,
$$\mathrm{Im}(Z(p_*(E\otimes q^*\cO(ns))))=\mathrm{Im}(Z(p_*(E\otimes q^*\cO((n+1)s)))).$$
Les deux termes diffèrent manifestement de la quantité
$$\mathrm{Im}(Z(p_*(E_{s})))=\mathrm{Im}(Z(E_s))=\mathrm{Im}(Z(v))=0$$
car $Z(v)=-1$. Cela montre le premier énoncé.

\bigskip

Montrons une des deux directions du second énoncé : supposons que $S$ soit une courbe irréductible et que l'on ait 
$$Z(p_*E))\in\R.$$
Soit $s$ un point lisse de $S$. L'argument précédent montre que tous les $Z(p_*(E\otimes q^*\cO(ns)))$ sont des nombres réels négatifs --- en particulier, les $p_*(E\otimes q^*\cO(ns))$ sont semistables. La suite exacte dans $\cA$, pour $n$ assez grand,
$$0\ra p_*(E\otimes q^*\cO(ns))\ra p_*(E\otimes q^*\cO((n+1)s))\ra E_s$$
montre que les facteurs de Jordan-Hölder de $E_s$ sont des facteurs de Jordan-Hölder de $p_*(E\otimes q^*\cO((n+1)s))$. En particulier, ils ne dépendent pas de $s$, ce qui conclut.
\end{proof}

\section{Géométrie birationnelle de certaines variétés symplectiques holomorphes}

Dans cette section, on décrit les travaux de Bayer-Macr\`i \cite{BayerMacri14proj, BayerMacri14}.

\subsection{Variétés symplectiques holomorphes : rappels}

On renvoie à \cite{Beauville83, Huybrechts99, Markman11} pour plus de détails sur les variétés symplectiques holomorphes et leur importance en géométrie algébrique et complexe, et à \cite{HassettTschinkel09} pour des éléments de leur géométrie birationnelle.

\begin{defi}
Une variété symplectique holomorphe est une variété $X$ projective lisse sur $\C$, simplement connexe, telle que l'espace $H^0(X, \Omega^2_{X/\C})$ est engendré par une $2$-forme partout non dégénérée.
\end{defi}

En particulier, une variété symplectique holomorphe est de dimension paire, et son fibré canonique est trivial. Il suit qu'une application birationnelle entre variétés symplectiques holomorphes induit un isomorphisme entre les complémentaires de fermés de codimension $2$. En particulier, une telle application induit un isomorphisme entre les groupes de Néron-Severi des deux variétés.

\begin{rema}
Une variété symplectique holomorphe de dimension $2$ est une surface~$\mathrm{K}3$.
\end{rema}

Si $X$ est une variété symplectique holomorphe de dimension $2n$, le groupe de cohomologie singulière (modulo torsion) $H^2(X, \Z)$ est muni d'une forme quadratique naturelle~$q$, la \emph{forme de Beauville-Bogomolov}. Elle est de signature $(3, b-3)$ et se caractérise par l'existence d'une constante $C>0$ telle que pour tout $\alpha\in H^2(X, \Z)$, on ait
$$\int_X\alpha^{2n}=C q(\alpha)^n.$$

\bigskip

Fixons une variété symplectique holomorphe $X$ de dimension $2n$. Le \emph{cône ample} de $X$, noté $\Amp(X)$, est le cône dans $\NS(X)_\R$ engendré par les classes de diviseurs amples dans $X$. Le \emph{cône ample birationnel} de $X$, noté $\BAmp(X)$, est la réunion des $f^*\Amp(X')$, où $f : X\dashrightarrow X'$ parcourt l'ensemble des applications birationnelles de $X$ vers une variété symplectique holomorphe $X'$. Le cône ample birationnel n'a pas de raison d'être un cône, mais son adhérence en est un.

Notons par ailleurs $\BIG(X)$ et $\Mov(X)$ les cônes dans $\NS(X)_\R$ engendrés par les classes de diviseurs big et mobiles --- i.e. dont le lieu base d'une puissance positive est de codimension au moins $2$ --- respectivement.

On vérifie immédiatement que le cône ample birationnel de $X$ est contenu dans l'intersection de $\BIG(X)$ et $\Mov(X)$. 

Soit par ailleurs $D$ un élément du cône big de $X$. Alors la paire $(X, D)$ est de log-type général, on peut donc appliquer le programme du modèle minimal à la paire $(X, \lambda D)$ pour $\lambda>0$ suffisamment petit --- \cite{BCHM10} et \cite{MatsushitaZhang13} pour ce cas précis --- et en déduire l'existence d'un morphisme birationnel entre variétés symplectiques holomorphes
$$f : X'\dashrightarrow X$$
tel que $f^*D$ est big et nef. On vérifie que si de plus $f^*D$ est ample, alors $X'$ est unique. On dira que $(X', f^*D)$ est un modèle minimal de $(X, D)$.

\bigskip

Soit $X$ une surface $\mathrm{K}3$, soit $\Lambda$ le réseau de Mukai de $X$, et soit $v$ un élément primitif de $\Lambda$. Soient $\omega$ un élément du cône ample de $X$, et $\beta$ un élément de $\NS(X)_\R$. On étend la définition \ref{definition:stable-classique} de semistabilité (au sens de Gieseker) en remplaçant 
$$P_{\omega, A} : n\mapsto \int_X e^{n\omega}\ch(A)\td_X$$
par 
$$P_{\omega, A}^\beta : n\mapsto \int_X e^{n\omega}\ch^\beta(A)\td_X.$$

On note $M_\omega^{\beta}(v)$ l'espace de modules grossier des faisceaux sur $X$ semistables correspondant.

Le théorème suivant suit de \cite{Kuleshov90, Ogrady97, Yoshioka01}. Disons que $v=(r, c, s)\in\Lambda=\Z\oplus \NS(X)\oplus\Z$ est \emph{strictement positif} si $v^2\geq -2$ et si l'une des conditions suivantes est vérifiée :
\begin{enumerate}
\item $r>0$;
\item $r=0$, $c$ est effectif et $s\neq 0$;
\item $r=0, c=0$ et $s>0.$
\end{enumerate}

\begin{theo}\label{theorem:basic-module-K3}
On reprend les notations précédentes : 
\begin{enumerate}
\item Si $v$ est de la forme $nv'$, avec $n>0$ et $v'$ strictement positif, alors $M_\omega^{\beta}(v)$ est non-vide et projectif.
\item Si de plus $v$ est primitif et $\omega$ est générique, alors $M_\omega^{\beta}(v)$ paramètre des objets stables, et c'est une variété symplectique holomorphe de dimension $v^2+2.$
\end{enumerate}

Supposons la seconde condition vérifiée, et $v^2> 0$. Alors il existe une isométrie canonique
$$\theta : v^\perp\ra \NS(M_\omega^\beta(v)),$$
où l'espace $\NS(M_\omega^\beta(v))$ est muni de la forme de Beauville-Bogomolov. 

Si $v^2=0$, il existe une isométrie canonique
$$\theta : v^\perp/\langle v\rangle\ra \NS(M_\omega^\beta(v)).$$
\end{theo}

\begin{rema}
Dans le théorème, nous ne considérons que le groupe de Néron-Severi $\NS(M_\omega^\beta(v))$ et non toute la cohomologie de degré $2$. La notation $v^\perp$ désigne donc l'orthogonal de $v$ dans $\Lambda$, la partie algébrique du réseau de Mukai usuel.
\end{rema}

\begin{rema}
Nous n'aurons pas besoin de préciser la condition de généricité sur~$\omega$ --- elle vaut sur un ouvert dense du cône ample de $X$.
\end{rema}

\begin{rema}\label{remarque:theta-lambda}
L'isométrie $\theta$ est induite à un facteur strictement positif près par la flèche $-\lambda_E$ de \ref{subsection:positivite}, où $E$ est une famille quasi-universelle sur $X\times M_\omega^\beta(v)$ au sens de \cite[4.6]{HuybrechtsLehn10}.
\end{rema}

\subsection{Projectivité et géométrie des espaces de modules}\label{subsection:proj}

Soit $X$ une surface $\mathrm{K}3$, soit $\Lambda$ le réseau de Mukai de $X$, et soit $v$ un élément de $\Lambda$. On dit qu'une condition de stabilité numérique $\sigma$ sur $X$ est \emph{générique} (par rapport à $v$) si $\sigma$ appartient à une chambre de la décomposition de $\Stab(X)$ associée à $v$ par la proposition~\ref{proposition:murs-chambres}. On note encore $\Stab^0(X)$ la composante connexe de $\Stab(X)$ contenant les $\sigma_{\omega, \beta}$.

\bigskip

On peut généraliser le théorème \ref{theorem:existence-modules} comme suit dans le cas des surfaces $\mathrm{K}3$, par le même genre d'arguments.

\begin{theo}[\cite{Toda08}]\label{theorem:existence-modules-K3}
Soit $\sigma\in \Stab^0(X)$. Le champ $\mathfrak M_{\sigma}(v)$ est un champ d'Artin de type fini sur $\C$, et $\mathfrak M^s_{\sigma}(v)$ en est un ouvert.

Le champ $\mathfrak M^s_{\sigma}(v)$ est une $\mathbb G_m$-gerbe au-dessus d'un espace algébrique $M^s_{\sigma}(v)$, qui est propre si $\mathfrak M_{\sigma}(v)=\mathfrak M^s_{\sigma}(v)$.
\end{theo}

La proposition \ref{proposition:toda-non-vide} garantit que les champs $\mathfrak M_\sigma(v)$ sont non-vides dès que $v^2\geq -2$. De plus, un argument de déformation montre le résultat suivant.

\begin{prop}\label{proposition:lisse-facile}
Supposons $v$ primitif et $v^2\geq -2.$ Si $\sigma$ est générique, alors $\mathfrak M_{\sigma}(v)=\mathfrak M^s_{\sigma}(v)$. L'espace de modules grossier $M_\sigma(v):=M^s_\sigma(v)$ est un espace algébrique symplectique holomorphe de dimension $v^2+2$.
\end{prop}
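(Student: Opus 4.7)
La strat�gie se divise en deux parties.

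Premi�rement, pour l'�galit� $\mathfrak M_\sigma(v) = \mathfrak M_\sigma^s(v)$, on raisonne par l'absurde en supposant l'existence d'un objet $A$ strictement $\sigma$-semistable de classe $v$, de phase~$\phi$. D'apr�s la proposition~\ref{proposition:cP-ab} et la longueur finie de $\cP(\phi)$, $A$ admet un sous-objet propre non nul $B$ dans $\cP(\phi)$; en particulier $v(B) \neq 0$, car $Z$ se factorise par $v$ et $Z(B) \neq 0$. Si $v(B)$ �tait proportionnel � $v$ dans $\Lambda_\R$, la primitivit� de $v$ forcerait $v(B) = mv$ avec $m \in \Z$, d'o� $v(A/B) = (1-m)v$; mais $B$ et $A/B$ �tant des objets non nuls de $\cP(\phi)$, les nombres $Z(B)$ et $Z(A/B)$ sont tous deux dans $\R^*_+ e^{i\pi\phi}$, ce qui imposerait $m>0$ et $1-m>0$ simultan�ment, contradiction. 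Donc $v(B)$ et $v$ sont lin�airement ind�pendants, et la proposition~\ref{proposition:murs-chambres} exhibe alors un mur de la d�composition associ�e � $v$ contenant $\sigma$, contredisant la g�n�ricit�.

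Deuxi�mement, une fois la stabilit� acquise, le th�or�me~\ref{theorem:existence-modules-K3} garantit que $M_\sigma(v)$ est un espace alg�brique propre, et la proposition~\ref{proposition:toda-non-vide} en assure la non-vacuit� puisque $v^2 \geq -2$. Les propri�t�s restantes --- lissit�, dimension, structure symplectique --- rel�vent du calcul d�formationnel � la Mukai, sp�cifique aux $\mathrm{K}3$. Soit $A$ un objet $\sigma$-stable: un argument de type Schur (tout endomorphisme non nul a pour image un objet de m�me phase, donc est un isomorphisme) donne $\Hom(A,A) = \C$; la trivialit� de $\omega_X$ et la dualit� de Serre impliquent $\Ext^2(A,A) \cong \Hom(A,A)^\vee = \C$ et $\Ext^i(A,A) = 0$ pour $i \notin \{0,1,2\}$. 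En combinant $\chi(A,A) = -(v,v) = -v^2$ avec $\chi(A,A) = 2 - \dim \Ext^1(A,A)$, on obtient $\dim \Ext^1(A,A) = v^2 + 2$.

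Pour la lissit�, on utilise que la fl�che trace $\Ext^2(A,A) \to H^2(X, \cO_X)$ est un isomorphisme (les deux espaces sont de dimension $1$, et la trace est non nulle via la dualit� de Serre). D'apr�s les travaux d'Inaba et Lieblich, l'obstruction aux d�formations de l'objet parfait $A$ vit dans la partie � trace nulle de $\Ext^2(A,A)$, donc s'annule; le champ $\mathfrak M_\sigma^s(v)$ est donc lisse de dimension $v^2+2$, et de m�me pour l'espace de modules grossier $M_\sigma(v)$. La composition de Yoneda $\Ext^1(A,A) \times \Ext^1(A,A) \to \Ext^2(A,A) \cong \C$ est non d�g�n�r�e (dualit� de Serre); l'argument classique de Mukai, utilisant les classes d'Atiyah d'une famille quasi-universelle sur $X \times M_\sigma(v)$, montre qu'elle se globalise en une $2$-forme holomorphe ferm�e et partout non d�g�n�r�e. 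La simple connexit� s'obtient par d�formation: en d�pla�ant $\sigma$ � l'int�rieur de $\Stab^0(X)$ vers une chambre o� $M_\sigma(v)$ s'identifie � un espace de modules de Gieseker via la proposition~\ref{proposition:large-volume}, on invoque le th�or�me~\ref{theorem:basic-module-K3} qui �tablit la propri�t� pour la stabilit� classique.

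Le principal obstacle est le contr�le fin de l'obstruction aux d�formations d'un objet arbitraire de $D^b(X)$, qui n�cessite de montrer qu'elle rel�ve effectivement de la composante � trace nulle de $\Ext^2$ --- r�sultat nettement plus d�licat que pour les faisceaux, bien qu'il soit d�sormais standard dans le cadre des espaces de modules de complexes. Un point d�licat subsidiaire est la r�duction au cas Gieseker par wall-crossing n�cessaire � la simple connexit�, qui repose sur l'invariance d�formationnelle globale de la g�om�trie des $M_\sigma(v)$.
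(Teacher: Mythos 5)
Le texte ne donne aucune d\'emonstration de cette proposition : il se contente d'indiquer qu'un \emph{argument de d\'eformation} suffit, en renvoyant implicitement \`a Toda et \`a Bayer--Macr\`i. Votre r\'edaction d\'etaille donc un raisonnement que l'expos\'e laisse enti\`erement implicite, et l'essentiel en est le raisonnement standard, correct : l'\'elimination des objets strictement semistables pour $\sigma$ g\'en\'erique (votre argument de non-proportionnalit\'e de $v(B)$ et $v$, fond\'e sur la primitivit\'e et la positivit\'e de $Z$ sur $\cP(\phi)$, est exactement le bon), le calcul $\dim\Ext^1(A,A)=v^2+2$ via $\chi(A,A)=-(v,v)$, la dualit\'e de Serre et le lemme de Schur, la lissit\'e par annulation de l'obstruction \`a trace nulle (Inaba, Lieblich, \`a la Mukai), et la $2$-forme symplectique par le couplage de Yoneda globalis\'e par les classes d'Atiyah.

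Le seul point r\'eellement probl\'ematique est la simple connexit\'e, qui fait partie de la d\'efinition de vari\'et\'e symplectique holomorphe adopt\'ee dans le texte. Telle que vous l'\'ecrivez, la r\'eduction au cas de Gieseker par d\'eplacement de $\sigma$ dans $\Stab^0(X)$ souffre de deux d\'efauts. D'abord, la proposition \ref{proposition:large-volume} suppose le rang de $v$ non nul et $\mu_{\omega}(v)>0$, ce qui n'est pas toujours r\'ealisable pour un $v$ donn\'e. Ensuite et surtout, en traversant des murs l'espace $M_\sigma(v)$ change : l'identification, m\^eme seulement birationnelle (ce qui suffirait pour $\pi_1$ d'espaces lisses et propres), des $M_\sigma(v)$ pour $\sigma$ dans des chambres diff\'erentes est pr\'ecis\'ement le contenu du corollaire \ref{corollaire:wall-crossing-bir}, cons\'equence du th\'eor\`eme \ref{theoreme:main}, lequel repose sur la pr\'esente proposition ; l'argument est donc circulaire dans l'ordre logique du texte. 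Chez Bayer--Macr\`i, la simple connexit\'e et le fait que $H^0(\Omega^2)$ soit de dimension $1$ s'obtiennent plut\^ot via l'\'equivalence de Fourier--Mukai $D^b(X)\simeq D^b(Y,\alpha)$ exploit\'ee dans la preuve de la projectivit\'e, qui identifie $M_\sigma(v)$ \emph{directement} \`a un espace de modules de faisceaux (tordus) stables au sens de Gieseker, sans d\'eplacer $\sigma$ \`a travers des murs. Si vous voulez une preuve autonome \`a ce stade, c'est cette identification-l\`a qu'il faut invoquer, ou bien n'\'etablir ici que la lissit\'e, la propret\'e, la dimension et l'existence de la $2$-forme symplectique.
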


Si $\sigma$ est une condition de stabilité sur $X$, notons $\ell_\sigma$ la classe d'équivalence numérique de fibrés en droites sur $\mathfrak M_\sigma(v)$ (ou sur l'espace de modules grossier associé s'il existe) de \ref{subsection:positivite}. Le théorème \ref{theoreme:lemme-pos} montre que $\ell_\sigma$ est nef.

On peut calculer $\ell_\sigma$ comme suit, en suivant la construction de \ref{subsection:positivite}. Soit $E$ un objet quasi-universel sur $M_\sigma(v)$ --- qu'on obtient par une multisection de degré $d$ de la $\G_m$-gerbe $\mathfrak M_{\sigma}(v)\ra M_{\sigma}(v).$

Soit $\Omega_\sigma\in\Lambda_\R$ tel que $(\Omega_\sigma,w)=\mathrm{Im}\Big(\frac{Z(w)}{Z(v)}\Big)$ pour tout $w$ dans $\Lambda$. Alors
\begin{equation}\label{equation:ell}
\ell_\sigma=-\frac{1}{d}\lambda_E(\Omega_\sigma).
\end{equation}

\bigskip

\begin{theo}[\cite{BayerMacri14proj}, Theorem 1.3]
Supposons $\sigma$ générique. Alors l'espace de modules grossier $M_\sigma(v)$ est une variété projective.
\end{theo}

\begin{proof}
Donnons une indication, suivant Bayer-Macr\`i dans \cite{BayerMacri14proj}, qui s'inspirent du résultat partiel de \cite{MYY14}. On traite le cas $v^2>0$. Soit $Z$ la charge centrale de $\sigma$.

On commence par montrer par un argument explicite que l'on peut supposer, quitte à remplacer $\sigma$ par une condition de stabilité dans la même chambre pour $v$, qu'il existe un élément primitif $w$ de $\Lambda$ tel que $w^2=0$ tel que $Z(v)$ et $Z(w)$ soient alignés. 

Si $\sigma$ est générique par rapport à $w$, ce que nous allons supposer pour simplifier, alors la proposition \ref{proposition:lisse-facile} montre que $M_\sigma(w)$ est une surface $\mathrm{K}3$ $Y$, qui est automatiquement projective. La catégorie dérivée $D^b(X)$ est équivalente à une catégorie de la forme $D^b(Y, \alpha)$, où $\alpha$ est un élément du groupe de Brauer de $Y$. Fixons une telle équivalence.

La catégorie $D^b(Y, \alpha)$ est une variante de $D^b(Y)$ dans laquelle les faisceaux cohérents sont remplacés par les faisceaux cohérents tordus par $\alpha$, voir \cite{Lieblich07} pour cette notion. La théorie des conditions de stabilité pour les surfaces $\mathrm{K}3$ s'étend à $D^b(Y, \alpha)$ --- on parle de surface $\mathrm{K}3$ tordue --- et l'on peut montrer que la condition d'alignement de $Z(v)$ et $Z(w)$ permet d'identifier $M_\sigma(v)$, via l'équivalence ci-dessus, à un espace de modules d'objets semistables dans $D^b(Y, \alpha)$ pour une condition de stabilité de la forme $\sigma_{\omega, \beta}$, et que de plus la numérologie de $v$ et $w$ identifie cet espace de modules à un espace de modules de faisceaux (tordus) stables au sens de Gieseker pour $(\omega, \beta)$ --- qui est projectif par la variante tordue de la construction classique.
\end{proof}

La démonstration précédente montre plus : en identifiant $M_\sigma(v)$ à un espace de modules pour la stabilité au sens de Gieseker, elle permet d'étendre, en suivant l'identification, le résultat du théorème \ref{theorem:basic-module-K3} aux espaces de modules à la Bridgeland. Prenant en compte (\ref{equation:ell}) et la remarque \ref{remarque:theta-lambda}, on trouve : 

\begin{coro}
Si $v$ est primitif, et $\sigma$ générique, alors $M_\sigma(v)$ est une variété symplectique holomorphe de dimension $v^2+2$ si $v^2\geq -2$, vide sinon. Elle paramètre des objets stables.

Si $v^2> 0$, alors il existe une isométrie canonique
$$\theta_\sigma : v^\perp\ra \NS(M_\sigma(v)),$$
où l'espace $\NS(M_\sigma(v))$ est muni de la forme de Beauville-Bogomolov. 

Si $v^2=0$, il existe une isométrie canonique
$$\theta_\sigma : v^\perp/\langle v\rangle\ra \NS(M_\sigma(v)).$$
De plus, on a $\theta_\sigma(\Omega_\sigma)=\ell_\sigma.$
\end{coro}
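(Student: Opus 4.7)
Le plan est de transporter tous les �nonc�s du th�or�me \ref{theorem:basic-module-K3} sur l'espace de modules $M_\sigma(v)$ via l'identification construite dans la preuve du th�or�me pr�c�dent, puis d'identifier explicitement l'isom�trie canonique $\theta_\sigma$ avec la construction via $\lambda_E$ de la section~\ref{subsection:positivite}. Je commencerais par s�parer les cas selon le signe de $v^2$~: si $v^2<-2$, la proposition \ref{proposition:stabilite-intersection} interdit l'existence d'objets $\sigma$-stables de classe $v$, donc aussi --- puisque $v$ est primitif et $\sigma$ g�n�rique --- d'objets $\sigma$-semistables, et $M_\sigma(v)$ est vide; si $v^2\geq -2$, la proposition \ref{proposition:toda-non-vide} assure la non-vacuit�, et la proposition \ref{proposition:lisse-facile} que $M_\sigma(v)$ est une vari�t� symplectique holomorphe de dimension $v^2+2$ qui param�tre des objets stables.

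Pour les �nonc�s concernant $\theta_\sigma$, je m'appuierais sur l'argument de projectivit� rappel� juste avant~: � $\sigma$ g�n�rique fix�e (par rapport � $v$), on dispose d'une �quivalence $\Phi : D^b(X)\lrasim D^b(Y,\alpha)$ avec $(Y,\alpha)$ une surface $\mathrm{K}3$ tordue, et d'une condition de stabilit� $\sigma_{\omega,\beta}$ sur $D^b(Y,\alpha)$ dans la m�me orbite g�n�rique, telle que $M_\sigma(v)$ s'identifie via $\Phi$ � un espace de modules $M_\omega^\beta(\Phi(v))$ de faisceaux tordus stables au sens de Gieseker. La variante tordue du th�or�me \ref{theorem:basic-module-K3} --- dont la d�monstration est parall�le au cas non tordu --- fournit alors l'isom�trie canonique
$$\theta : (\Phi(v))^\perp \ra \NS(M_\omega^\beta(\Phi(v)))$$
(ou le quotient par $\langle \Phi(v)\rangle$ dans le cas $v^2=0$), que je transporterais en une isom�trie $\theta_\sigma$ sur $v^\perp$ via $\Phi$. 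La structure enti�re est celle de la forme de Beauville-Bogomolov puisque la dimension est strictement positive et que la forme sur $\NS$ est intrins�que.

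L'�galit� $\theta_\sigma(\Omega_\sigma)=\ell_\sigma$ suivrait de la remarque \ref{remarque:theta-lambda}, qui identifie $\theta$ � un facteur strictement positif pr�s avec $-\lambda_E$ restreint � l'orthogonal de la classe, combin�e � la formule (\ref{equation:ell}) qui exprime $\ell_\sigma=-\frac{1}{d}\lambda_E(\Omega_\sigma)$. L'obstacle principal que je pr�vois est de v�rifier que l'isom�trie $\theta_\sigma$ obtenue par transport via $\Phi$ et $\sigma_{\omega,\beta}$ ne d�pend ni de $\Phi$ ni de la condition $\sigma_{\omega,\beta}$ choisies, c'est-�-dire qu'elle est canoniquement attach�e � $\sigma$. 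La cl� pour contourner cette difficult� est pr�cis�ment d'invoquer la construction intrins�que par $\lambda_E$, dont l'invariance par tensorisation (\ref{equation:fonctor-lambda}) garantit qu'elle descend en une classe d'�quivalence num�rique bien d�finie sur le champ $\mathfrak M_\sigma(v)$~; il restera alors � v�rifier, par un calcul de compatibilit� direct dans la situation de Gieseker (tordue), que cette construction co�ncide avec l'isom�trie canonique du th�or�me \ref{theorem:basic-module-K3}, ce qui ach�vera la preuve.
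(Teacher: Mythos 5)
Your proposal follows essentially the same route as the paper: the text derives the corollary precisely by transporting Theorem \ref{theorem:basic-module-K3} through the twisted-K3 identification established in the projectivity proof, and obtains $\theta_\sigma(\Omega_\sigma)=\ell_\sigma$ from (\ref{equation:ell}) together with Remark \ref{remarque:theta-lambda}. Your additional care about independence of the choice of $\Phi$ and $\sigma_{\omega,\beta}$, resolved via the intrinsic $\lambda_E$ construction, is a sensible elaboration of what the paper leaves implicit.
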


Là encore, $\theta_\sigma$ est induit à un facteur près par $-\lambda_E$, avec les notations précédentes.

\begin{coro}
Si $v$ est primitif, et $\sigma$ générique, alors $\ell_\sigma$ est ample.
\end{coro}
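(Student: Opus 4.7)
The plan is to combine the nefness given by the positivity lemma with an openness argument for the map $\sigma\mapsto\ell_\sigma$. Once the image of the chamber of $\sigma$ is seen to be an open subset of $\NS(M_\sigma(v))_\R$ entirely contained in the nef cone, it must lie in the interior of the nef cone, that is, the ample cone; in particular $\ell_\sigma$ is ample.

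First, $\ell_\tau$ is nef on $M_\sigma(v)$ for every $\tau$ in the chamber of $\sigma$ relative to $v$, by Theorem~\ref{theoreme:lemme-pos}. The chamber property of Proposition~\ref{proposition:murs-chambres} guarantees that $M_\tau(v)$, together with its quasi-universal family $E$ and thus the isometry $\theta_\sigma$ of Remark~\ref{remarque:theta-lambda}, is canonically identified with $M_\sigma(v)$ throughout the chamber. We may therefore view $\tau\mapsto\ell_\tau$ as a continuous map from the chamber of $\sigma$ into the fixed real vector space $\NS(M_\sigma(v))_\R$.

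This map factors as
\[
\tau=(\cP,Z_\tau)\;\longmapsto\;Z_\tau\;\longmapsto\;\Omega_{Z_\tau}\;\longmapsto\;\theta_\sigma(\Omega_{Z_\tau})=\ell_\tau,
\]
the composition of (i) the projection $q$, a local homeomorphism onto an open set in $\Hom(\Lambda,\C)$ by Theorem~\ref{theoreme:def-Bridgeland}; (ii) the smooth map $Z\mapsto\Omega_Z$ defined on $\{Z:Z(v)\neq 0\}$ by $(\Omega_Z,w)=\mathrm{Im}(Z(w)/Z(v))$, which becomes, after normalizing $Z(v)=-1$ via the $\widetilde{GL}_2^+(\R)$-action, the Mukai-dual of the surjective linear map $Z\mapsto -\mathrm{Im}\,Z$ from $\{Z:Z(v)=-1\}$ onto the hyperplane $\{\ell\in\Hom(\Lambda,\R):\ell(v)=0\}\simeq v^\perp_\R$ (passing further to the quotient modulo $\langle v\rangle$ when $v^2=0$); and (iii) the isometry $\theta_\sigma$ of the preceding corollary. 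Each factor is an open map onto its image, so the composition sends the chamber of $\sigma$ to an open subset of $\NS(M_\sigma(v))_\R$ containing $\ell_\sigma$ and consisting entirely of nef classes. An open subset of $\NS(M_\sigma(v))_\R$ contained in the nef cone lies in its interior, so $\ell_\sigma$ is ample.

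The main subtlety is the invariance of the data $(M_\tau(v),E,\theta_\sigma)$ throughout a chamber: this is precisely what the chamber decomposition (Proposition~\ref{proposition:murs-chambres}) encodes, and it is what allows $\tau\mapsto\ell_\tau$ to be regarded as a continuous map into a single fixed vector space $\NS(M_\sigma(v))_\R$ on which the openness argument can be run. Once this is established, the argument is essentially a dimension count combined with Theorem~\ref{theoreme:lemme-pos}.
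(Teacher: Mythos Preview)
Your argument is correct and takes a genuinely different route from the paper's.

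The paper proceeds as follows: after reducing to a rational $\sigma$ (so that $\ell_\sigma$ is a rational class), it uses the full strength of the positivity lemma, namely the characterization of curves $C$ with $\ell_\sigma\cdot C=0$. Since $v$ is primitif and $\sigma$ generic, every object parametrized by $M_\sigma(v)$ is stable, hence equal to its own Jordan--H\"older filtration; thus $\ell_\sigma\cdot C=0$ would force the family to be generically constant along $C$, which is impossible on a fine (or coarse) moduli space. So $\ell_\sigma$ is strictly positive on every curve. Combined with $q(\ell_\sigma)=\Omega_\sigma^2>0$ (from the isometry $\theta_\sigma$ and the description of $\cP_0^+(X)$), this feeds into Huybrechts' ampleness criterion \cite[6.3]{Huybrechts99} for holomorphic symplectic varieties to conclude.

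Your approach, by contrast, uses only the nefness statement of Theorem~\ref{theoreme:lemme-pos}, applied to every $\tau$ in the chamber, and then exploits the openness of $\tau\mapsto\ell_\tau$ (via the factorization through $q$, the submersion $Z\mapsto\Omega_Z$, and the isometry $\theta_\sigma$) to deduce that $\ell_\sigma$ lies in the interior of the nef cone. This is more self-contained: it avoids the reduction to rational $\sigma$, the ``iff'' clause of the positivity lemma, and the external input from \cite{Huybrechts99}. The paper's argument, on the other hand, yields the sharper geometric statement that $\ell_\sigma$ has strictly positive degree on every curve, and makes explicit the role of the special structure of hyperk\"ahler varieties. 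Both arguments rest essentially on the isometry $\theta_\sigma$ of the preceding corollary.
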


\begin{proof}
On peut supposer $Z(v)=-1$, ce qui implique que $\Omega_\sigma$ est la partie imaginaire de l'élément $\alpha$ de $\Lambda_\C$ tel que $Z(w)=(\alpha, w)$ pour tout $w$.

La proposition \ref{proposition:rationnel-dense} et le caractère ouvert de l'amplitude permettent de supposer que $\sigma$ est rationnelle, donc que $\ell_\sigma$ est dans $\NS(M_\sigma(v))_\Q$. Le théorème \ref{theoreme:lemme-pos} montre que $\ell_\sigma$ a degré strictement positif sur toute courbe, et le corollaire précédent montre que, $q$ étant la forme de Beauville-Bogomolov, $q(\ell_\sigma)=\Omega_\sigma^2>0$, d'après le théorème \ref{theorem:stab-K3} et la remarque \ref{remarque:dual}. D'après \cite[6.3]{Huybrechts99}, cela implique l'amplitude de $\ell_\sigma$.
\end{proof}

\begin{rema}
L'amplitude de $\ell_\sigma$ est vraie même si $v$ n'est pas primitif.
\end{rema}

\bigskip

Supposons $v$ primitif, de carré au moins $2$. Soit $C$ une chambre de $\Stab^0(X)$ par rapport à $v$. La proposition \ref{proposition:murs-chambres} fournit une identification naturelle de tous les $M_\sigma(v)$, quand $\sigma$ varie dans $C$, à une même variété symplectique holomorphe $M$. Ce qui précède fournit une application 
$$\ell : C\ra \Amp(M), \sigma\mapsto \ell_\sigma,$$
dont on vérifie immédiatement qu'elle est analytique.

Les fibrés $\ell_\sigma$ permettent d'étudier les phénomènes de wall-crossing comme suit. Soit~$W$ un mur de $\Stab^0(X)$ par rapport à $v$, et soient $C_+$ et $C_-$ les deux chambres adjacentes à $W$. Soient $\sigma_\pm$ deux conditions de stabilité dans $C_+$ et $C_-$ respectivement, et soit $\sigma_0$ une condition de stabilité générique dans $W$. La fermeture de la semiamplitude montre que les familles quasi-universelles portées par chacun des espaces $M_{\sigma_\pm}(v)$ sont des familles d'objets $\sigma_0$-semistables, de sorte que l'on dispose de fibrés en droite $\ell_{\sigma_0, \pm}$ sur $M_{\sigma_\pm}(v)$. On a 
$$\theta_{\sigma_\pm}(\Omega_{\sigma_0})=\ell_{\sigma_0, \pm},$$
ce qui montre encore $q(\ell_{\sigma_0, \pm})>0$. Comme $\ell_{\sigma_0, \pm}$ est nef, cela implique que $\ell_{\sigma_0, \pm}$ est big par \cite[Corollary 3.10]{Huybrechts99}, puis que $\ell_{\sigma_0, \pm}$ est semiample par \cite[Theorem 3.3]{KM98}. On dispose donc de deux contractions birationelles
$$\pi_{\pm} : M_{\sigma_\pm}(v)\ra M_\pm$$
vers des variétés normales.

La géométrie des contractions $\pi_\pm$ est essentielle pour ce qui suit.

\subsection{Modèles birationnels et wall-crossing}

Nous pouvons enfin décrire les résultats de \cite{BayerMacri14}.

\begin{theo}\label{theoreme:main}
Soit $v$ un vecteur primitif de $\Lambda$ avec $v^2>0$. Soit $\sigma$ un élément générique de $\Stab_0(X)$. Alors il existe une application canonique
$$\ell : \Stab_0(X)\ra \NS(M_\sigma(v))_\R$$
vérifiant les propriétés suivantes :
\begin{enumerate}
\item $\ell$ est continue, analytique sur chaque chambre de $\Stab_0(X)$. Son image est l'intersection du cône big et du cône mobile de $M_\sigma(v)$;
\item Soit $C$ une chambre de $\Stab_0(X)$, et soit $\tau\in C$. Alors $(M_{\tau}(v), \ell_\tau)$ est un modèle minimal de $(M_{\sigma}(v), \ell(\tau))$. Il existe une application birationnelle canonique
$$f : M_{\sigma}(v)\dashrightarrow M_\tau(v)$$
telle que $\ell(\tau) = f^*\ell_\tau$, et 
$$\ell(C)=f^* \Amp(M_\tau(v)).$$
\end{enumerate}
\end{theo}

\begin{rema}
Dans le second énoncé, puisque $\ell_\tau$ est ample sur $M_{\tau}(v)$, $(M_{\tau}(v), \ell_\tau)$ est l'unique modèle minimal de $(M_{\sigma}(v), \ell(\tau))$.

Si $C$ est la chambre de $\Stab_0(X)$ contenant $\sigma$, alors $f$ est l'identification naturelle donnée par le programme du modèle minimal.
\end{rema}

\begin{rema}\label{rema:ell-et-murs}
Les conditions du théorème montrent que la flèche $\ell$ se factorise par la flèche
$$\Stab_0(X)\ra \cP_0^+(X)\subset \Lambda_\C\ra U\subset \Lambda_\R,$$
où $\Stab_0(X)\ra \cP_0^+(X)$ est la flèche qui à une condition de stabilité associe sa charge centrale, $\cP_0^+(X)\subset \Lambda_\C$ est l'inclusion de la remarque \ref{remarque:dual}, et $U$ est l'image de $\cP_0^+(X)$ par l'application partie imaginaire. Par analyticité, il suffit en effet de le montrer sur les conditions de stabilité génériques, auquel cas le résultat suit de la définition des $\ell_\sigma$.

En particulier, pour des raisons de dimension et d'analyticité de $\ell$, les murs de $\Stab_0(X)$ sont envoyés par $\ell$ sur des fermés d'intérieurs vides.
\end{rema}

\begin{rema}
Si $\sigma$ et $\tau$ ont même charge centrale, un théorème de Bridgeland déjà mentionné montre que $\sigma$ et $\tau$ diffèrent d'un automorphisme de $D^b(X)$, ce qui montre a priori que $M_\sigma(v)$ et $M_\tau(v)$ sont isomorphes.
\end{rema}

Voici un corollaire immédiat et important.

\begin{coro}\label{corollaire:wall-crossing-bir}
Soit $v$ un vecteur primitif de $\Lambda$ avec $v^2>0$, et soient~$\sigma$ et~$\tau$ deux éléments génériques de $\Stab_0(X)$. Alors $M_\sigma(v)$ et $M_\tau(v)$ sont birationnels. De plus, tout modèle birationnel symplectique holomorphe de $M_\sigma(v)$ est isomorphe à $M_\tau(v)$ pour un certain $\tau$ générique.
\end{coro}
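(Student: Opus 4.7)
Je traiterais d'abord la premi\`ere assertion, qui est essentiellement un corollaire direct du th\'eor\`eme \ref{theoreme:main}. Par g\'en\'ericit\'e, $\tau$ appartient \`a une chambre $C$ de $\Stab_0(X)$, et l'alin\'ea (2) du th\'eor\`eme fournit imm\'ediatement une application birationnelle canonique $M_\sigma(v)\dashrightarrow M_\tau(v)$, d'o\`u la birationalit\'e cherch\'ee.

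Pour le second \'enonc\'e, le plan est le suivant. Soit $M'$ une vari\'et\'e symplectique holomorphe birationnelle \`a $M_\sigma(v)$ via une application birationnelle $g$. Je chercherais un $\tau$ g\'en\'erique tel que $M_\tau(v)\simeq M'$, en utilisant l'unicit\'e du mod\`ele minimal apr\`es avoir ajust\'e la classe $\ell(\tau)$ pour qu'elle soit le tir\'e en arri\`ere par $g$ d'une classe ample sur $M'$. Les deux espaces \'etant symplectiques holomorphes, $g$ est un isomorphisme en codimension $1$, donc $g^*$ induit un isomorphisme $\NS(M')_\R\simeq \NS(M_\sigma(v))_\R$. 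Sous cette identification, $g^*\Amp(M')$ est un ouvert non vide contenu dans $\BAmp(M_\sigma(v))$, donc dans $\BIG(M_\sigma(v))\cap \Mov(M_\sigma(v))$, qui co\"incide avec l'image de $\ell$ d'apr\`es le th\'eor\`eme \ref{theoreme:main}(1).

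L'\'etape centrale, et \`a mon sens la plus d\'elicate, consiste \`a exhiber un $\tau$ g\'en\'erique avec $\ell(\tau)\in g^*\Amp(M')$. On partirait d'un ant\'ec\'edent quelconque $\tau_0$ de $g^*\alpha$ pour un $\alpha\in\Amp(M')$ fix\'e, fourni par la surjectivit\'e de $\ell$ sur $\BIG(M_\sigma(v))\cap\Mov(M_\sigma(v))$. La continuit\'e de $\ell$, le caract\`ere ouvert de $g^*\Amp(M')$ dans $\NS(M_\sigma(v))_\R$, et la remarque \ref{rema:ell-et-murs} --- selon laquelle l'image des murs par $\ell$ est un ferm\'e d'int\'erieur vide --- permettent alors de perturber $\tau_0$ en un $\tau$ g\'en\'erique avec $\ell(\tau)$ toujours dans $g^*\Amp(M')$.

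Une fois un tel $\tau$ fix\'e, posant $\alpha' = (g^*)^{-1}\ell(\tau)\in \Amp(M')$, le couple $(M',\alpha')$ est un mod\`ele minimal de $(M_\sigma(v),\ell(\tau))$ dont la classe distingu\'ee est ample. Il en va de m\^eme de $(M_\tau(v),\ell_\tau)$ par le th\'eor\`eme \ref{theoreme:main}(2), puisque $\ell_\tau$ est ample sur $M_\tau(v)$ pour $\tau$ g\'en\'erique. L'unicit\'e du mod\`ele minimal avec classe ample fix\'ee, rappel\'ee dans la remarque suivant le th\'eor\`eme \ref{theoreme:main}, entra\^ine alors $M'\simeq M_\tau(v)$, ce qui conclut.
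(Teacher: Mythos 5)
Votre d\'emonstration est correcte et suit essentiellement la m\^eme d\'emarche que celle du texte : la birationalit\'e d\'ecoule du th\'eor\`eme \ref{theoreme:main}, puis, pour un mod\`ele birationnel donn\'e, on choisit un $\tau$ g\'en\'erique avec $\ell(\tau)$ dans le tir\'e en arri\`ere du c\^one ample et l'on conclut par l'unicit\'e du mod\`ele minimal \`a classe ample. La seule diff\'erence, mineure, est que le texte d\'eforme la classe ample $H$ sur le mod\`ele cible pour rendre g\'en\'erique un ant\'ec\'edent, tandis que vous perturbez directement $\tau_0$ dans $\Stab_0(X)$ --- les deux variantes reposent sur la remarque \ref{rema:ell-et-murs}.
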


\begin{proof}
Le premier énoncé est un cas particulier du théorème. Soit par ailleurs 
$$ f : M_\sigma(v)\dashrightarrow M$$
une application birationnelle, avec $M$ symplectique holomorphe. Soit $H$ un fibré ample sur $M$, et soit $\ell'=f^*H$. Alors $\ell'$ est big et mobile, donc on peut trouver une condition de stabilité $\tau\in \Stab_0(X)$ telle que $\ell'=\ell(\tau)$. Quitte à déformer $H$ dans le cône ample de $M$, la remarque \ref{rema:ell-et-murs} permet de supposer que $\tau$ est générique.

Par construction, $(M, H)$ est l'unique modèle minimal de $(M_\sigma(v), \ell(\tau)),$ donc le théorème précédent montre que $M$ est isomorphe à $M_\tau(v)$.
\end{proof}

\bigskip

Le théorème \ref{theoreme:main} donne une relation précise entre conditions de stabilité et wall-crossing d'une part, et programme du modèle minimal pour les espaces de modules d'autre part. Ce thème a été étudié par plusieurs auteurs, d'abord via les variations de quotient GIT comme dans \cite{Thaddeus94, DolgachevHu98}. Pour les surfaces à canonique trivial, les premiers exemples de wall-crossing sont dans \cite{ArcaraBertram13}, et le caractère birationnel du wall-crossing est prouvé pour les surfaces abéliennes dans \cite{MYY18}. Le cas du plan projectif est étudié notamment dans \cite{ABCH13, LiZhao16}, voir \cite{BHLRSWZ16} pour des résultats sur certaines surfaces plus générales, et \cite{Huizenga17} pour un survol de résultats récents.

\bigskip

La preuve du théorème \ref{theoreme:main} repose sur une analyse détaillée du wall-crossing : étant donné un mur $W$, soient $C_+$ et $C_-$ les deux chambres adjacentes à $W$. Soient $\sigma_\pm$ deux conditions de stabilité dans $C_+$ et $C_-$ respectivement, et soit $\sigma_0$ une condition de stabilité générique dans $W$. Il s'agit de montrer une version raffinée du corollaire \ref{corollaire:wall-crossing-bir} pour montrer la birationalité des espaces $M_{\sigma_\pm}(v)$ par une application birationnelle explicite, qui permette d'en déduire le théorème principal par recollement du comportement local au voisinage de chaque mur.

Il s'agit de comprendre comment le mur $W$ déstabilise certains objets stables pour $\sigma_+$ et $\sigma_-$. Par construction, notant $Z_0$ la charge centrale associée à $\sigma_0$, on peut trouver $w_0\in \Lambda$, qui n'est pas proportionnel à $v$, tel que $Z(v)$ et $Z(w_0)$ sont alignés dans $\mathbb H\cap \R_-$, et des objets~$D$, $\sigma_0$-stables de classe~$w_0$ dans $D^b(X)$, qui déstabilisent des objets~$E$, \mbox{$\sigma_+$-stables} de classe $v$, au sens où $Z(E)=v$, $Z(D)=w_0$, et $D$ est un sous-objet de~$E$ dans le c\oe{}ur de~$\sigma_0$.

Avec les notations précédentes, l'hypothèse de généricité sur $\sigma_0$ montre que le réseau primitif de rang $2$ contenant $v$ et $w_0$ ne dépend que de $W$. On le note $H_W$. Les propriétés des charges centrales données par le théorème \ref{theorem:stab-K3} montrent facilement que $H_W$ est hyperbolique. L'image par $Z_0$ de $W$ est contenue dans une demi-droite passant par l'origine.

La construction de $H_W$ relie les espaces de modules $M_{\sigma_\pm}(v)$ par la proposition suivante, essentiellement formelle étant donnée la finitude locale des murs.

\begin{prop}\label{prop:HN-reseau}
Supposons $\sigma_\pm$ suffisamment proches l'une de l'autre. Alors, si $E$ est un objet $\sigma_+$-stable de $D^b(X)$, les facteurs de Harder-Narasimhan de $E$ pour $\sigma_-$ ont pour vecteur de Mukai des éléments de $H_W$.
\end{prop}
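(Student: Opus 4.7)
The plan is to reduce the claim to a local analysis near a single generic point $\sigma_0 \in W$, and to compute, in the limit $\sigma_- \to \sigma_0$, the $\sigma_0$-central charge of each Mukai class that can arise as a $\sigma_-$-HN factor of $E$. The genericity of $\sigma_0$ on $W$ will then force such classes into $H_W$.

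By local finiteness of walls (proposition~\ref{proposition:murs-chambres}), I choose $\sigma_\pm$ close enough that they can be joined by a continuous path in $\Stab^0(X)$ crossing the wall locus for $v$ only at a single point $\sigma_0$, chosen in the interior of the stratum $W$ (in particular on no other wall). After acting by $\widetilde{GL}_2^+(\R)$ I normalize $\phi_{\sigma_0}(v) = 1/2$. By closedness of semistability (proposition~\ref{proposition:topologie-stable}) and $\sigma_0 \in \overline{C_+}$, the object $E$ is $\sigma_0$-semistable of phase $1/2$.

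Suppose for contradiction that the proposition fails: there exist a sequence $\sigma_-^{(k)} \to \sigma_0$ inside $C_-$ and, for each $k$, a $\sigma_-^{(k)}$-HN factor $A^{(k)}$ of $E$ with $v(A^{(k)}) \notin H_W$. The uniform support condition on a compact neighborhood of $\sigma_0$ (corollary~\ref{corollaire:uniformite-support}) gives $|Z_{\sigma_-^{(k)}}(v(A^{(k)}))| \geq C \|v(A^{(k)})\|$ for a fixed constant $C > 0$. On the other hand, the $\sigma_-^{(k)}$-phases of the HN factors of $E$ lie in the interval $[\phi^-_{\sigma_-^{(k)}}(E), \phi^+_{\sigma_-^{(k)}}(E)]$, whose endpoints converge to $\phi_{\sigma_0}(v) = 1/2$ by continuity of $\phi^\pm$; hence for large $k$ every such phase is close to $1/2$, and the individual masses are bounded by a fixed multiple of $\mathrm{Im}\,Z_{\sigma_-^{(k)}}(v)$. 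Combining these two estimates shows that the $v(A^{(k)})$ take only finitely many values in $\Lambda$, so after passing to a subsequence I may assume $v(A^{(k)}) = w \notin H_W$ is constant. Continuity of $Z$ then gives $Z_{\sigma_0}(w) = \lim_k Z_{\sigma_-^{(k)}}(w)$, and the polar form $Z_{\sigma_-^{(k)}}(w) = |Z_{\sigma_-^{(k)}}(w)|\, e^{i\pi\phi_k}$ with $\phi_k \to 1/2$ and $|Z_{\sigma_-^{(k)}}(w)| \geq C\|w\| > 0$ forces $Z_{\sigma_0}(w)$ to be positive purely imaginary, hence a positive real multiple of $Z_{\sigma_0}(v)$. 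By the genericity of $\sigma_0$ on $W$ --- each primitive rank-two sublattice $H' \ne H_W$ of $\Lambda$ containing $v$ imposes the proper real-analytic condition $Z_\sigma(H') \subset \R\cdot Z_\sigma(v)$ on $\sigma \in W$, and $\sigma_0$ was chosen outside this countable union --- this forces $w \in H_W$, a contradiction.

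I expect the main obstacle to be the genericity statement on $W$ invoked at the end: showing that a generic $\sigma_0 \in W$ forces every $w \in \Lambda$ with $Z_{\sigma_0}(w) \in \R \cdot Z_{\sigma_0}(v)$ to belong to $H_W$. This rests on a countability argument over primitive rank-two sublattices of $\Lambda$ through $v$, together with the fact that the constraint $Z_\sigma(H') \subset \R\cdot Z_\sigma(v)$ cuts out a proper real-analytic subvariety of $W$ whenever $H' \ne H_W$; verifying that this subvariety is indeed proper uses the explicit description of $W$ in proposition~\ref{proposition:murs-chambres} and the fact that $W$ has real codimension one in $\Stab^0(X)$ while, generically, the two-sublattice alignment condition would impose a further real constraint.
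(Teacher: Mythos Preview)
Your argument is correct and matches what the paper intends: the paper does not give a detailed proof but only remarks that the proposition is ``essentiellement formelle \'etant donn\'ee la finitude locale des murs''. You have supplied those details --- local finiteness confines the relevant wall-crossing to $W$, the $\sigma_-$-HN factors of $E$ become $\sigma_0$-semistable of the same phase as $v$, and the genericity of $\sigma_0$ on $W$ (avoiding the countably many numerical alignment loci for rank-two sublattices $H' \neq H_W$ through $v$) forces their Mukai vectors into $H_W$.

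One simplification worth noting: the sequence $\sigma_-^{(k)} \to \sigma_0$ is unnecessary. By definition no wall for $v$ meets the open chamber $C_-$, so the $\sigma_-$-HN filtration of $E$ is \emph{constant} as $\sigma_-$ ranges over $C_-$ near $\sigma_0$. You may therefore fix $\sigma_-$ once, take any HN factor $A$, and pass directly to the limit to obtain that $A$ is $\sigma_0$-semistable of phase $\phi_{\sigma_0}(v)$ (proposition~\ref{proposition:topologie-stable}), hence $Z_{\sigma_0}(v(A)) \in \R_+ \cdot Z_{\sigma_0}(v)$; your genericity argument then applies verbatim. This streamlining also makes the uniformity over $E$ transparent: the conclusion depends only on the lattice class $v(A)$ and the choice of generic $\sigma_0 \in W$, so the neighbourhood in which the proposition holds is independent of the particular $\sigma_+$-stable object $E$.
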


Cette proposition permet d'étudier la relation entre $M_{\sigma_\pm}(v)$ via les propriétés du réseau $H_W$. Bayer-Macr\`i parviennent à déduire de cela une classification détaillée des murs dans $\Stab_0(X)$ que nous ne pouvons reproduire ici.

Le cas le plus difficile est celui où le mur $W$ est \emph{totalement semistable}, i.e. où il n'existe pas d'objets $\sigma_0$-stables. Dans ce cas, il est possible qu'aucun objet $\sigma_+$-semistable soit $\sigma_-$-semistable. Donnons un exemple des méthodes utilisées pour contrôler ce phénomène.

\bigskip

On reprend les notations précédentes. On va montrer comment les propriétés de $H_W$ permettent de contrôler la géométrie du wall-crossing. Nous ne traiterons qu'un cas particulier.

Rappelons que dans \ref{subsection:proj}, nous avons construit une contraction birationnelle 
$$\pi_+ : M_{\sigma_+}(v)\ra M_+$$
qui contracte exactement les courbes paramétrant les objets $\sigma_+$ stables dont les facteurs de Jordan-Hölder pour $\sigma_0$ sont génériquement constants.

Par ailleurs, supposons pour fixer les idées que $M_{\sigma_+}(v)$ est un espace de modules fin, et soit $\cE$ une famille universelle. On considère des filtrations de Harder-Narasimhan pour~$\sigma_-$. On peut montrer qu'elles existent en famille pour $\cE$. Soient $a_1, \ldots, a_m$ les vecteurs de Mukai des facteurs de Harder-Narasimhan de $\cE$ pour $\sigma_-$ en un point générique de $M_{\sigma_+}(v)$. Prenant les facteurs de la filtration de Harder-Narasimhan relative, on obtient une application rationnelle
$$\mathrm{HN} : M_{\sigma_+}(v)\dashrightarrow M_{\sigma_-}(a_1)\times\cdots\times M_{\sigma_-}(a_m).$$

La proposition \ref{prop:HN-reseau} montre que les $a_i$ sont des éléments de $H_W$. On a de plus $v=a_1+\cdots+a_m$, et $a_i^2\geq -2$ par la proposition \ref{proposition:stabilite-intersection}.

Soit $C$ une courbe dans $M_{\sigma_+}(v)$ contractée par $\mathrm{HN}$. Alors les facteurs de Harder-Narasimhan $E_i$ pour $\sigma_-$ des objets paramétrés par deux points génériques de $C$ sont les mêmes, ce qui implique que les facteurs de Jordan-Hölder de tels objets, pour $\sigma_0$, sont les mêmes, puisque ce sont des facteurs de Jordan-Hölder d'objets $E_i$, qui sont en nombre fini. On a donc montré :

\begin{prop}
Toute courbe contractée par $\mathrm{HN}$ est contractée par $\pi_+$. En particulier, 
$$v^2+2=\dim M_+\geq \dim (M_{\sigma_-}(a_1)\times\cdots\times M_{\sigma_-}(a_m))=\sum_{i=1}^m (a_i^2+2).$$
\end{prop}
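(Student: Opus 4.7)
La première assertion découle essentiellement de l'argument donné dans le paragraphe précédant immédiatement l'énoncé. Soit $C \subset M_{\sigma_+}(v)$ une courbe contractée par $\mathrm{HN}$. Les facteurs de Harder-Narasimhan pour $\sigma_-$ des objets paramétrés par deux points généraux de $C$ coïncident; notons-les $E_1, \ldots, E_m$. Chaque $E_i$ est $\sigma_-$-semistable et, par fermeture de la semistabilité (proposition~\ref{proposition:topologie-stable}) et parce que $\sigma_0$ est sur le mur bordant la chambre de $\sigma_-$, reste $\sigma_0$-semistable. Les facteurs de Jordan-H\"older pour $\sigma_0$ d'un objet $E_p$ paramétré par un point général $p \in C$ sont alors la réunion, avec multiplicités, de ceux des $E_i$; ils sont donc constants le long de $C$. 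La caractérisation de $\pi_+$ rappelée en~\ref{subsection:proj} entraîne que $\pi_+$ contracte $C$.

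Pour l'inégalité de dimension, on cherche à montrer la dominance de l'application rationnelle $\mathrm{HN}$; une fois celle-ci établie, on conclut par
$$\sum_{i=1}^{m} (a_i^2+2) = \dim \prod_{i=1}^{m} M_{\sigma_-}(a_i) = \dim \mathrm{Im}(\mathrm{HN}) \le \dim M_{\sigma_+}(v) = v^2+2 = \dim M_+,$$
la dernière égalité provenant du fait que $\pi_+$ est birationnel. Pour la dominance, on fixe un $m$-uplet général $(F_1, \ldots, F_m)$ dans le produit, avec phases strictement décroissantes pour $\sigma_-$, et l'on construit une extension itérée $E$ des $F_i$ au moyen de la non-nullité des $\Ext^1(F_j, F_i)$ pour $i<j$ (garantie par la dualité de Serre sur la surface $\mathrm{K}3$ $X$, et par la généricité des $a_i$, qui assure l'existence d'au moins un objet de $M_{\sigma_+}(v)$ dont la filtration de Harder-Narasimhan a pour classes $(a_1, \ldots, a_m)$). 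Il reste à vérifier qu'un choix général de classe d'extension produit un $E$ qui est $\sigma_+$-stable de classe $v$ et dont la filtration de Harder-Narasimhan pour $\sigma_-$ est précisément celle prescrite.

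Le principal obstacle technique est la $\sigma_+$-stabilité d'une extension générique. La proposition~\ref{prop:HN-reseau} contraint tout sous-objet $\sigma_+$-déstabilisant de $E$ à avoir vecteur de Mukai dans le réseau hyperbolique de rang deux $H_W$; en prenant $\sigma_+$ suffisamment proche de $\sigma_0$ dans sa chambre --- ce que permet la finitude locale des murs (proposition~\ref{proposition:murs-chambres}) --- et en appliquant un argument de position générale dans l'espace paramétrant les extensions pour exclure les (en nombre fini) classes de $H_W$ pertinentes comme vecteurs de Mukai de sous-objets réels d'un $E$ générique, on obtient la $\sigma_+$-stabilité, d'où la dominance de $\mathrm{HN}$ et l'inégalité annoncée.
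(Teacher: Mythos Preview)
Your proof of the first assertion is correct and coincides with the paper's: the argument you give is exactly the paragraph immediately preceding the proposition in the text.

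For the dimension inequality, your approach diverges from the paper's. The phrase ``En particulier'' signals that the inequality is meant to be an immediate consequence of the first assertion, via a short fibre-dimension argument: since every curve contracted by $\mathrm{HN}$ lies in a fibre of $\pi_+$, the connected components of the fibres of $\mathrm{HN}$ are contained in fibres of $\pi_+$; as $\pi_+$ is birational, this controls the generic fibre of $\mathrm{HN}$ and gives the dimension comparison directly. You instead attempt to prove dominance of $\mathrm{HN}$ by constructing, for a \emph{general} tuple $(F_1,\ldots,F_m)$, a $\sigma_+$-stable iterated extension with the prescribed $\sigma_-$-Harder--Narasimhan filtration. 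Notice that your argument for the inequality never invokes the curve-contraction statement you just proved; that alone should make you suspicious of the route.

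More seriously, the dominance argument has genuine gaps. First, the non-vanishing of $\Ext^1(F_j,F_i)$ for a \emph{general} tuple is not justified: the existence of one $E\in M_{\sigma_+}(v)$ whose HN factors have classes $a_1,\ldots,a_m$ only gives $\ext^1>0$ for that particular tuple, and since $\ext^1$ is upper semicontinuous it may well drop to zero generically. Serre duality gives $\ext^1(F_j,F_i)=\ext^1(F_i,F_j)$ but does not force positivity; the numerical quantity $(a_i,a_j)$ equals $\ext^1-\hom(F_i,F_j)-\hom(F_j,F_i)$, so positivity of $\ext^1$ is essentially equivalent to the very inequality you are trying to prove. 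Second, the $\sigma_+$-stability of a generic extension is only sketched: restricting possible destabilizing Mukai vectors to the rank-two lattice $H_W$ is correct, but ``excluding finitely many classes by general position in the extension space'' is not yet an argument --- for each such class you must show that the locus of extensions admitting a subobject of that class is a \emph{proper} closed subset, which requires an actual dimension estimate you have not supplied.
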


L'égalité $m=1$ est équivalente au fait qu'un objet générique $\sigma_+$-stable de vecteur $v$ est $\sigma_-$-stable. En particulier, $M_{\sigma_+}(v)$ ont un ouvert en commun, donc sont birationnels, ce qui est essentiellement ce que l'on souhaitait montrer.

La suite de la démonstration consiste en l'exploitation de l'inégalité
$$v^2\geq 2(m-1)+\sum_{i=1}^m a_i^2.$$
Il s'agit de classifier précisément les cas qui permettent d'avoir $m=1$. Nous renvoyons à l'article \cite{BayerMacri14} pour une analyse détaillée. On montre que les cas $m>1$ correspondent toujours à l'apparition de certains vecteurs isotropes dans $H_W$ ou de racines de $\Lambda$.

La géométrie correspondant à l'existence de ces classes peut être décrite explicitement. Dans le cas de l'existence de classes isotropes, on montre que $\pi_+$ est essentiellement le morphisme de contractions de l'espace de modules de faisceaux stables au sens de Gieseker vers un espace de modules de faisceaux stables au sens de la pente.

Le second cas est plus difficile. On montre que les racines de $\Lambda$ apparaissant dans $H_W$ et contrôlant la géométrie du morphisme $\mathrm{HN}$ ci-dessus induisent des automorphismes de $D^b(X)$ --- les twists sphériques. Composant avec ces automorphismes, on peut se ramener au cas où $m=1$.

\subsection{Applications}

Les théorèmes précédents permettent de contrôler précisément les bords des cônes nef et mobiles des espaces $M_\sigma(v)$. Des arguments de densité permettent d'étendre ces résultats aux variétés hyperkähleriennes déformations de $M_\sigma(v)$ \cite{BHT15, Mongardi15}. On renvoie à \cite{HassettTschinkel16} pour un survol de ce cercle d'idées. Notons par ailleurs que des arguments de théorie ergodique permettent à Amerik et Verbitsky \cite{AmerikVerbitsky17} d'étudier des problèmes semblables pour des variétés symplectiques holomorphes arbitraires.

Donnons brièvement une application du corollaire \ref{corollaire:wall-crossing-bir}.

\begin{theo}[\cite{BayerMacri14}, Theorem 1.5]
Soient $X$ une surface $\mathrm{K}3$, $v$ un vecteur de Mukai primitif tel que $v^2=2n-2\geq 0$, et $\sigma$ une condition de stabilité générique sur $X$. Soit $L$ un fibré en droites non trivial sur $M_\sigma(v)$ tel que $q(L)=0$, où $q$ est la forme de Beauville-Bogomolov. Alors il existe une variété symplectique holomorphe $M$, birationnelle à $M_\sigma(v)$, qui admet une fibration $M\ra\mathbb P^n$ à fibres lagrangiennes.
\end{theo}

Markushevich \cite{Markushevich06} et Sawon \cite{Sawon07} avaient auparavant obtenu ce résultat pour certains schémas de Hilbert sur les surfaces $\mathrm{K}3.$ Un résultat similaire a été démontré dans \cite{Markman14}, et il est étendu dans \cite{Matsushita17} à toutes les déformations des $M_\sigma(v)$.

Un cas particulier du théorème est celui où $M_\sigma(v)$ est remplacé par un espace de modules de faisceaux stables au sens de Gieseker. De manière encore plus spéciale, on peut examiner le cas où $M_\sigma(v)$ est un espace de modules de faisceaux stables de rang zéro --- on vérifie alors immédiatement que l'hypothèse du théorème est automatiquement satisfaite.

Dans ce cas, étudié en détail dans \cite{Beauville91}, les points de $M_\sigma(v)$ paramètrent essentiellement la donnée d'une courbe $C$ dans $X$ dans un système linéaire donné, et d'un fibré en droites sur $X$ de degré fixé sur $C$. La projection sur le système linéaire fournit la fibration lagrangienne désirée. 

La preuve du théorème se ramène au cas précédent par transformées de Fourier-Mukai, puis déformation des conditions de stabilité pour déformer les espaces de modules en espaces de modules de faisceaux stables à la Gieseker. 

Plus précisément, on interprète la classe de $L$ dans $\NS(X)$ comme un vecteur de Mukai isotrope, qui fournit donc une surface $\mathrm{K}3$, $Y$, espace de modules d'objets sur~$X$. $M_\sigma(v)$ est alors un espace de modules d'objets sur $Y$, pour une certaine condition de stabilité, et un vecteur de Mukai de rang $0$. Déformant la condition de stabilité, et utilisant le corollaire \ref{corollaire:wall-crossing-bir} qui garantit que tous les espaces de modules en jeu sont birationnels les uns aux autres, on se ramène au cas de \cite{Beauville91}.

\newcommand{\etalchar}[1]{$^{#1}$}

\end{document}